\documentclass[a4paper,9pt]{amsart}
\usepackage{amstext, amsfonts,amsthm, amsmath, amssymb, amscd}
\usepackage[utf8]{inputenc}
\usepackage{calc}
\usepackage[colorlinks,linkcolor=blue,hyperindex]{hyperref}
\usepackage[justification=centering]{caption}
\usepackage{gensymb}
\usepackage{xfrac}  
\usepackage{xcolor}
\usepackage{multicol}
\usepackage{graphicx}
\usepackage{subfigure}
\usepackage{indentfirst}
\usepackage{caption} 
\usepackage{enumerate}
\usepackage[english]{babel}
\usepackage{fancyhdr}
\usepackage{refcount}
\usepackage{fullpage}
\usepackage{mathtools}
\usepackage{xparse} 
\usepackage{enumitem}
\usepackage{tabularx}
\usepackage{booktabs}
\usepackage{placeins}
\usepackage{tikz-cd}
\usepackage{todonotes}

\renewenvironment{abstract}
{\quotation\small\noindent\rule{\linewidth}{.5pt}\par\smallskip
	{\centering\bfseries\abstractname\par}\medskip}
{\par\noindent\rule{\linewidth}{.5pt}\endquotation}

\newtheorem{The}{Theorem}[section]
\newtheorem{Cor}[The]{Corollary}
\newtheorem{Def}[The]{Definition}
\newtheorem{Lem}[The]{Lemma}
\newtheorem{Rem}[The]{Remark}
\newtheorem{Exam}[The]{Example}
\newtheorem{Prop}[The]{Proposition}

\newtheorem*{Not}{Notation}

\newcommand{\re}{\ensuremath{\mathbb{R}}}
\newcommand{\co}{\ensuremath{\mathbb{C}}}
\newcommand{\fd}{\ensuremath{\mathbb{F}}}
\newcommand{\ha}{\ensuremath{\mathbb{H}}}
\newcommand{\bc}{\ensuremath{\mathbb{BC}}}
 
\newcommand{\nm}{\ensuremath{\Vert}} 
\newcommand{\nmr}{\ensuremath{\right\Vert}} 
\newcommand{\nml}{\ensuremath{\left\Vert}}  
\newcommand{\ebf}{\ensuremath{\textbf{e}}} 
\newcommand{\ebft}{\ensuremath{\textbf{e}^{\dagger}}}
\newcommand{\iu}{\mathrm{\bf i}\mkern1mu}
\newcommand{\ju}{\mathrm{\bf j}\mkern1mu}
\newcommand{\ku}{\mathrm{\bf k}\mkern1mu}

\DeclareMathOperator{\gl}{\ensuremath{GL}} 
\DeclareMathOperator{\rk}{\ensuremath{rank}} 
\DeclareMathOperator{\mn}{\ensuremath{M}} 
\DeclareMathOperator{\dete}{\ensuremath{det}} 
\DeclareMathOperator{\idt}{\ensuremath{Id}} 

\DeclareMathOperator{\ci}{\ensuremath{\mathbb{S}^{1}}}
\DeclareMathOperator{\cc}{\ensuremath{\mathbb{S}^{1}_{\co}}} 
\DeclareMathOperator{\tr}{\ensuremath{\mathbb{T}^{2}}} 
\DeclareMathOperator{\zd}{\ensuremath{\mathfrak{S}}}




\NewDocumentCommand{\mybar}{ O{0.7} O{2pt} m }{
    \mathrlap{\hspace{#2}\overline{\scalebox{#1}[1]{\phantom{\ensuremath{#3}}}}}\ensuremath{#3}
}
\newcommand*\cjt[1]{\widetilde{#1}}
\newcommand*\cjh[1]{\widehat{#1}}
\newcommand*\cj[1]{\mybar{#1}}
\renewcommand{\Re}{\operatorname{Re}}
\renewcommand{\Im}{\operatorname{Im}}

\makeatletter
\def\blfootnote{\gdef\@thefnmark{}\@footnotetext}
\makeatother

\title{Bicomplex Polar Weighted Homogeneous Polynomials}
\author{Yesenia Bravo}
\address{Instituto de Matemáticas, Unidad Cuernavaca, Universidad Nacional Autónoma de México, Avenida Universidad s/n, Colonia Lomas de Chamilpa, CP62210, Cuernavaca, Morelos Mexico}
\email{ybravoo.math@gmail.com}
\email{agustin.romano@im.unam.mx}
\author{Inácio Rabelo}
\address{Instituto de Ciências Matemáticas e de Computação, Av. Trabalhador São-Carlense 400, Centro. Caixa Postal: 668 CEP 13560-970, São Carlos SP, Brasil}
\email{rabeloinacio@usp.br}
\author{Agustín Romano-Velázquez}

\numberwithin{equation}{section}

\begin{document}

\blfootnote{\textup{2020} \textit{Mathematics Subject Classification.} Primary: 32S55, 30G35; Secondary: 32C18, 14B05}

\maketitle

\begin{abstract}
   We study the topology of real polynomial maps $\re^{4n} \longrightarrow \re^{4}$ expressed in terms of bicomplex variables and their conjugates, which we refer to as bicomplex mixed polynomials. We introduce the notion of polar weighted homogeneity, a property that generalizes the concept of weighted homogeneity in the complex setting. This leads to the existence of global and spherical Milnor fibrations. Moreover, we include a discussion on bicomplex vector calculus, a bicomplex holomorphic analogue of the Milnor fibration theorem, and a theorem of Join type that describes the homotopy type of the fibers of certain polynomials on separable variables. This extends previous works on mixed polynomials in complex variables and their conjugates.
\end{abstract}

\section*{Introduction}

The bicomplex numbers, denoted by $\bc$, form a 4-dimensional commutative real algebra with zero divisors that generalizes the field of complex numbers. This algebra inherits a complex structure and possesses several properties that, in some contexts, make it more advantageous than the quaternions. Its foundational aspects and the theory of bicomplex holomorphic functions were first studied by the Italian school of Segre in a series of papers beginning with \cite{Segre1892}. Other fundamental contributions are found in \cite{Riley1953, Ryan1982}, and in the book \cite{Price1991}. More recent interest in bicomplex structures and their applications is reflected in works such as \cite{Colombo2014, Colombo2011, Resendis2023, Rochon2004a, Rochon2024b}, and \cite{Bravo2023}, to name a few. A modern treatment is provided in the book \cite{Luna2015}.

On the other hand, Milnor's fibration theorem, introduced in \cite{Milnor1968}, is a landmark result in Singularity Theory that describes the topology of complex varieties near their critical points. Analogous statements for real analytic maps hold under stronger conditions. The first examples, given in \cite{Ruas2002}, of real analytic maps that satisfy an analogue of Milnor's fibration theorem were real polynomial maps $\co^{n} \longrightarrow \co$ written on complex variables and their conjugates, now known as \textit{mixed polynomials}. These objects generalize their complex counterpart and play a central role in investigating the topology of singularities. For more details, see \cite{Oka2021} and the references therein. 

In \cite{Cisneros2008}, a notion of polar weighted homogeneity, related to a $\co^{*}$-action on $\co^{n}$, was introduced for mixed polynomials based on the works \cite{Seade1997} and \cite{Ruas2002}. This leads to the existence of a global fibration and a Milnor fibration on the sphere, both defined on the complements of the zero set. Moreover, Join type theorem was proved following \cite{Oka1973}. This result describes the homotopy type of the fiber of polar weighted homogeneous polynomials that decompose as a sum of polynomials on separable variables. 

The main goal of this work is to study the topology of real polynomial maps $\bc^{n} \longrightarrow \bc$, regarded as \textit{bicomplex mixed polynomials}. For this, we generalize the results of complex and real analytic maps that we discussed previously. Using the trigonometric representation in bicomplex algebra, we define a $\bc^{*}$-action on $\bc^{n}$, where $\bc^{*}$ denotes the non-zero divisors, and introduce a similar notion of polar weighted homogeneity. As a consequence, we derive the existence of fibrations results as before on the complement of the preimage of zero divisors. In the spherical case, we obtain two fibrations, whose base spaces are an open connected subset of $\mathbb{S}^{3}$ and a complex quadric. Furthermore, a Join type theorem follows with minor modifications in the original proof.

We include a discussion on bicomplex vector calculus. While the definitions resemble those in the complex case, but it can be developed independently, as we shall see. In addition, we discuss a Milnor-type fibration theorem for bicomplex holomorphic maps and explain how the classical theory applies to the underlying real structures. The key idea of our work is that the language of bicomplex variables provides a natural and powerful framework for studying real polynomial maps $\mathbb{R}^{4n} \to \mathbb{R}^4$. Although real algebras have appeared before in Singularity Theory, their role in the context of fibrations has only recently been considered, as discussed in \cite{Massey2008}.

\section{Bicomplex algebra}
We refer the reader to \cite{Luna2015} for more details about the results of this section. The set of bicomplex numbers is defined by
\begin{align*}
    \bc = \{ \lambda_{1} + \ju \lambda_{2} : \lambda_{1}, \lambda_{2} \in \co(\iu)\},
\end{align*}
where $\co(\iu) = \{ a + \iu b: a,b \in \re\}$ denotes the usual complex numbers and $\ju$ is a second imaginary unit satisfying $\iu \neq \ju$ and $\iu\ju = \ju\iu$. From now on, by $\co$ we mean the set $\co(\iu)$. A \textit{bicomplex number} $Z = \lambda_{1} +\ju \lambda_{2} \in \bc$ can be identified with the pair $(\lambda_{1}, \lambda_{2})$ where $\lambda_{1}$ and $\lambda_{2}$ are complex numbers. Thus, $\bc \simeq \co^{2} \simeq \re^{4}\simeq \ha$ as real vector spaces, where $\ha$ denotes the quaternions. Therefore, $\bc$ is a 4-dimensional commutative real algebra and a basis is given by $\langle 1, \iu, \ju, \ku \rangle$, where $\ku = \iu \ju$. Beyond commutativity, the main difference with quaternions is the existence of zero divisors. Namely, define the elements
\begin{align*}
    \ebf = \frac{1 + \iu\ju}{2}, \;\;\; \ebft = \frac{1 - \iu\ju}{2}.
\end{align*}
The numbers $\ebf$ and $\ebft$ have the following properties:
\begin{align*}
    &\ebf \cdot \ebft = \ebft\cdot\ebf = 0; \\
    &\ebf^{2} = \ebf, \;\;\left(\ebft\right)^{2} = \ebft; \\
    &\ebf + \ebft = 1, \;\; \ebf - \ebft = \iu\ju.
\end{align*}
The set $\langle \ebf, \ebft \rangle$ is a complex basis for $\bc$. Thus, every $Z = \lambda_{1} + \ju \lambda_{2} \in \bc$ can be uniquely written as:
\begin{align*}
    Z = z_{1}\ebf + z_{2}\ebft,
\end{align*}
where $z_{1} = \lambda_{1}-\iu \lambda_{2}$ and $z_{2} = \lambda_{1} + \iu \lambda_{2}$. This is called the \textit{idempotent representation} of $Z$. The elementary operations in $\bc$ can be easily performed component by component on this basis. It is important to remark that the existence of the elements $\ebf$ and $\ebft$ with the above properties is quite special and has no analogue in $\co$. Moreover, the set of invertible elements is described by
\begin{align*}
    \bc^{*} := \left\{ Z \in \bc : Z = z_{1}\ebf + z_{2}\ebft, \;z_{1},z_{2} \neq 0\right\}.
\end{align*}
We denote its complement, the set of zero divisors, by $\zd$. It follows that $\zd$ is the union of two planes in $\bc \simeq \re^{4}$.

Another difference with the complex numbers is that the bicomplex numbers admit three types of conjugations. Let $Z = \lambda_{1} + \ju \lambda_{2}$ and write its idempotent representation as $z_{1}\ebf + z_{2}\ebft$. We denote by:
\begin{align}
    \cjt{Z} &= \bar{\lambda}_{1} - \ju\bar{\lambda}_{2} = (\bar{\lambda}_{1}+\iu\bar{\lambda}_{2})\ebf + (\bar{\lambda}_{1}-\iu\bar{\lambda}_{2}) \ebft.    \label{cjt}\\
    \cjh{Z} &= \lambda_{1} - \ju \lambda_{2} = (\lambda_{1}+\iu \lambda_{2})\ebf + (\lambda_{1}-\iu \lambda_{2})\ebft. \label{cjh} \\
    \cj{Z} &= \bar{\lambda}_{1} +\ju\bar{\lambda}_{2} = (\bar{\lambda}_{1}-\iu\bar{\lambda}_{2})\ebf + (\bar{\lambda}_{1}+\iu\bar{\lambda}_{2})\ebft. \label{cj}
\end{align}
In addition, these conjugations satisfy the following relations:
\begin{equation}\label{eqc2}
    \begin{aligned}
        \lambda_{1} = \frac{Z + \cjh{Z}}{2}, \;\;\; \lambda_{2} = \frac{Z- \cjh{Z}}{2\ju}, \\
        \bar{\lambda}_{1} = \frac{\cjt{Z}+ \cj{Z}}{2}, \;\;\; \bar{\lambda}_{2} = \frac{\cjt{Z}+ \cj{Z}}{2\ju}.
    \end{aligned}
\end{equation}
It should be noted that expressing an arbitrary vector in $\re^{4}$ using bicomplex variables requires all three conjugations.

\subsection{Trigonometric form}
The existence of the three conjugations produces that any non-zero-divisor of $\bc$ has two polar (or trigonometric) representation. Moreover, we have the usual Euclidean norm and a complex-valued norm: 
\begin{align*}
 \nm Z \nm&= \nm \lambda_{1} + \ju \lambda_{2}\nm := \sqrt{ |\lambda_{1}|^{2} + |\lambda_{2}|^{2}},\\
    \nm Z \nm_{\iu} &= \nm \lambda_{1} + \ju \lambda_{2}\nm_{\iu} := \sqrt{ \lambda_{1}^{2} + \lambda_{2}^{2}},
\end{align*}
with the following convention: if $\lambda_{1}^{2}+\lambda_{2}^{2}$ is a non-negative real number, the square root is the usual, otherwise, we take the solution in which the imaginary part is positive. The set of complex numbers with positive imaginary parts is referred in the literature by \textit{upper half-plane} and we shall denote its union with the positive horizontal axis by $\mathcal{H}^{+}$.A straightforward computation shows that $Z \in \bc^{*}$ if and only if $\nm Z \nm_{\iu} \in \co^{*}$. This motivates us to consider the following set, called the \textit{complex unit circle}:
\begin{align*}
    \cc = \{ (z,w) \in \co^{2} : z^{2} + w^{2} = 1\}.
\end{align*}
If $Z = \lambda_{1} + \ju \lambda_{2} \in \bc^{*}$, then $\nm Z \nm_{\iu} \neq 0$ and we may write
\begin{align*}
    \left(\frac{\lambda_{1}}{\nm Z \nm_{\iu}}\right)^{2} + \left(\frac{\lambda_{2}}{\nm Z \nm_{\iu}}\right)^{2} = 1.
\end{align*}
We set
\begin{align*}
    \cos \Theta = \frac{\lambda_{1}}{\nm Z \nm_{\iu}}, \;\;\; \sin \Theta = \frac{\lambda_{2}}{\nm Z \nm_{\iu}},
\end{align*}
for some complex angle $\Theta$. By the periodicity of the complex sine and cosine, there are infinitely many solutions for the system above, and we shall call any of them the \textit{complex argument} of $Z$. Restricting $\Re(\Theta) \in [0,2\pi)$, this solution is called the \textit{principal value} and denoted by $\arg_{\iu}Z$. Thus, we may write
\begin{align*}
    Z = \nm Z \nm_{\iu} e^{\ju\arg_{\iu}Z}, \quad e^{\ju\arg_{\iu}Z} \in \cc.
\end{align*}
This yields a polar coordinate system $(\omega, \Theta)$ for $\bc^{*}$ given by
\begin{align*}
    \mathcal{H}^{+} \times \cc &\to \bc^{*}\\
    (\omega, \Theta) &\mapsto \omega \left( \cos{\Theta}+\ju\sin{\Theta}\right)
\end{align*}
Associated with this representation, we have the projection $\pi_{\iu} : \bc^{*} \longrightarrow \cc$ given by
\begin{align}\label{prj1}
    \pi_{\iu}(Z) = \frac{Z}{\nm Z \nm_{\iu}}.
\end{align}
This map admits an interesting geometric interpretation. Let us denote $\mathbb{S}^{3}_{0} := \mathbb{S}^{3} \setminus \zd$ the subset of the usual Euclidean sphere consisting of non-zero divisors. Notice that
\begin{align*}
    \mathbb{S}^{3}_{0} = \{(\omega, \Theta) \in \bc^{*} : \nm \omega \Theta \nm = 1\}. 
\end{align*}
\begin{Prop}
    The projection $\pi_{\iu} : \mathbb{S}^{3}_{0} \longrightarrow \cc$ is equivalent to a restriction of the Hopf fibration of $\mathbb{S}^{3}$.
\end{Prop}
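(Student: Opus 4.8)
The plan is to pass to the idempotent representation $Z = z_{1}\ebf + z_{2}\ebft$, in which both norms and the circle action underlying the Hopf map become diagonal. Using $z_{1} = \lambda_{1}-\iu\lambda_{2}$ and $z_{2} = \lambda_{1}+\iu\lambda_{2}$, a direct computation gives $\nm Z\nm^{2} = \tfrac{1}{2}\bigl(|z_{1}|^{2}+|z_{2}|^{2}\bigr)$ and $\nm Z\nm_{\iu}^{2} = z_{1}z_{2}$. Hence $\mathbb{S}^{3} = \{\,|z_{1}|^{2}+|z_{2}|^{2}=2\,\}$ is a round sphere in the $(z_{1},z_{2})$-coordinates, the zero divisors $\zd = \{z_{1}=0\}\cup\{z_{2}=0\}$ meet it in two disjoint circles $C_{1}=\mathbb{S}^{3}\cap\{z_{1}=0\}$ and $C_{2}=\mathbb{S}^{3}\cap\{z_{2}=0\}$, and $\mathbb{S}^{3}_{0}=\mathbb{S}^{3}\setminus(C_{1}\cup C_{2})$. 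In the same coordinates $\cc=\{z_{1}z_{2}=1\}$, which I would identify with $\co^{*}$ via $(w_{1},w_{2})\mapsto w_{1}$.

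First I would record the Hopf fibration in these coordinates as $(z_{1},z_{2})\mapsto[z_{1}:z_{2}]$, whose fibers are the orbits of the diagonal action $e^{\iu t}\cdot(z_{1},z_{2})=(e^{\iu t}z_{1},e^{\iu t}z_{2})$; in $\bc$ this is nothing but multiplication by the unit complex numbers $e^{\iu t}\in\co^{*}$. The removed circles $C_{1},C_{2}$ are exactly the Hopf fibers over $[0:1]$ and $[1:0]$, so the Hopf map restricts to a circle bundle $\mathbb{S}^{3}_{0}\to\mathbb{CP}^{1}\setminus\{[1:0],[0:1]\}\cong\co^{*}$. On the other hand, $\nm Z\nm_{\iu}^{2}=z_{1}z_{2}$ turns the projection into $\pi_{\iu}(z_{1},z_{2}) = \bigl(z_{1}/\sqrt{z_{1}z_{2}}\bigr)\ebf + \bigl(z_{2}/\sqrt{z_{1}z_{2}}\bigr)\ebft$, so under $\cc\cong\co^{*}$ it reads $(z_{1},z_{2})\mapsto z_{1}/\sqrt{z_{1}z_{2}}$, whose square is $z_{1}/z_{2}$, i.e. the Hopf map in the affine chart.

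The core of the argument is to upgrade this identity to a statement about fibrations. I would check that $\pi_{\iu}$ is constant on the orbits of the diagonal $\ci$-action up to the square-root branch, so that these orbits --- the Hopf fibers --- are its level sets; concretely this produces a commutative triangle on $\mathbb{S}^{3}_{0}$, $\cc$ and $\mathbb{CP}^{1}\setminus\{[1:0],[0:1]\}$ whose edge $\pi_{\iu}$ is followed by the $2$-to-$1$ cover $w_{1}\mapsto w_{1}^{2}$ to give the restricted Hopf map. This presents $\pi_{\iu}$ as the restricted Hopf fibration read through the double cover $\cc\to\mathbb{CP}^{1}\setminus\{[1:0],[0:1]\}$. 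As a consistency check, both descriptions give $\mathbb{S}^{3}_{0}\cong\mathbb{T}^{2}\times\re$, and since oriented circle bundles over $\co^{*}\simeq\ci$ are trivial, the restricted Hopf bundle is the product bundle, which makes the equivalence explicit.

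The step I expect to be the real obstacle is the square-root convention itself. With the $\mathcal{H}^{+}$-valued branch, $\sqrt{z_{1}z_{2}}$ jumps by a sign across $\{z_{1}z_{2}\in\re_{>0}\}$, so $\pi_{\iu}$ is only well defined up to the antipodal identification $P\sim -P$ on $\cc$. The delicate point is therefore to verify that this sign ambiguity is precisely the nontrivial deck transformation of $\cc\to\mathbb{CP}^{1}\setminus\{[1:0],[0:1]\}$, so that the globally continuous map underlying $\pi_{\iu}$ is exactly the Hopf fibration; the remainder is the diagonalizing bookkeeping of the first two paragraphs.
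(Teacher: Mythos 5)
Your proposal is correct, but it follows a genuinely different route from the paper. The paper works in the polar coordinates $(\omega,\Theta)\in\mathcal{H}^{+}\times\cc$, observes that the fibres $\pi_{\iu}^{-1}(\Theta_{0})$ are arcs (copies of $\mathbb{S}^{1}\cap\mathcal{H}^{+}$) partitioning $\mathbb{S}^{3}_{0}$, and then concludes by invoking the theorem of Gluck on great-circle fibrations of $\mathbb{S}^{3}$. You instead diagonalize everything in idempotent coordinates and exhibit the exact algebraic relation $q\circ\pi_{\iu}=h|_{\mathbb{S}^{3}_{0}}$, where $h(z_{1},z_{2})=z_{1}/z_{2}$ is the Hopf map in the affine chart and $q:\cc\to\co^{*}$, $w_{1}\mapsto w_{1}^{2}$, is the connected double cover of the twice-punctured $\mathbb{CP}^{1}$. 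Note that the ``delicate point'' you isolate at the end is already settled by your second paragraph: the identity $(z_{1}/\sqrt{z_{1}z_{2}})^{2}=z_{1}/z_{2}$ kills the branch ambiguity because squaring is precisely the quotient by the deck transformation $(w_{1},w_{2})\mapsto(-w_{1},-w_{2})$. What your route buys is precision: it makes explicit that the fibres of $\pi_{\iu}$ are half Hopf circles rather than whole ones, so the asserted equivalence with the restricted Hopf fibration holds only after the $2\!:\!1$ base change $q$, and it surfaces the discontinuity of $\nm\cdot\nm_{\iu}$ across the locus $\{z_{1}z_{2}\in\re_{>0}\}$ --- two genuine subtleties that the paper's appeal to a theorem about fibrations by great \emph{circles} passes over. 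What the paper's route buys is brevity and a conceptual anchor in the classification of circle fibrations of $\mathbb{S}^{3}$. If you want your argument to deliver the statement exactly as worded, add one sentence specifying in which sense a lift of a fibration through a covering of the base is ``equivalent'' to that fibration.
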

    \begin{proof}
        The fibres $\pi_{\iu}^{-1}(\Theta_{0})$ are copies of $\mathbb{S}^{1} \cap \mathcal{H}^{+}$ and are described by
        \begin{align*}
            \pi_{\iu}^{-1}(\Theta_{0}) = \left\{ (\omega, \Theta_{0}) : \nm \omega \nm = \nm e^{-\ju\Theta_{0}}\nm\right\}.
        \end{align*}
        Therefore, every invertible $Z \in \mathbb{S}^{3}_{0}$ belongs to a fibre $\pi_{\iu}^{-1}(\Theta_{0})$. Hence,
        \begin{equation*}
            \mathbb{S}^{3}_{0} =\bigsqcup_{\Theta \in \cc} \pi_{\iu}^{-1}(\Theta_{0}).
        \end{equation*}
        Now, the proof follows from the main theorem in~ \cite{Gluck}.
    \end{proof}

\begin{Rem}
    \normalfont A routine exercise shows that $\cc$ is diffeomorphic to $T\mathbb{S}^{1}$, the tangent bundle of $\mathbb{S}^{1}$, which is equivalent to the 2-sphere minus two points. Then the previous assertion says that the projection $\pi_{\iu}$ is the result of removing two circles corresponding to the intersection $\mathbb{S}^{3} \cap \zd$ and that are mapped to two points in $\mathbb{S}^{2}$.
\end{Rem}

\subsection{Hyperbolic trigonometric form}
Let $Z = z_{1}\ebf + z_{2}\ebft \in \bc^{*}$ where $z_{1} = r_{1}e^{\iu\theta_{1}}$ and $z_{2} = r_{2}e^{\iu\theta_{2}}$. The \textit{hyperbolic polar form} of $Z$ is given by
\begin{align*}
    Z = \left( r_{1}\ebf + r_{2}\ebft\right)\left(e^{\iu\theta_{1}}\ebf + e^{\iu\theta_{2}}\ebft\right).
\end{align*}
The set
\begin{align*}
    \mathbb{D}^{+} = \{ \nu\ebf + \mu\ebf^{\dag} : \nu,\mu \in \re^{+}\}.
\end{align*}
is called the \textit{positive hyperbolic numbers}.
Let $Z = z_{1}\ebf + z_{2}\ebft$ and define
\begin{equation*}
 \nm Z \nm_{\ku} = \nm z_{1} \nm \ebf + \nm z_{2} \nm\ebft.   
\end{equation*}
The set $\mathbb{D}^{+}$ admits a partial order as follows: If $W = w_{1}\ebf + w_{2}\ebft$, we say that $\nm Z \nm_{\ku} \preceq \nm W \nm_{\ku}$ if and only if $\nm z_{1} \nm \le \nm w_{1} \nm$ and $\nm z_{2} \nm \le \nm w_{2} \nm$. The following properties are immediate:
\begin{enumerate}
    \item $\nm Z \nm_{\ku} = 0$ if and only if $Z = 0$.
    \item $\nm Z\cdot W \nm_{\ku} = \nm Z \nm_{\ku} \cdot \nm W \nm_{\ku}$.
    \item $\nm Z + W \nm_{\ku} \preceq \nm Z\nm_{\ku} + \nm W \nm_{\ku}$.
\end{enumerate}
Hence, each $Z \in \bc^{*}$ can be uniquely written as $Z = \nm Z \nm_{\ku}\lambda$, where $\nm Z \nm_{\ku} \in \mathbb{D}^{+}$ and $\lambda \in \ci\ebf \times \ci\ebft$. We shall denote $\tr = \mathbb{S}^{1}\ebf + \mathbb{S}^{1}\ebft$. As for complex numbers, there are infinitely many solutions to the equation $\theta = \theta_{1}\ebf + \theta_{2}\ebft \in \tr$. Then we restrict, $\{\theta_{1}, \theta_{2}\} \in [0,2\pi)$ and such a solution is called the \textit{hyperbolic argument} and denoted by $\arg_{\mathbb{D}}Z$. Associated with this representation, we have the projection $\pi_{\ku} : \bc^{*} \longrightarrow \tr$ given by
\begin{align}\label{prj2}
    \pi_{\ku}(Z) = \frac{z_{1}}{\nm z_{1}\nm}\ebf + \frac{z_{2}}{\nm z_{2}\nm}\ebft,
\end{align}
which is the normalization of each coordinate of the idempotent representation of $Z$.

\subsection{Linear algebra}
As a commutative ring, the theory of bicomplex modules can be developed as usual, and the idempotent representation of these numbers allows us to give a clear description of the $\bc$-linear transformations. First, let $A = (Z_{i,j})_{m\times n}$ be a bicomplex matrix and decompose $Z_{i,j} = z^{1}_{i,j}\ebf + z^{2}_{i,j}\ebf^{\dag}$ for each pair $(i,j)$, where $z^{1}_{i,j}, z^{2}_{i,j} \in \co$. It follows that $A$ has a unique decomposition $A = A^{1}\ebf + A^{2}\ebf^{\dag}$, where $A^{1} = (z^{1}_{i,j})_{m\times n}$ and $A^{2} = (z^{2}_{i,j})_{m\times n}$ are complex matrices. Let us denote the matrices of order $m \times n$ with coefficients in an algebra $\fd$ by $\mn(m\times n,\fd)$. Then the following map defines an embedding $\mn(m\times n, \bc) \hookrightarrow \mn(2m \times 2n, \co)$:
\begin{align}\label{embb}
    A = A^{1}\ebf + A^{2}\ebf^{\dag} \longmapsto \begin{pmatrix}
        A^{1} & 0 \\
        0 & A^{2}
    \end{pmatrix}.
\end{align}
The $\mn(m\times n, \bc)$-action on $\bc^{n}$ is thus described by a $\mn(2m\times 2n, \co)$-action on $\bc^{n} \simeq \co^{n} \times \co^{n}$.
\begin{Def}Let $A = A^{1}\ebf + A_{2}\ebft \in \mn(m\times n, \bc)$ be a bicomplex matrix.
    \begin{enumerate}
        \item The bicomplex rank of $A$, denoted by $\rk(A)$, is the pair $\left( \rk(A^{1}), \rk(A^{2})\right)$, where $\rk (A^{i})$ is the usual complex rank of $A^{i}$, for $i=1,2$.
        \item The determinant of $A$ is defined by
        \begin{align*}
            \dete(A) = \dete(A^{1})\ebf + \dete(A^{2})\ebf^{\dag} \in \bc,
        \end{align*}
        where $\dete(A^{i})$ is the complex determinant of $A^{i}$, for $i=1,2$.
    \end{enumerate}
\end{Def}
Since the sum and product of a bicomplex scalar preserves the idempotent representation, the determinant, as defined above, is multilinear, and $\dete(I_{n}) = 1$, where $I_{n}$ is the identity matrix. By the uniqueness of a function with this property, the function above is the legitimate bicomplex determinant. Moreover, notice that $A \in \mn(n,\bc)$ is invertible if and only if $A^{1}$ and $A^{2}$ are also invertible. This implies that $A$ is invertible if and only if $\dete(A) \notin \zd$. Therefore, the set
\begin{align*}
    \gl(n,\bc) = \{ A \in \mn(n,\bc) : \dete(A) \notin \zd \},
\end{align*}
form a group and the map \eqref{embb} defines an embedding $\gl(n,\bc) \hookrightarrow \gl(2n,\co)$.

\section{Bicomplex vector calculus}
According to the previous subsection, we consider the following hyperbolic norm on $\bc^{n}$: Let $Z \in \bc^{n}$ and write $Z = z_{1}\ebf + z_{2}\ebft$, where $z_{1}, z_{2} \in \co^{n}$. We define
\begin{align*}
    \nm Z \nm_{\ku} := \nm z_{1} \nm\ebf + \nm z_{2} \nm\ebft \in \mathbb{D}^{+}.
\end{align*}
In this section, we introduce the basic theory of bicomplex vector calculus in analogy with the calculus of several variables. The proofs are easy to deduce from these classical results (see, for instance, \cite{Spivak1968} and \cite{Gunning2009}). In this section, $\mathcal{U} = \mathcal{U}_{1}\ebf + \mathcal{U}_{2}\ebft \subset \bc^{n}$ denotes an open subset, where $\mathcal{U}_{1}$ and $ \mathcal{U}_{2}$ are open subsets of $\mathbb{C}^n$.

\begin{Def}\label{drv}
    A bicomplex map $F: \mathcal{U} \subset \bc^{n} \longrightarrow \bc^{m}$ is bicomplex differentiable at the point $Z \in \mathcal{U}$ if there exists a bicomplex linear transformation $L_{Z} : \bc^{n} \longrightarrow \bc^{m}$ such that
    \begin{align}\label{diflimit}
        \lim_{\substack{H \to 0 \\ \nm H \nm_{\ku} \not \in \zd}} \frac{ \nm F(Z+H) - F(Z) - L_{Z}(H)\nm_{\ku}}{\nm H \nm_{\ku}} = 0.
    \end{align}
    The transformation $L_{Z}$ is denoted by $DF_{Z}$ and it is called the bicomplex derivative of $F$ at $Z$. If $F$ is differentiable at all points $Z \in \mathcal{U}$, we say that $F$ is bicomplex holomorphic.
\end{Def}

Notice that $\nm H \nm_{\ku} \to 0$ if and only if $H \to 0$. Thus, the definition above coincides with \cite[Definition 7.2.1]{Luna2015} and \cite[Theorem 3.3]{Resendis2022} for $n = m =1$. We shall see that the hyperbolic norm allows us to apply the classical arguments of the calculus of several variables.

\begin{Prop}
    Let $F: \mathcal{U} \subset \bc^{n} \longrightarrow \bc^{m}$ be a bicomplex differentiable map at $Z \in \mathcal{U}$. Then, the linear transformation $L_{Z}$ is unique.
\end{Prop}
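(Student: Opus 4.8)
The plan is to adapt the classical uniqueness argument for the differential, with the Euclidean norm replaced by the hyperbolic norm $\nm\cdot\nm_{\ku}$. Suppose $F$ admits two bicomplex derivatives $L_{Z}$ and $L_{Z}'$ at $Z$, and set $M = L_{Z} - L_{Z}'$, again a $\bc$-linear transformation $\bc^{n} \to \bc^{m}$. First I would subtract the two instances of \eqref{diflimit}: since $M(H) = \bigl(F(Z+H) - F(Z) - L_{Z}'(H)\bigr) - \bigl(F(Z+H) - F(Z) - L_{Z}(H)\bigr)$, properties (2) and (3) of $\nm\cdot\nm_{\ku}$ give
\begin{align*}
    \frac{\nm M(H)\nm_{\ku}}{\nm H\nm_{\ku}} \preceq \frac{\nm F(Z+H)-F(Z)-L_{Z}'(H)\nm_{\ku}}{\nm H\nm_{\ku}} + \frac{\nm F(Z+H)-F(Z)-L_{Z}(H)\nm_{\ku}}{\nm H\nm_{\ku}}.
\end{align*}
Both summands tend to $0$ as $H \to 0$ with $\nm H\nm_{\ku}\notin\zd$, and because division by a positive hyperbolic number is componentwise division by positive reals it preserves $\preceq$; hence the left-hand side tends to $0$ as well, the convergence being understood componentwise in $\mathbb{D}^{+}$.

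The core of the argument is a homogeneity computation that extracts $M(H)$ from this estimate. I would fix a direction $H_{0}\in\bc^{n}$ with $\nm H_{0}\nm_{\ku}\notin\zd$ and restrict the limit to the real ray $H = tH_{0}$, $t\in\re^{+}$, $t\to 0^{+}$. Since $M$ is $\bc$-linear, $M(tH_{0}) = t\,M(H_{0})$, and $\nm tH_{0}\nm_{\ku} = t\,\nm H_{0}\nm_{\ku}$, so the quotient is constant along the ray:
\begin{align*}
    \frac{\nm M(tH_{0})\nm_{\ku}}{\nm tH_{0}\nm_{\ku}} = \frac{\nm M(H_{0})\nm_{\ku}}{\nm H_{0}\nm_{\ku}}.
\end{align*}
As the limit along this ray is $0$, the constant value must be $0$, and multiplying by $\nm H_{0}\nm_{\ku}\notin\zd$ forces $\nm M(H_{0})\nm_{\ku}=0$, whence $M(H_{0})=0$ by property (1).

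To conclude $M\equiv 0$ it then suffices to evaluate $M$ on a $\bc$-basis lying among the admissible directions. Writing $H_{0}=h_{1}\ebf+h_{2}\ebft$, the condition $\nm H_{0}\nm_{\ku}\notin\zd$ reads $h_{1}\neq 0$ and $h_{2}\neq 0$; in particular each standard basis vector $E_{j}$ of $\bc^{n}$ has $\nm E_{j}\nm_{\ku}=\ebf+\ebft=1\notin\zd$ and is therefore admissible. The previous step gives $M(E_{j})=0$ for every $j$, and $\bc$-linearity yields $M=0$, that is $L_{Z}=L_{Z}'$. The only genuine subtlety, and the step I would handle most carefully, is the $\mathbb{D}^{+}$-valued nature of the limit in \eqref{diflimit}: one must keep track that the quotient is defined only along $\nm H\nm_{\ku}\notin\zd$, that $\preceq$ is respected under the manipulations above, and that the vanishing of a hyperbolic limit means the vanishing of each idempotent component, so that property (1) may legitimately be invoked.
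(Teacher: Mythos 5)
Your argument is correct and follows essentially the same route as the paper's proof: subtract the two defining limits via the triangle inequality for $\nm\cdot\nm_{\ku}$, then exploit homogeneity along a ray to show the difference vanishes on admissible directions. Your final step — evaluating on the standard basis vectors $E_{j}$ (each with $\nm E_{j}\nm_{\ku}=1\notin\zd$) and invoking $\bc$-linearity — is in fact slightly more careful than the paper, which stops at directions $W$ with $\nm W\nm_{\ku}\neq 0$ without explicitly extending to the zero-divisor directions.
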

    \begin{proof}
        Suppose the existence of a second linear transformation $M_{Z}$ with the above properties. One has 
        {\small
        \begin{align*}
            \lim_{\substack{H \to 0 \\ \nm H \nm_{\ku} \not \in \zd}} \frac{ \nm L_{Z}(H) - M_{Z}(H)\nm_{\ku}}{\nm H\nm_{\ku}} &= \lim_{\substack{H \to 0 \\ \nm H \nm_{\ku} \not \in \zd}} \frac{\nm L_{Z}(H) - F(Z+H) + F(H) + F(Z+H) - F(H) - M_{Z}(H)\nm_{\ku}}{\nm H \nm_{\ku}} \\
            &\preceq \lim_{\substack{H \to 0 \\ \nm H \nm_{\ku} \not \in \zd}} \frac{\nm F(Z+H) - F(Z) - L_{Z}(H)\nm_{\ku}}{\nm H \nm_{\ku}} + \lim_{\substack{H \to 0 \\ \nm H \nm_{\ku} \not \in \zd}} \frac{\nm F(Z+H) - F(Z) - M_{Z}(H) \nm_{\ku}}{\nm H \nm_{\ku}} \\
            &= 0.
        \end{align*}}
        Let $W \in \bc^{n}$ and notice that $\nm \lambda W \nm_{\ku} \rightarrow 0$ if $\lambda \rightarrow 0$. Thus, if $\nm W \nm_{\ku} \neq 0$, we have that
        \begin{align*}
            0 = \lim_{\substack{\lambda \to 0 \\ \nm \lambda \nm_{\ku} \notin \zd}}\frac{\nm L_{Z}(\lambda W) - M_{Z}(\lambda W)\nm}{\nm(\lambda W)\nm_{\ku}} = \frac{\nm L_{Z}(W)- M_{Z}(W)\nm_{\ku}}{\nm W \nm_{\ku}},
        \end{align*}
        which implies $L_{Z}(W) = M_{Z}(W)$ for all $W \in \bc^{n}$ such that $\nm W \nm_{\ku} \neq 0$.
    \end{proof}
\begin{Def}
    Let $F : \mathcal{U} \subset \bc^{n} \longrightarrow \bc$ be a bicomplex function. The $i$-th partial derivative of $F$ at a point $W = (W_{1}, \dots, W_{n})$ is the following limit, if it exists,
    \begin{align*}
        \frac{\partial F}{\partial Z_{i}}(W) := \lim_{\substack{H \to 0 \\ H \notin \zd}} \frac{F(W_{1}, \dots, W_{i-1}, W_{i}+H, W_{i+1}, \dots, W_{n}) - F(W)}{\nm H \nm_{\ku}}.
    \end{align*}
\end{Def}
\begin{The}\label{form}
    Let $F = (F^{1}, \dots, F^{m}) : \mathcal{U} \subset \bc^{n} \longrightarrow \bc^{m}$ be a bicomplex map where each coordinate map is also a bicomplex map.
    \begin{enumerate}
        \item The map $F$ is bicomplex holomorphic at $W \in \mathcal{U}$ if and only if $F^{i}$ is bicomplex holomorphic at $W$ for every $i=1,\dots, n$.
        \item If $F$ is bicomplex holomorphic, then $DF_{W} = \left(DF^{1}_{W}, \dots, DF^{m}_{W}\right)$.
        \item If $F$ is bicomplex holomorphic, then the partial derivatives of $F^{i}$ at $W$ exist for every $i=1, \dots, n$ and 
        \begin{align*}
            DF_{W} = \left( \frac{\partial F^{i}}{\partial Z_{j}}(W)\right)_{m\times n}
        \end{align*}
    \end{enumerate}
\end{The}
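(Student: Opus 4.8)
The plan is to transcribe, almost verbatim, the classical proof that a vector-valued map is differentiable exactly when its coordinate functions are, with the hyperbolic norm $\nm\cdot\nm_{\ku}$ and its partial order $\preceq$ playing the roles of the Euclidean norm and $\le$. The two facts that make this work are recorded first. On the one hand, the idempotent description of $\nm\cdot\nm_{\ku}$ together with the ordinary norm inequalities $|v^{i}|\le\nm v\nm\le\sum_{i}|v^{i}|$ applied to each idempotent slab gives, for every $V=(V^{1},\dots,V^{m})\in\bc^{m}$,
\[
\nm V^{i}\nm_{\ku} \preceq \nm V\nm_{\ku} \preceq \sum_{i=1}^{m}\nm V^{i}\nm_{\ku}.
\]
On the other hand, since $\nm H\nm_{\ku}\in\mathbb{D}^{+}\setminus\zd$ is a positive hyperbolic number, multiplication by $\nm H\nm_{\ku}^{-1}$ preserves $\preceq$, so one may divide the inequalities above by $\nm H\nm_{\ku}$ without reversing them. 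Finally, via the embedding \eqref{embb} a $\bc$-linear map $L\colon\bc^{n}\to\bc^{m}$ has $\bc$-linear coordinates $L^{i}\colon\bc^{n}\to\bc$, and conversely any such tuple assembles into a $\bc$-linear map; hence there is no obstruction to splitting or reassembling candidate derivatives.

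With these in hand I would dispatch (1) and (2) simultaneously. For the forward implication, assume $F$ is bicomplex differentiable with $DF_{Z}=L_{Z}$ and let $L^{i}_{Z}$ be its $i$-th coordinate. The left inequality gives $\nm F^{i}(Z+H)-F^{i}(Z)-L^{i}_{Z}(H)\nm_{\ku}\preceq\nm F(Z+H)-F(Z)-L_{Z}(H)\nm_{\ku}$; dividing by $\nm H\nm_{\ku}$ and letting $H\to 0$ through $\nm H\nm_{\ku}\notin\zd$ forces each $F^{i}$ to be differentiable with $DF^{i}_{Z}=L^{i}_{Z}$, which is both the forward direction of (1) and the formula (2). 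For the converse, given differentiable coordinates set $L_{Z}:=(DF^{1}_{Z},\dots,DF^{m}_{Z})$ and use the right inequality in the squeeze
\[
0 \preceq \frac{\nm F(Z+H)-F(Z)-L_{Z}(H)\nm_{\ku}}{\nm H\nm_{\ku}} \preceq \sum_{i=1}^{m}\frac{\nm F^{i}(Z+H)-F^{i}(Z)-DF^{i}_{Z}(H)\nm_{\ku}}{\nm H\nm_{\ku}},
\]
whose right-hand side tends to $0$; hence the middle term does too and $F$ is differentiable with the asserted derivative.

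For (3) I would argue coordinatewise. A differentiable $F^{i}$ has $DF^{i}_{Z}\colon\bc^{n}\to\bc$ a $\bc$-linear functional, hence of the form $DF^{i}_{Z}(H)=\sum_{j}a^{i}_{j}H_{j}$ for unique $a^{i}_{j}\in\bc$. Restricting the increment to the $j$-th bicomplex coordinate, $H=(0,\dots,H_{j},\dots,0)$, the differentiability estimate says $F^{i}(W_{1},\dots,W_{j}+H_{j},\dots,W_{n})-F^{i}(W)=a^{i}_{j}H_{j}+R$ with $\nm R\nm_{\ku}/\nm H_{j}\nm_{\ku}\to 0$; isolating the increment $H_{j}$ in the $j$-th slot therefore exhibits $a^{i}_{j}$ as the $j$-th partial derivative $\tfrac{\partial F^{i}}{\partial Z_{j}}(W)$. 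In particular all partials exist, and combining with (2) yields $DF_{W}=\big(\tfrac{\partial F^{i}}{\partial Z_{j}}(W)\big)_{m\times n}$.

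The routine content is the transcription of the classical several-variable proofs (as in \cite{Gunning2009}); the points demanding genuine care are structural. First, every limit is taken over the zero-divisor-avoiding set $\nm H\nm_{\ku}\notin\zd$, so I must check throughout that the divisions stay inside $\mathbb{D}^{+}\setminus\zd$ and that convergence to $0$ in the partially ordered algebra $\mathbb{D}^{+}$ is read off componentwise along $\ebf$ and $\ebft$, where the squeeze is legitimate. Second, and this is where I expect the real friction, is step (3): extracting the matrix entry $a^{i}_{j}$ from the coordinate-restricted difference quotient requires dividing by the increment $H_{j}$ itself rather than by its hyperbolic modulus, so the identification of the defined partial derivative with the coefficient of $DF^{i}_{Z}$ must be carried out with the behaviour of $\nm\cdot\nm_{\ku}$ under division kept firmly in view. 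Everything else is bookkeeping against the inequalities set up in the first paragraph.
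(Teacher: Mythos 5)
Your proposal is correct and follows essentially the same route as the paper's proof: the same componentwise sandwich inequality for $\nm\cdot\nm_{\ku}$, divided through by the invertible positive hyperbolic number $\nm H\nm_{\ku}$, dispatches (1) and (2), and restricting the increment to a single coordinate slot gives (3) via $DF^{i}_{W}(E^{j})=\partial F^{i}/\partial Z_{j}$. The friction you flag in step (3) is real but not yours to repair: the paper's stated definition of the partial derivative divides by $\nm H\nm_{\ku}$ rather than by $H$ itself (as literally written the limit would fail whenever the coefficient is nonzero, since $H/\nm H\nm_{\ku}$ has no limit on $\tr$), and the paper's own proof, like yours, implicitly uses the quotient by the increment $H$.
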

    \begin{proof}
        For each $i$, the $i$-th entry of the bicomplex vector $F(W+H)-F(W)-L_{W}(H)$ is $F^{i}(W+H) - F^{i}(W) - L_{W}^{i}(H)$, where $L^{i}_{W}$ is the $i$-th row of $L_{W}$ and $H \in \bc^{n}$ is such that $H \to 0$ and $\nm H \nm_{\ku} \notin \zd$. One has
        {\small
        \begin{align*}
            \frac{\nm F^{i}(W+H) - F^{i}(W) - L_{W}^{i}(H) \nm_{\ku}}{\nm H \nm_{\ku}} \preceq \frac{\nm F(W+H)-F(W)-L_{W}(H) \nm_{\ku}}{\nm H \nm_{\ku}} \preceq \sum_{j=1}^{m} \frac{\nm F^{j}(W+H) - F^{j}(W) - L_{W}^{j}(H) \nm_{\ku}}{\nm H \nm_{\ku}}.
        \end{align*}}
        \hskip-4pt A straightforward argument shows that these inequalities imply the first and the second assertions. In virtue of that, for the third item, we only need to consider the case $m=1$. In Definition \ref{drv} we take $H = (0, \dots, 0, H_{i}, 0, \dots, 0)$ with $H_{i} \to 0$ and $H_{i} \notin \zd$. This leads to the existence of the partial derivatives of every $F^{i}$ and that $DF^{i}_{W}(E^{j}) = \frac{\partial F^{i}} {\partial Z_{j}}$, where $E^{j} = (0, \dots, 0, 1, 0, \dots, 0) \in \bc^{n}$ is the $j$-th canonical bicomplex vector. But this completely determines $DF^{i}_{W}$ and we conclude.
    \end{proof}

\begin{Cor}\label{bicor1} Let $F : \mathcal{U} \subset \bc^{n} \longrightarrow \bc^{m}$ be a bicomplex holomorphic map.
    \begin{enumerate}
        \item $F$ is bicomplex holomorphic on each bicomplex variable separately.
        \item $F$ has an idempotent representation
                \begin{align*}
                    F(z_{1},z_{2}) = f_{1}(z_{1})\ebf + f_{2}(z_{2})\ebft,
                \end{align*}
              where $f_{i} : \mathcal{U}_{i} \subset \co^{n} \longrightarrow \co^{m}$ is complex holomorphic, for $i=1,2$.
        \item The derivative of $F$ has an idempotent representation
                \begin{align*}
                    DF_{(z_{1},z_{2})} = (Df_{1})_{z_{1}}\ebf + (Df_{2})_{z_{2}}\ebft.
                \end{align*}
    \end{enumerate}
\end{Cor}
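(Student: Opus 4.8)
The plan is to exploit the ring direct-sum decomposition $\bc = \co\ebf \oplus \co\ebft$, under which every object in Definition~\ref{drv} --- the map $F$, the increment $H$, the bicomplex-linear map $L_Z$, the hyperbolic norm, and the quotient --- splits into two independent complex pieces. I would dispatch item (1) first and separately, since it follows at once from the defining limit: fixing all coordinates except the $j$-th and restricting \eqref{diflimit} to increments $H = (0,\dots,0,H_j,0,\dots,0)$ with $H_j \notin \zd$ (so that $\nm H \nm_{\ku} = \nm H_j \nm_{\ku} \notin \zd$), one reads off that the one-variable map $Z_j \mapsto F(W_1,\dots,Z_j,\dots,W_n)$ is bicomplex differentiable at $W_j$, with derivative the bicomplex-linear map $H_j \mapsto L_W(0,\dots,H_j,\dots,0)$. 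Hence $F$ is separately bicomplex holomorphic.

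For item (2), I would write $F = g_1\ebf + g_2\ebft$ with $g_1,g_2 : \mathcal{U} \to \co^m$ and, via the embedding \eqref{embb}, decompose the derivative as $L_Z = L_Z^1\ebf + L_Z^2\ebft$, where each $L_Z^i : \co^n \to \co^m$ is $\co$-linear and $L_Z(h_1\ebf + h_2\ebft) = L_Z^1(h_1)\ebf + L_Z^2(h_2)\ebft$. Since the hyperbolic norm and the quotient are computed componentwise in the idempotent basis, and since $\nm H \nm_{\ku} \to 0$ in $\mathbb{D}^{+}$ forces both coordinates to tend to $0$, the single limit \eqref{diflimit} is equivalent to the pair of complex limits
\[
\lim_{\substack{(h_1,h_2)\to 0\\ h_1,h_2\neq 0}} \frac{\nm g_i(z_1+h_1,z_2+h_2) - g_i(z_1,z_2) - L_Z^i(h_i)\nm}{\nm h_i\nm} = 0, \qquad i=1,2,
\]
where $Z = z_1\ebf + z_2\ebft$ and $H = h_1\ebf + h_2\ebft$.

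The crux --- and the step I expect to be the main obstacle --- is upgrading each of these limits into the assertion that $g_i$ depends on a single block of variables; the algebraic splitting above is routine, but the independence is genuine analysis. Focusing on $i=1$, the point is that $L_Z^1(h_1)$ sees only the $\ebf$-part $h_1$. Fixing a direction $v \in \co^n$ and a small scalar $t$, I would set $h_2 = tv$ and let $h_1 \to 0$ inside the $i=1$ limit: for every $\varepsilon > 0$ the estimate $\nm g_1(z_1+h_1,z_2+tv) - g_1(z_1,z_2) - L_Z^1(h_1)\nm \le \varepsilon\nm h_1\nm$ holds for $h_1$ small, and letting $h_1 \to 0$ --- using that bicomplex differentiability implies continuity of $F$, hence of $g_1$, together with $L_Z^1(h_1) \to 0$ --- yields $g_1(z_1,z_2+tv) = g_1(z_1,z_2)$. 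Thus $g_1$ is locally independent of $z_2$, and by the product structure $\mathcal{U} = \mathcal{U}_1\ebf + \mathcal{U}_2\ebft$ it descends to a map $f_1 : \mathcal{U}_1 \to \co^m$ with $g_1(z_1,z_2) = f_1(z_1)$; symmetrically $g_2(z_1,z_2) = f_2(z_2)$. Restoring the $i=1$ limit along the admissible path $h_2 = h_1$ then reads $\nm f_1(z_1+h_1) - f_1(z_1) - L_Z^1(h_1)\nm / \nm h_1\nm \to 0$, i.e. $f_1$ is complex differentiable at $z_1$ with $\co$-linear derivative $L_Z^1$; as $z_1$ ranges over $\mathcal{U}_1$, this makes $f_1$ holomorphic, and likewise $f_2$.

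Finally, item (3) is a bookkeeping consequence: the identifications $L_Z^1 = (Df_1)_{z_1}$ and $L_Z^2 = (Df_2)_{z_2}$ obtained above, combined with $DF_{(z_1,z_2)} = L_Z = L_Z^1\ebf + L_Z^2\ebft$, give $DF_{(z_1,z_2)} = (Df_1)_{z_1}\ebf + (Df_2)_{z_2}\ebft$. In summary, the only real analytic content is the independence argument in item (2); everything else reduces to the componentwise algebra of the idempotent basis together with the continuity of differentiable maps.
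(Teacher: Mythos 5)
Your route is genuinely different from the paper's. The paper reduces to $m=1$, gets item (1) from Theorem~\ref{form}, and then delegates items (2) and (3) to the one-bicomplex-variable case by citing \cite[Theorems 7.6.3 and Corollary 7.6.6]{Luna2015}, assembling the several-variables statement from the known $n=1$ splitting applied in each variable $Z_j$ separately. You instead give a self-contained several-variables argument: you split the limit \eqref{diflimit} componentwise in the idempotent basis (legitimate, since $\nm\cdot\nm_{\ku}$ and the order $\preceq$ are computed coordinatewise in $\mathbb{D}^{+}$) and prove directly that the $\ebf$-component is independent of $z_2$. This costs more analysis but avoids importing the book's one-variable theorem; your item (1) coincides with the mechanism already inside the proof of Theorem~\ref{form}, and item (3) is indeed pure bookkeeping once (2) is in place.

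The one step that needs more care is the passage to the limit $h_1 \to 0$ with $h_2 = tv$ fixed, where you invoke ``bicomplex differentiability implies continuity of $F$, hence of $g_1$.'' The point $z_1\ebf + (z_2+tv)\ebft$ is being approached along increments of the form $h_1\ebf + 0\,\ebft$, which lie in $\zd$ --- exactly the increments excluded from the limit in Definition~\ref{drv}. Differentiability of $F$ at that point therefore says nothing, by itself, about the behaviour of $g_1$ along this slice; continuity in zero-divisor directions is not the one-line consequence of differentiability that it is in classical calculus, and the paper itself treats continuity as a separate hypothesis in Corollary~\ref{vanish}. The gap is fillable because $F$ is differentiable on all of $\mathcal{U}$, not just at one point: writing $Z_1 = Z_0 + h_1\ebf$ and comparing the first-order estimates at $Z_0$ (with increment $h_1\ebf + K$) and at $Z_1$ (with increment $K$) along a common small admissible $K = k_1\ebf + k_2\ebft$, then letting $K \to 0$, one extends the estimate $\nm F(Z_0+H) - F(Z_0) - L_{Z_0}(H)\nm_{\ku} \preceq \varepsilon \nm H \nm_{\ku}$ to zero-divisor increments $H = h_1\ebf$; this yields the continuity you use (and, reading off the $\ebft$-component, even gives $g_2(z_1+h_1,z_2)=g_2(z_1,z_2)$ directly). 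With that lemma supplied, your independence argument and the remaining identifications go through as written.
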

\begin{proof}
    It is enough to prove the statement for $m=1$. The first item follows from Theorem \ref{form} and the last two assertions from item 1 and the analogous results for $n=1$ in \cite[Theorems 7.6.3 and Corollary 7.6.6]{Luna2015}.
\end{proof}

\begin{Cor}[Hartogs' Theorem for bicomplex maps]\label{bicor2}
    If $F : \mathcal{U} \subset \bc^{n} \longrightarrow \bc$ is bicomplex holomorphic on each bicomplex variable, then $F$ is bicomplex holomorphic.
\end{Cor}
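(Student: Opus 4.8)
The plan is to reduce the statement to the classical Hartogs' theorem for holomorphic functions of several complex variables by passing to the idempotent decomposition. First I would write $F = F_{1}\ebf + F_{2}\ebft$, where $F_{1}, F_{2} : \mathcal{U}_{1}\times\mathcal{U}_{2}\subset \co^{n}\times\co^{n}\longrightarrow\co$ are the complex-valued components obtained by projecting onto $\ebf$ and $\ebft$. This decomposition is purely algebraic and needs no regularity of $F$. Here I use that $\mathcal{U} = \mathcal{U}_{1}\ebf + \mathcal{U}_{2}\ebft$ has a product structure in idempotent coordinates, so that $z_{1}$ ranges over $\mathcal{U}_{1}$ and $z_{2}$ over $\mathcal{U}_{2}$ independently.

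The central step is to transfer the separate bicomplex holomorphy of $F$ into separate complex holomorphy of $F_{1}$ and $F_{2}$. I would fix an index $i$, freeze all variables $Z_{j}$ with $j\neq i$, and observe that by hypothesis the resulting one-variable map $Z_{i}\mapsto F$ is bicomplex holomorphic. The one-variable idempotent characterization \cite[Theorem 7.6.3]{Luna2015} then applies: writing $Z_{i} = z_{1}^{(i)}\ebf + z_{2}^{(i)}\ebft$, one obtains $F = h_{1}(z_{1}^{(i)})\ebf + h_{2}(z_{2}^{(i)})\ebft$ with $h_{1}, h_{2}$ complex holomorphic in one variable. Comparing idempotent components shows that $F_{1}$ is independent of $z_{2}^{(i)}$ and complex holomorphic in $z_{1}^{(i)}$, while $F_{2}$ is independent of $z_{1}^{(i)}$ and complex holomorphic in $z_{2}^{(i)}$. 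Letting $i$ range over $1,\dots,n$ yields $F_{1} = f_{1}(z_{1})$ and $F_{2} = f_{2}(z_{2})$ with $f_{1}, f_{2} : \co^{n}\to\co$ separately complex holomorphic in each of their complex variables. It is essential that I extract only separate holomorphy, with no continuity in the remaining variables, since supplying that regularity is precisely the role of Hartogs.

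With the components reduced to separately holomorphic complex functions, I would invoke the classical Hartogs' theorem (see \cite{Gunning2009}) to conclude that $f_{1}$ and $f_{2}$ are jointly complex holomorphic on $\mathcal{U}_{1}$ and $\mathcal{U}_{2}$, respectively. Finally I would appeal to the converse of Corollary \ref{bicor1}: if $f_{1}, f_{2}$ are complex holomorphic, then $F = f_{1}(z_{1})\ebf + f_{2}(z_{2})\ebft$ is bicomplex holomorphic. As this direction is not literally stated above, I would verify it directly from Definition \ref{drv}: setting $L_{Z} = (Df_{1})_{z_{1}}\ebf + (Df_{2})_{z_{2}}\ebft$, the difference quotient splits along $\ebf$ and $\ebft$, and because $\nm \cdot \nm_{\ku}$ is componentwise, each idempotent part tends to $0$ in $\mathbb{D}^{+}$ by complex differentiability of $f_{1}, f_{2}$.

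I expect the main obstacle to be the central step: carefully justifying, one frozen variable at a time, that separate bicomplex holomorphy forces each idempotent component to depend on only one of the two complex coordinate blocks and to be separately holomorphic there, all while being scrupulous not to smuggle in joint continuity, which would trivialize the appeal to Hartogs. The remaining steps, namely the algebraic decomposition, the application of classical Hartogs, and the short converse verification, are routine.
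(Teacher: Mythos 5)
Your proof is correct and follows essentially the same route as the paper's: pass to the idempotent components, use the one-variable bicomplex theory to see that each component depends only on one block of complex variables and is separately holomorphic there, then invoke classical Hartogs and reassemble. You merely spell out in more detail the variable-separation step and the converse verification that the paper's terse proof leaves implicit.
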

    \begin{proof}
        Let us consider the idempotent representation of the limit in \eqref{diflimit}. The hypothesis implies that $f_{1}(z_{1})$ and $f_{2}(z_{2})$ are complex holomorphic functions on each $z_{ij}, z_{ij}$, respectively, where $z_{i} = (z_{i1}, \dots, z_{in}) \in \co^{n}$ and $i=1,2$. By Hartogs' Theorem for complex variables, $f_{1}$ and $f_{2}$ are complex holomorphic functions on $\mathcal{U}_{1}$ and $\mathcal{U}_{2}$, hence $F$ is bicomplex holomorphic on $\mathcal{U}$.
    \end{proof}
For $Z = \lambda_{1} + \ju \lambda_{2} \in \bc$, from equations \eqref{cjt}, \eqref{cjh}, and \eqref{cj}, we obtain the following complex differential operators:
\begin{equation}\label{operators1}
\begin{aligned}
\frac{\partial}{\partial {Z}} &= \frac{1}{2} \left(\frac{\partial}{\partial \lambda_1} +\ju \frac{\partial}{\partial \lambda_2}\right),\\
\frac{\partial}{\partial \cjh{Z}} &= \frac{1}{2} \left(\frac{\partial}{\partial \lambda_1} -\ju \frac{\partial}{\partial \lambda_2}\right),\\
\frac{\partial}{\partial \cjt{Z}} &= \frac{1}{2} \left(\frac{\partial}{\partial \cj{\lambda}_1} -\ju \frac{\partial}{\partial \cj{\lambda}_2}\right),\\
\frac{\partial}{\partial \cj{Z}} &= \frac{1}{2} \left(\frac{\partial}{\partial \cj{\lambda}_1} +\ju \frac{\partial}{\partial \cj{\lambda}_2}\right).
     \end{aligned}
\end{equation}
By writting $Z_{i} = x_{i} + \iu y_{i} + \ju v_{i} + \ku t_{i} \in \bc \simeq \re^{4}$, the above equations yield the following real differential operators:
\begin{equation}\label{operators2}
\begin{aligned}
    \frac{\partial }{\partial Z_{i}} &= \frac{1}{4}\left(\frac{\partial }{\partial x_{i}} + \iu \frac{\partial }{\partial y_{i}} + \ju\frac{\partial }{\partial v_{i}} +\ku \frac{\partial }{\partial t_{i}} \right),\\
    \frac{\partial }{\partial \cjt{Z}_{i}} &= \frac{1}{4}\left(\frac{\partial }{\partial x_{i}} + \iu \frac{\partial }{\partial y_{i}} - \ju\frac{\partial }{\partial v_{i}} -\ku \frac{\partial }{\partial t_{i}}\right), \\
    \frac{\partial }{\partial \cjh{Z}_{i}} &= \frac{1}{4}\left( \frac{\partial }{\partial x_{i}} - \iu \frac{\partial }{\partial y_{i}} - \ju\frac{\partial }{\partial v_{i}} -\ku \frac{\partial }{\partial t_{i}}\right), \\
    \frac{\partial }{\partial \cj{Z}_{i}} &= \frac{1}{4}\left(\frac{\partial }{\partial x_{i}} - \iu \frac{\partial }{\partial y_{i}} + \ju\frac{\partial }{\partial v_{i}} -\ku \frac{\partial }{\partial t_{i}}\right).
\end{aligned}
\end{equation}
Now, a bicomplex-valued map $F : \mathcal{U} \subset \bc^{n} \longrightarrow \bc$ can be seen as a real map
\begin{equation*}
 F = (F^{1}, F^{2}, F^{3}, F^{4}): \mathcal{U} \subset \re^{4n} \longrightarrow \re^{4},   
\end{equation*}
where $F^{i}$ is a real-valued function. It holds that:
\begin{align*}
    \frac{\partial F}{\partial Z_{i}} = \frac{\partial F^{1}}{ \partial Z_{i}} + \iu \frac{\partial F^{2}}{\partial Z_{i}} + \ju \frac{\partial F^{3}}{\partial Z_{i}} + \ku \frac{\partial F^{4}}{\partial Z_{i}},
\end{align*}
and similarly for the derivatives $\frac{\partial F}{\partial \cjt{Z}_{i}}, \frac{\partial F}{\partial \cjh{Z}_{i}}, \frac{\partial F}{\partial \overline{Z}_{i}}$.

\begin{Cor}\label{vanish}
    Let $F : \mathcal{U} \subset \bc^{n} \longrightarrow \bc$ be a continuous bicomplex valued function. Then $F$ is bicomplex holomorphic if and only if 
        \begin{align*}
            \frac{\partial F}{\partial \cjt{Z}_{i}}, \frac{\partial F}{\partial \cjh{Z}_{i}}, \frac{\partial F}{\partial \cj{Z}_{i}} \equiv 0,
        \end{align*}
    on $\mathcal{U}$ for all $i=1, \dots, k$.
\end{Cor}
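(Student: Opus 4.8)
The plan is to pass to the idempotent representation and thereby reduce the statement to the classical several-variables Cauchy--Riemann equations for the two complex components. Write $F = f_{1}\ebf + f_{2}\ebft$, so that $F$ is continuous if and only if the complex-valued components $f_{1}$ and $f_{2}$ are continuous. By Corollary \ref{bicor1}, bicomplex holomorphy of $F$ is equivalent to $f_{1}$ being a holomorphic function of $z_{1}\in\co^{n}$ alone and $f_{2}$ a holomorphic function of $z_{2}\in\co^{n}$ alone; in Wirtinger terms this says
\begin{align*}
\frac{\partial f_{1}}{\partial\bar{z}_{1i}}=\frac{\partial f_{1}}{\partial z_{2i}}=\frac{\partial f_{1}}{\partial\bar{z}_{2i}}=0, \qquad \frac{\partial f_{2}}{\partial\bar{z}_{2i}}=\frac{\partial f_{2}}{\partial z_{1i}}=\frac{\partial f_{2}}{\partial\bar{z}_{1i}}=0,
\end{align*}
for every $i$. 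Hence it suffices to prove that the three bicomplex conjugate derivatives vanish precisely when these six families of complex equations hold.

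The key step is to rewrite the operators $\partial/\partial\cjt{Z}_{i}$, $\partial/\partial\cjh{Z}_{i}$, $\partial/\partial\cj{Z}_{i}$ in idempotent form. Starting from \eqref{operators1} and applying the chain rule for the holomorphic change of variables $z_{1}=\lambda_{1}-\iu\lambda_{2}$, $z_{2}=\lambda_{1}+\iu\lambda_{2}$ together with its conjugate, each of the four operators in \eqref{operators1}--\eqref{operators2} takes the shape $\ebf\,\partial/\partial(\cdot)+\ebft\,\partial/\partial(\cdot)$, where each slot $(\cdot)$ is one of $z_{1i},\bar{z}_{1i},z_{2i},\bar{z}_{2i}$; the two operators attached to $Z$ and $\cjh{Z}$ involve only the holomorphic directions, while those attached to $\cjt{Z}$ and $\cj{Z}$ involve only the antiholomorphic ones. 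Applying such an operator to $F=f_{1}\ebf+f_{2}\ebft$ and using $\ebf^{2}=\ebf$, $(\ebft)^{2}=\ebft$, $\ebf\ebft=0$ collapses each conjugate derivative to a single term on each idempotent component. Keeping careful track of the relations $\iu\ju=\ku$, $\ebf-\ebft=\iu\ju$ and the idempotent products, I expect to obtain, component by component, that the simultaneous vanishing of $\partial F/\partial\cjt{Z}_{i}$, $\partial F/\partial\cjh{Z}_{i}$, $\partial F/\partial\cj{Z}_{i}$ is exactly the six complex equations above. Equivalently, the joint kernel of the three conjugate operators is precisely the class of maps whose idempotent components are holomorphic in a single complex variable; this bookkeeping is the part of the argument that requires care, and it is where the fact that one needs \emph{three} conjugations, rather than a single one as in the complex case, is exactly accounted for.

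With this equivalence in hand both implications follow quickly. If $F$ is bicomplex holomorphic, Corollary \ref{bicor1} furnishes holomorphic components $f_{1}(z_{1})$, $f_{2}(z_{2})$, whose Wirtinger conjugate derivatives vanish, so the three bicomplex conjugate derivatives vanish. Conversely, if the three conjugate derivatives vanish then each component satisfies the complex Cauchy--Riemann equations in its own variable and is independent of the remaining variables; since $f_{1}$ and $f_{2}$ are merely assumed continuous, I would invoke the classical Cauchy--Riemann theorem in its Looman--Menchoff form to upgrade the vanishing of $\partial/\partial\bar{z}$ to genuine holomorphy of each $f_{j}$, and then conclude via Corollary \ref{bicor1} (or via Corollary \ref{bicor2} once the variables are separated) that $F$ is bicomplex holomorphic. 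The main obstacle is thus the middle step: verifying the exact idempotent decomposition of the three conjugate operators and checking that their common kernel coincides with the bicomplex-holomorphic functions, consistently with the conventions fixed in \eqref{operators1}--\eqref{operators2}.
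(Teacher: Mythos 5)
Your argument is correct, but it follows a different reduction than the paper's. The paper's proof is a two-line citation: by item 1 of Corollary \ref{bicor1} one reduces to a single bicomplex variable, and the $n=1$ case is delegated to the one-variable bicomplex Cauchy--Riemann theorems of \cite[Theorems 7.4.3 and 7.6.5]{Luna2015}. You instead bypass the one-variable bicomplex theory entirely and reduce to classical several-variables complex analysis via the idempotent representation. The computation you flag as the delicate middle step does check out: with $z_{1}=\lambda_{1}-\iu\lambda_{2}$, $z_{2}=\lambda_{1}+\iu\lambda_{2}$ the chain rule gives
\begin{align*}
\frac{\partial}{\partial \cjt{Z}_{i}}=\ebf\frac{\partial}{\partial \bar{z}_{1i}}+\ebft\frac{\partial}{\partial \bar{z}_{2i}},\qquad
\frac{\partial}{\partial \cj{Z}_{i}}=\ebft\frac{\partial}{\partial \bar{z}_{1i}}+\ebf\frac{\partial}{\partial \bar{z}_{2i}},\qquad
\frac{\partial}{\partial \cjh{Z}_{i}}=\ebft\frac{\partial}{\partial z_{1i}}+\ebf\frac{\partial}{\partial z_{2i}},
\end{align*}
so applying these to $F=f_{1}\ebf+f_{2}\ebft$ and using $\ebf\ebft=0$ yields exactly your six families of Wirtinger equations, namely that $f_{1}$ is antiholomorphically closed in $z_{1}$ and independent of $z_{2}$, and symmetrically for $f_{2}$. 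Two small points deserve attention. First, the implication ``$f_{1}(z_{1})$, $f_{2}(z_{2})$ holomorphic $\Rightarrow$ $F$ bicomplex holomorphic'' is the converse of Corollary \ref{bicor1} and is not literally stated in the paper; it does hold because the hyperbolic norm in \eqref{diflimit} splits componentwise, but you should say so. Second, your invocation of Looman--Menchoff is actually more careful than the paper on the regularity issue: the corollary only assumes $F$ continuous (with the conjugate partials existing and vanishing), and some such theorem is genuinely needed to upgrade this to holomorphy of the components, whereas the paper absorbs this into the citation. What your route buys is a self-contained proof using only standard complex analysis; what the paper's route buys is brevity and consistency with the one-variable bicomplex literature it builds on.
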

\begin{proof}
    This is a consequence of item 1 of Corollary \ref{bicor1} and the analogous statements for $n=1$ in \cite[Theorems 7.4.3 and 7.6.5]{Luna2015}.
\end{proof}

The next results are immediate consequences of the idempotent representations of bicomplex holomorphic maps and their derivatives in Corollary \ref{bicor1} and the analogous statements for complex holomorphic maps.

\begin{The}[Chain rule]
    Let $F = (F^{1}, \dots, F^{m}) : \mathcal{U} \subset \bc^{n} \longrightarrow \bc^{m}$ and $G : \mathcal{V} \subset \bc^{m} \longrightarrow \bc^{k}$ be bicomplex holomorphic maps such that $F(\mathcal{U}) \cap \mathcal{V} \neq \emptyset$. Then $G \circ F$ is bicomplex differentiable at each $Z \in \mathcal{U}$ and 
    \begin{align*}
        D(G \circ F)_{Z} = DG_{F(Z)} \circ DF_{Z}.
    \end{align*}
\end{The}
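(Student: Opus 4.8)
The plan is to reduce the statement to the classical chain rule for complex holomorphic maps by passing to the idempotent representation, where the two ``channels'' $\ebf$ and $\ebft$ evolve independently. First I would invoke Corollary~\ref{bicor1} to write
\begin{align*}
    F(z_{1},z_{2}) = f_{1}(z_{1})\ebf + f_{2}(z_{2})\ebft, \qquad G(w_{1},w_{2}) = g_{1}(w_{1})\ebf + g_{2}(w_{2})\ebft,
\end{align*}
where $f_{i} : \mathcal{U}_{i} \subset \co^{n} \to \co^{m}$ and $g_{i} : \mathcal{V}_{i} \subset \co^{m} \to \co^{k}$ are complex holomorphic for $i=1,2$, the composition being considered on the open set $F^{-1}(\mathcal{V})$ where it is defined. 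The same corollary supplies the idempotent form of the derivatives,
\begin{align*}
    DF_{(z_{1},z_{2})} = (Df_{1})_{z_{1}}\ebf + (Df_{2})_{z_{2}}\ebft, \qquad DG_{(w_{1},w_{2})} = (Dg_{1})_{w_{1}}\ebf + (Dg_{2})_{w_{2}}\ebft.
\end{align*}

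The key computation will be that the composition decouples along the two idempotents. Since $\ebf$ and $\ebft$ are orthogonal idempotents ($\ebf\cdot\ebft = 0$, $\ebf^{2}=\ebf$, $(\ebft)^{2}=\ebft$), evaluating $G$ at the point $F(Z)$, whose idempotent components are $w_{i}=f_{i}(z_{i})$, yields
\begin{align*}
    (G\circ F)(z_{1},z_{2}) = g_{1}\bigl(f_{1}(z_{1})\bigr)\ebf + g_{2}\bigl(f_{2}(z_{2})\bigr)\ebft,
\end{align*}
so $G\circ F$ has idempotent components $g_{1}\circ f_{1}$ and $g_{2}\circ f_{2}$. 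By the classical complex chain rule each $g_{i}\circ f_{i}$ is complex holomorphic, whence Corollary~\ref{bicor1} shows $G\circ F$ is bicomplex holomorphic, in particular bicomplex differentiable at each $Z\in\mathcal{U}$, with
\begin{align*}
    D(G\circ F)_{(z_{1},z_{2})} = D(g_{1}\circ f_{1})_{z_{1}}\ebf + D(g_{2}\circ f_{2})_{z_{2}}\ebft.
\end{align*}

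It then remains to identify this with $DG_{F(Z)}\circ DF_{Z}$. I would apply the complex chain rule componentwise, $D(g_{i}\circ f_{i})_{z_{i}} = (Dg_{i})_{f_{i}(z_{i})}\circ (Df_{i})_{z_{i}}$, to obtain
\begin{align*}
    D(G\circ F)_{(z_{1},z_{2})} = \bigl[(Dg_{1})_{f_{1}(z_{1})}\circ (Df_{1})_{z_{1}}\bigr]\ebf + \bigl[(Dg_{2})_{f_{2}(z_{2})}\circ (Df_{2})_{z_{2}}\bigr]\ebft,
\end{align*}
and then match this against the idempotent expressions for $DF_{Z}$ and $DG_{F(Z)}$ above. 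I expect the only point requiring genuine care to be the verification that composition of bicomplex linear maps is computed channel by channel in the idempotent basis: under the embedding \eqref{embb} a bicomplex linear map acts block-diagonally on $\co^{n}\times\co^{n}$, so the product of the two block-diagonal images of $DG_{F(Z)}$ and $DF_{Z}$ is again block-diagonal and reproduces exactly the displayed right-hand side. Once this block-diagonal nature of matrix multiplication in $\mn(m\times n,\bc)$ is invoked, the two complex chain rules assemble into the bicomplex one, and everything else is a transcription of the complex theory through the orthogonal idempotents $\ebf,\ebft$.
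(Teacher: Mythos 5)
Your proposal is correct and follows exactly the route the paper intends: the paper states the chain rule as an immediate consequence of the idempotent representations in Corollary~\ref{bicor1} together with the classical complex chain rule applied to each channel. Your additional remark that composition of bicomplex linear maps is block-diagonal in the idempotent basis is the right detail to make that reduction explicit.
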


\begin{Def}
    Let $F : \mathcal{U} \subset \bc^{n} \longrightarrow \bc^{m}$ be a bicomplex holomorphic map with $n \geq m$. We say that $Z \in \mathcal{U}$ is a bicomplex singular point if $\rk (DF_{Z}) \neq (m,m)$. Otherwise, $Z$ is called a bicomplex regular point.
\end{Def}

If $F(z_{1},z_{2}) = f_{1}(z_{1})\ebf + f_{2}(z_{2})\ebft$ is the idempotent representation of $F$ and $W = w_{1}\ebf + w_{2}\ebft$ is a bicomplex singular point, then $w_{1}$ or $w_{2}$ are singular points of $f_{1}$ or $f_{2}$, respectively. 

\begin{The}[Implicit map]
    Let $F : \mathcal{U} \subset \bc^{n} \longrightarrow \bc^{m}$ be a bicomplex holomorphic map such that $m < n$. Suppose that for some $A \in \mathcal{U}$ one has $F(A) = 0$ and $\rk(DF_{W}'') = (m,m)$, where $DF_{W}''$ is the $m \times m$ block of $DF_{W}$ relative to the variables $Z_{n-m+1}, \dots, Z_{n}$. Then there exists an open set $\mathcal{U}' = \mathcal{V} \times \mathcal{W} \subset \bc^{n-m} \times \bc^{m}$ and a bicomplex holomorphic map $G : \mathcal{V} \longrightarrow \mathcal{W}$ such that 
    \begin{align*}
        F(Z', G(Z')) = 0 \quad \text{for all} \quad Z' \in \mathcal{V} \quad \text{and} \quad G(A') = A.
    \end{align*}
\end{The}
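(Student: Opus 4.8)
The plan is to reduce the statement to the classical holomorphic implicit function theorem applied separately to the two complex components of $F$, using the idempotent correspondence of Corollary \ref{bicor1}. First I would pass to the idempotent basis: write $F = f_1\ebf + f_2\ebft$ with $f_i : \mathcal{U}_i \subset \co^n \to \co^m$ complex holomorphic (Corollary \ref{bicor1}, item 2), and $A = a_1\ebf + a_2\ebft$ with $a_i \in \co^n$. Under $\co^n \simeq \co^{n-m}\times\co^m$ I split $a_i = (a_i', a_i'')$. The hypothesis $F(A) = 0$ becomes $f_1(a_1) = f_2(a_2) = 0$, and by the idempotent representation of the derivative (Corollary \ref{bicor1}, item 3) the block $DF_A''$ decomposes as $(Df_1)_{a_1}''\ebf + (Df_2)_{a_2}''\ebft$, where the double prime denotes the $m\times m$ Jacobian block in the last $m$ variables. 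Since the bicomplex rank is computed componentwise, the condition $\rk(DF_A'') = (m,m)$ says exactly that both complex blocks $(Df_1)_{a_1}''$ and $(Df_2)_{a_2}''$ are invertible.

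Next I would invoke the classical complex holomorphic implicit function theorem (see \cite{Gunning2009}) for each $f_i$. For $i=1,2$ this yields open sets $\mathcal{V}_i \subset \co^{n-m}$ and $\mathcal{W}_i \subset \co^m$ with $a_i' \in \mathcal{V}_i$ and $a_i'' \in \mathcal{W}_i$, together with a complex holomorphic map $g_i : \mathcal{V}_i \to \mathcal{W}_i$ satisfying $f_i(z_i', g_i(z_i')) = 0$ on $\mathcal{V}_i$ and $g_i(a_i') = a_i''$.

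Finally I would recombine. Setting $\mathcal{V} := \mathcal{V}_1\ebf + \mathcal{V}_2\ebft$, $\mathcal{W} := \mathcal{W}_1\ebf + \mathcal{W}_2\ebft$, and $G := g_1\ebf + g_2\ebft$, the converse of the idempotent correspondence---immediate from Corollary \ref{vanish} applied componentwise through Theorem \ref{form}, since holomorphy of $g_1, g_2$ is equivalent to the vanishing of the three conjugate derivatives of $G$---shows that $G : \mathcal{V} \to \mathcal{W}$ is bicomplex holomorphic. Evaluating on the idempotent basis gives $F(Z', G(Z')) = f_1(z_1', g_1(z_1'))\ebf + f_2(z_2', g_2(z_2'))\ebft = 0$ for every $Z' \in \mathcal{V}$, and $G(A') = a_1''\ebf + a_2''\ebft$, the $\bc^m$-component $A''$ of $A = (A', A'')$, as claimed.

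I expect no deep obstacle here; the content is essentially bookkeeping. The one point that needs care is verifying that the neighborhoods produced independently for $f_1$ and $f_2$ assemble into a genuine bicomplex open set $\mathcal{U}' = \mathcal{V}\times\mathcal{W}$ of the prescribed product shape. This holds because, under $\bc^k \simeq \co^k\times\co^k$, the subsets of the form $B_1\ebf + B_2\ebft$ with $B_i \subset \co^k$ open are precisely the product opens $B_1\times B_2$; consequently no mutual shrinking between the two idempotent components is required, and the two separate applications of the complex theorem glue automatically. A secondary check is that the converse idempotent correspondence used for $G$ is legitimate, which is exactly where Corollary \ref{vanish} enters.
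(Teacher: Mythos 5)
Your proposal is correct and follows exactly the route the paper intends: the paper gives no separate proof, stating only that the implicit (and inverse) map theorems are ``immediate consequences of the idempotent representations of bicomplex holomorphic maps and their derivatives in Corollary \ref{bicor1} and the analogous statements for complex holomorphic maps,'' which is precisely the componentwise reduction you carry out. Your added observations --- that the rank condition splits into invertibility of the two complex blocks, that product-shaped opens $B_1\ebf + B_2\ebft$ require no mutual shrinking, and that $G(A')$ should be read as the $\bc^m$-component $A''$ of $A$ --- are all consistent with (and slightly more careful than) the paper's treatment.
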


\begin{The}[Inverse map]
    Let $F: \mathcal{U} \subset \bc^{n} \longrightarrow \bc^{n}$ be a bicomplex holomorphic map such that $A \in \mathcal{U}$ is a bicomplex regular point. Then there exist open sets $\mathcal{V} \subset \bc^{m}$, $\mathcal{W} \subset \mathcal{U}$, and a bicomplex holomorphic inverse map $G : \mathcal{V} \longrightarrow \mathcal{W}$ of $F$ such that 
    \begin{align*}
        DG_{F(A)} \circ DF_{Z} = DF_{G(A)} \circ DF_{A} = \idt_{\bc^{n}}.
    \end{align*}
\end{The}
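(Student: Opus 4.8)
The plan is to reduce the bicomplex statement to two copies of the classical holomorphic inverse function theorem by passing to the idempotent representation, exactly in the spirit of Corollary~\ref{bicor1}. I would write $A = a_{1}\ebf + a_{2}\ebft$ with $a_{1}, a_{2} \in \co^{n}$, and let $F(z_{1}, z_{2}) = f_{1}(z_{1})\ebf + f_{2}(z_{2})\ebft$ be the idempotent representation guaranteed by item~2 of Corollary~\ref{bicor1}, where $f_{i} : \mathcal{U}_{i} \subset \co^{n} \to \co^{n}$ is complex holomorphic for $i = 1,2$. By item~3 of the same corollary, $DF_{A} = (Df_{1})_{a_{1}}\ebf + (Df_{2})_{a_{2}}\ebft$, so the bicomplex determinant factors as $\dete(DF_{A}) = \dete\big((Df_{1})_{a_{1}}\big)\ebf + \dete\big((Df_{2})_{a_{2}}\big)\ebft$. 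Since $A$ is a bicomplex regular point, $\rk(DF_{A}) = (n,n)$, which forces both complex Jacobians $(Df_{1})_{a_{1}}$ and $(Df_{2})_{a_{2}}$ to be invertible; equivalently, $\dete(DF_{A}) \notin \zd$.

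With this in hand I would apply the classical complex-analytic inverse function theorem separately to $f_{1}$ at $a_{1}$ and to $f_{2}$ at $a_{2}$. This yields open sets $\mathcal{W}_{i} \ni a_{i}$ and $\mathcal{V}_{i} \ni f_{i}(a_{i})$ in $\co^{n}$, together with complex holomorphic inverses $g_{i} : \mathcal{V}_{i} \to \mathcal{W}_{i}$ satisfying $g_{i} \circ f_{i} = \mathrm{id}$, $f_{i} \circ g_{i} = \mathrm{id}$, and $g_{i}(f_{i}(a_{i})) = a_{i}$. I then define the candidate inverse together with its domain and codomain by gluing along the idempotent basis:
\begin{align*}
    G(w_{1}, w_{2}) = g_{1}(w_{1})\ebf + g_{2}(w_{2})\ebft, \qquad \mathcal{V} = \mathcal{V}_{1}\ebf + \mathcal{V}_{2}\ebft, \qquad \mathcal{W} = \mathcal{W}_{1}\ebf + \mathcal{W}_{2}\ebft.
\end{align*}
Because each $g_{i}$ is complex holomorphic, the map $G$ is bicomplex holomorphic: this is the converse of item~2 of Corollary~\ref{bicor1}, which may be verified directly through Corollary~\ref{vanish}, whose vanishing conjugate-derivative conditions are tested separately on the two complex slices. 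The identities $G \circ F = \idt$ on $\mathcal{W}$ and $F \circ G = \idt$ on $\mathcal{V}$ are then immediate from the componentwise identities $g_{i} \circ f_{i} = \mathrm{id}$ and $f_{i} \circ g_{i} = \mathrm{id}$, since composition commutes with the idempotent decomposition $Z = z_{1}\ebf + z_{2}\ebft$.

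Finally, the derivative relations follow from the bicomplex chain rule together with item~3 of Corollary~\ref{bicor1}. Differentiating $G \circ F = \idt$ at $A$ and $F \circ G = \idt$ at $F(A)$ gives $DG_{F(A)} \circ DF_{A} = \idt_{\bc^{n}}$ and $DF_{A} \circ DG_{F(A)} = \idt_{\bc^{n}}$; read on the idempotent basis these amount to the complex relations $(Dg_{i})_{f_{i}(a_{i})} \circ (Df_{i})_{a_{i}} = I_{n}$ for $i = 1,2$, which are exactly what the two complex inverse function theorems provide.

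I do not expect a serious obstacle: the whole content is organizational, consisting of carrying each classical complex statement through the two idempotent slices and checking that holomorphy, invertibility, and differentiation all respect the decomposition $Z = z_{1}\ebf + z_{2}\ebft$. The one point deserving care is the passage from a bicomplex regular point to the \emph{simultaneous} invertibility of both complex Jacobians, i.e.\ that $\rk(DF_{A}) = (n,n)$ is equivalent to $\dete(DF_{A}) \notin \zd$; this is precisely the characterization of $\gl(n,\bc)$ recorded after the definition of the bicomplex determinant, and it is what guarantees that both copies of the complex inverse function theorem may be invoked at once.
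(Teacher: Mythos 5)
Your proposal is correct and follows exactly the route the paper intends: the paper states this theorem without a written proof, noting only that it is an immediate consequence of the idempotent representation of bicomplex holomorphic maps and their derivatives (Corollary \ref{bicor1}) together with the classical complex inverse function theorem, which is precisely the componentwise reduction you carry out. Your observation that bicomplex regularity $\rk(DF_{A})=(n,n)$ is what guarantees the simultaneous invertibility of both complex Jacobians is the right point to single out, and your spelled-out version fills in the details the paper leaves implicit.
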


\section{Bicomplex holomorphic Milnor fibration}\label{s3}

In this section, we prove a Milnor fibration type theorem for bicomplex holomorphic functions. The proof follows from slightly modifying Milnor's proof in \cite{Milnor1968}, which is also detailed in \cite{Cisneros2018}. Let us fix some notations. A bicomplex holomorphic function $F :(\bc^{n},0) \longrightarrow (\bc,0)$ has an idempotent representation 
\begin{align*}
    F(z_{1},z_{2}) = f_{1}(z_{1})\ebf + f_{2}(z_{2})\ebft,
\end{align*}
where $f_{1}$ and $f_{2}$ are holomorphic functions on separable variables, according to Corollary \ref{bicor1}. Moreover, in a small neighbourhood $\mathcal{V}$ of $\bc^{*}$, $f_{1}$ and $f_{2}$ have isolated critical values in $0$, respectively, by Bertini-Sard theorem. We consider a different norm on $\bc^{n}$ as follows. For $Z = z_{1}\ebf + z_{2}\ebft \in \bc^{n}$ we define
\begin{align*}
    \nm Z \nm_{\bc}^{2} = \nm z_{1}\nm^{2} + \nm z_{2} \nm^{2}.
\end{align*}
The bicomplex sphere will be
\begin{align*}
    \mathbb{S}_{\bc, \epsilon}^{4n-1} = \{ Z \in \bc^{n} : \nm Z \nm_{\bc} = \epsilon\}.
\end{align*}
The coordinate change from the idempotent to the polar representation will be denoted by 
\begin{align*}
    \Phi: \bc^{*} \longrightarrow \bc^{*}.
\end{align*}
The trigonometric representation of $F(Z)$ is $F(Z) = \nm F(Z) \nm_{\iu}e^{\ju\Theta(Z)}$ and the intersection $K_{\epsilon} := F^{-1}(\zd) \cap  \mathbb{S}_{\bc,\epsilon}^{4n-1}$ is called the \textit{bicomplex link}.

\begin{The}\label{milnfibr}
    Let $F: (\bc^{n},0) \longrightarrow (\bc,0)$ be a bicomplex holomorphic function germ. Then the map
    \begin{align}\label{milnfib}
        \varphi := \frac{F(Z)}{\nm F(Z)\nm_{\iu}} :  \mathbb{S}_{\bc,\epsilon}^{4n-1} \setminus K_{\epsilon} \longrightarrow \cc
    \end{align}
    is a locally trivial smooth fibration for every sufficiently small $\epsilon>0$.
\end{The}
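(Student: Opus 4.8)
The plan is to follow Milnor's original strategy for the complex Milnor fibration, the decisive simplification being that the idempotent representation reduces every analytic estimate to the two complex holomorphic functions $f_1$ and $f_2$. By Corollary \ref{bicor1} write $F(Z)=f_1(z_1)\ebf+f_2(z_2)\ebft$ with $f_i:\mathcal{U}_i\subset\co^n\to\co$ holomorphic on the separable variables $z_i$, and note $f_1(0)=f_2(0)=0$. Under the identification $\bc^n\simeq\co^n\times\co^n$ the norm $\nm\cdot\nm_\bc$ is the standard Euclidean norm, so $\mathbb{S}_{\bc,\epsilon}^{4n-1}$ is the round sphere $\{\nm z_1\nm^2+\nm z_2\nm^2=\epsilon^2\}$. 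Since $Z\in\zd$ iff $z_1=0$ or $z_2=0$, writing $V_i:=f_i^{-1}(0)$ one gets the complex analytic variety
\begin{align*}
 F^{-1}(\zd)=(V_1\times\co^n)\cup(\co^n\times V_2)\subset\co^{2n},\qquad 0\in F^{-1}(\zd).
\end{align*}
On $\mathbb{S}_{\bc,\epsilon}^{4n-1}\setminus K_\epsilon$ both $f_1,f_2$ are nonzero, $\nm F\nm_\iu=\sqrt{f_1f_2}\in\co^*$, and a direct computation using $\ju=-\iu\ebf+\iu\ebft$ gives $\varphi=e^{-\iu\Theta}\ebf+e^{\iu\Theta}\ebft$ with $\Theta=\arg_\iu F$ satisfying $e^{-2\iu\Theta}=f_1/f_2$. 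Equivalently the two real components of $\varphi$ are $\Re\Theta=\tfrac12(\arg f_2-\arg f_1)$ and $\Im\Theta=\tfrac12(\log|f_1|-\log|f_2|)$, so that $\varphi$ is governed entirely by $f_1$ and $f_2$.

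First I would establish transversality. Applying Milnor's first lemma (the curve selection lemma argument for complex analytic sets) to the variety $F^{-1}(\zd)\subset\co^{2n}$, there is $\epsilon_0>0$ such that for every $0<\epsilon\le\epsilon_0$ the sphere $\mathbb{S}_{\bc,\epsilon}^{4n-1}$ meets $F^{-1}(\zd)$ transversally; hence $K_\epsilon$ is a smooth submanifold and $\varphi$ is smooth on the complement. This step is verbatim the complex case, precisely because $F^{-1}(\zd)$ is complex analytic and $\nm\cdot\nm_\bc$ is Euclidean.

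The heart of the proof is to produce, on $\mathbb{S}_{\bc,\epsilon}^{4n-1}\setminus K_\epsilon$, a smooth vector field that realizes under $d\varphi$ any prescribed tangent direction of the $2$-manifold $\cc$, is tangent to the sphere, and whose flow is complete in the sense that its trajectories stay away from $K_\epsilon$. Here the idempotent splitting is essential: since $\arg f_1$ and $\log|f_1|$ depend only on $z_1$, while $\arg f_2,\log|f_2|$ depend only on $z_2$, the real gradients of $\Re\Theta$ and $\Im\Theta$ on $\re^{4n}$ split into a $z_1$-block built from $f_1$ and a $z_2$-block built from $f_2$. By the vanishing of the non-holomorphic derivatives (Corollary \ref{vanish}), each block obeys Milnor's gradient identities, which are consequences of the Cauchy--Riemann equations: $\nabla\arg f_i\perp\nabla\log|f_i|$ with equal norm. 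One then repeats Milnor's construction, patching locally defined fields away from the critical locus with a partition of unity and, near $K_\epsilon$, invoking the curve selection lemma on the real analytic ``bad set'' to choose the field so that $|f_1|$ and $|f_2|$ stay bounded below along the flow, which prevents escape to $K_\epsilon$.

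I expect this last step to be the main obstacle. Although the problem decouples into an $f_1$-problem in $z_1$ and an $f_2$-problem in $z_2$, the two factors are tied together by the single Euclidean constraint $\nm z_1\nm^2+\nm z_2\nm^2=\epsilon^2$, which is not a product of spheres; consequently one cannot merely take the product of two complex Milnor fibrations, and the curve selection estimate must be run simultaneously in both blocks. This is exactly the ``slight modification'' of Milnor's argument. Granting the vector field, the conclusion is standard: integrating it and using completeness of its flow on $\mathbb{S}_{\bc,\epsilon}^{4n-1}\setminus K_\epsilon$, parallel transport along paths in $\cc$ furnishes local trivializations of $\varphi$ over contractible charts (properness is unavailable since $K_\epsilon$ is removed, which is why the flow argument replaces Ehresmann's theorem). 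Hence $\varphi$ is a locally trivial smooth fibration for every $0<\epsilon\le\epsilon_0$.
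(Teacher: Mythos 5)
Your outline is correct in substance but takes a genuinely different route from the paper at the decisive step. Both arguments start identically: pass to the idempotent representation $F=f_{1}(z_{1})\ebf+f_{2}(z_{2})\ebft$, identify $K_{\epsilon}$ with the link of $\{f_{1}f_{2}=0\}$, and reduce $\varphi$ to data of $f_{1},f_{2}$ alone (your explicit formula $\varphi=e^{\ju\Theta}$ with $e^{-2\iu\Theta}=f_{1}/f_{2}$ is correct and sharper than anything written in the paper), and both finish by integrating a lifting vector field rather than invoking Ehresmann. The difference is how the lift is produced. The paper factors $\varphi=\pi_{\iu}\circ\Phi\circ F$, gets submersivity from the transversality of the fibres of $f_{1},f_{2}$ to the spheres $\mathbb{S}^{2n-1}_{\epsilon/\sqrt{2}}$ (Lemma \ref{l1}), and then simply sets $\mathbb{W}=\mathbb{W}_{1}\ebf+\mathbb{W}_{2}\ebft$, where $\mathbb{W}_{i}$ is the classical Milnor vector field of $f_{i}$ on $\mathbb{S}^{2n-1}_{\epsilon/\sqrt{2}}$ (Lemma \ref{l3}); all curve-selection estimates are thereby delegated wholesale to the complex one-function case. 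You instead rerun Milnor's gradient and curve-selection argument directly on the round sphere $\nm z_{1}\nm^{2}+\nm z_{2}\nm^{2}=\epsilon^{2}$, using the block splitting of $\nabla\Re\Theta$, $\nabla\Im\Theta$ and the Cauchy--Riemann identities for each $f_{i}$. Your observation that the sphere is not a product of two spheres, so one cannot merely multiply the two complex fibrations, is precisely where the two proofs part ways --- and it is in fact the delicate point of the paper's own argument, whose field $\mathbb{W}_{1}\ebf+\mathbb{W}_{2}\ebft$ is constructed on, and tangent to, the product $\mathbb{S}^{2n-1}_{\epsilon/\sqrt{2}}\ebf+\mathbb{S}^{2n-1}_{\epsilon/\sqrt{2}}\ebft$ rather than on all of $\mathbb{S}^{4n-1}_{\bc,\epsilon}\setminus K_{\epsilon}$. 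The price of your more honest route is that its hardest step --- the simultaneous non-escape estimate in both blocks, plus the fact that $\cc$ is two-dimensional so you need two independent, jointly integrable lifts (for $\Re\Theta$ and $\Im\Theta$) rather than Milnor's single rotation field --- is announced but not executed; until that is written out you have an accurate programme rather than a finished proof. Still, the programme is the right one and engages the geometry of $\mathbb{S}^{4n-1}_{\bc,\epsilon}$ more directly than the paper's idempotent shortcut.
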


\begin{Rem}
    \normalfont
    One may consider the hyperbolic norm $\nm \cdot \nm_{\ku}$ instead of the complex one. This reduces to a product of the Milnor fibrations of $f_{1}$ and $f_{2}$ on the base space $\tr$. We shall see that, from the complex norm, we obtain new and interesting constructions. 
\end{Rem}

Theorem \ref{milnfibr} will be a consequence of the following three lemmas. 

\begin{Lem}\label{l1}
    The map $\varphi$ is a submersion.
\end{Lem}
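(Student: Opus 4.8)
The plan is to pass to the idempotent representation and reduce the statement to a Milnor-type transversality on the two complex factors. Writing $F = f_{1}\ebf + f_{2}\ebft$ and using that $\nm Z \nm_{\iu}^{2}=\lambda_{1}^{2}+\lambda_{2}^{2}=z_{1}z_{2}$, we get $\nm F(Z)\nm_{\iu}=\sqrt{f_{1}f_{2}}$, so in idempotent coordinates
\[
\varphi(Z)=\frac{f_{1}}{\sqrt{f_{1}f_{2}}}\,\ebf+\frac{f_{2}}{\sqrt{f_{1}f_{2}}}\,\ebft .
\]
Comparing with the idempotent coordinates $(e^{-\iu\Theta},e^{\iu\Theta})$ of a point $e^{\ju\Theta}\in\cc$ gives $e^{2\iu\Theta}=f_{2}/f_{1}$, that is,
\[
\Theta(Z)=\frac{1}{2\iu}\bigl(\log f_{2}(z_{2})-\log f_{1}(z_{1})\bigr),
\]
a locally defined $\co$-valued function on $\mathbb{S}^{4n-1}_{\bc,\epsilon}\setminus K_{\epsilon}$. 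Since $\varphi=e^{\ju\Theta}$ and $\tfrac{d}{d\Theta}e^{\ju\Theta}=\ju e^{\ju\Theta}$, where $\ju e^{\ju\Theta}\in\bc^{*}$ (because both $\ju$ and $e^{\ju\Theta}$ are non-zero-divisors), multiplication by $\ju e^{\ju\Theta}$ is a real-linear isomorphism of $\co$ onto $T_{\varphi(Z)}\cc$. Hence $\varphi$ is a submersion if and only if $d\Theta$, restricted to $T_{Z}\mathbb{S}^{4n-1}_{\bc,\epsilon}$, is surjective onto $\co$.

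Next I would compute this differential. By Corollary \ref{bicor1} the maps $f_{1},f_{2}$ are complex holomorphic in separable variables, so $d\log f_{j}=df_{j}/f_{j}$ is complex linear, and therefore $d\Theta=\tfrac{1}{2\iu}(d\log f_{2}-d\log f_{1})$ is a $\co$-linear functional on $\co^{2n}\simeq\bc^{n}$ whose Hermitian gradient is a nonzero complex multiple of $\mathbf{G}=(-\overline{\mathbf{v}_{1}},\overline{\mathbf{v}_{2}})$, where $\mathbf{v}_{j}=\nabla f_{j}/f_{j}$. Identifying $T_{Z}\mathbb{S}^{4n-1}_{\bc,\epsilon}$ with the real orthogonal complement of the position vector $\mathbf{Z}=(z_{1},z_{2})$, a short linear-algebra argument shows that a $\co$-linear functional with gradient $\mathbf{G}$ fails to be onto $\co$ on this hyperplane precisely when $\mathbf{G}=0$ or $\mathbf{Z}$ lies on the complex line $\co\cdot\mathbf{G}$; the latter reads
\[
z_{1}=-\nu\,\overline{\mathbf{v}_{1}},\qquad z_{2}=\nu\,\overline{\mathbf{v}_{2}},\qquad \nu\in\co^{*}.
\]
These are the bicomplex analogues of Milnor's critical-point equations, the essential novelty being that the proportionality factor $\nu$ is complex rather than a real scalar and couples the two idempotent blocks.

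It then remains to show that, for $\epsilon$ small, no point of $\mathbb{S}^{4n-1}_{\bc,\epsilon}\setminus K_{\epsilon}$ satisfies these equations, and here I would follow Milnor. Arguing by contradiction, if such critical points accumulated at $0$ the curve selection lemma would produce a real-analytic arc $\gamma(s)=(z_{1}(s),z_{2}(s))\to 0$, $s\in[0,\delta)$, of such points with $F(\gamma(s))\notin\zd$ for $s>0$, so that $f_{1},f_{2}\neq 0$ along the arc while $f_{1}(0)=f_{2}(0)=0$. Setting $C_{j}=\sum_{k}\overline{z_{j,k}}\,\dot{z}_{j,k}$, so that $\Re C_{j}=\tfrac12\tfrac{d}{ds}\nm z_{j}\nm^{2}$, the critical equations give $\dot{f}_{1}/f_{1}=-C_{1}/\overline{\nu}$ and $\dot{f}_{2}/f_{2}=C_{2}/\overline{\nu}$; eliminating $\nu$ yields the relation
\[
C_{2}\,\frac{\dot{f}_{1}}{f_{1}}+C_{1}\,\frac{\dot{f}_{2}}{f_{2}}=0 ,
\]
which together with $\nm z_{j}\nm^{2}=|\nu|^{2}\nm\mathbf{v}_{j}\nm^{2}$ forces incompatible signs on $\tfrac{d}{ds}\log\nm f_{j}\nm$ and $\tfrac{d}{ds}\nm\gamma(s)\nm^{2}$ as $\gamma(s)\to 0$ and $f_{j}(\gamma(s))\to 0$, contradicting the monotonicity extracted from the arc exactly as in Milnor's argument carried out on each factor.

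The step I expect to be the main obstacle is this last curve-selection argument. In the classical complex case the proportionality factor is a single real scalar and one tracks one function $\arg f$; here the complex factor $\nu$ binds $f_{1}$ and $f_{2}$ together, so the contradiction must be read off from the two idempotent components simultaneously, with careful bookkeeping of the signs of $\Re C_{1}$ and $\Re C_{2}$ along the arc. Everything preceding it—the reduction to the surjectivity of $d\Theta$, the gradient computation, and the description of the critical locus—is routine once the idempotent representation and the identity $\nm F\nm_{\iu}=\sqrt{f_{1}f_{2}}$ are in place.
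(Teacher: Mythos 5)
Your proposal is correct in outline but follows a genuinely different route from the paper's. The paper never introduces the angular function $\Theta$ at all: it factors $\varphi=\pi_{\iu}\circ\Phi\circ F$, observes that $\pi_{\iu}$ and the coordinate change $\Phi$ are submersions, and reduces the lemma to showing that $F=(f_{1},f_{2})$ restricted to $\mathbb{S}^{4n-1}_{\bc,\epsilon}\setminus K_{\epsilon}$ is a submersion onto $\bc^{*}$; this is then quoted from the known transversality of the fibres of the holomorphic functions $f_{1},f_{2}$ to small spheres (\cite[Corollary 3.11]{Cisneros2018}), so the whole proof is a few lines. You instead redo Milnor's gradient computation for $\Theta=\tfrac{1}{2\iu}(\log f_{2}-\log f_{1})$ and run the curve selection lemma on the resulting critical locus. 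Your preparatory steps are all correct: the identity $\nm F\nm_{\iu}=\sqrt{f_{1}f_{2}}$, the identification of $T\cc$ via multiplication by the unit $\ju e^{\ju\Theta}$, the $\co$-linearity of $d\Theta$ coming from Corollary \ref{bicor1}, and the characterization of the critical locus by $z_{1}=-\nu\,\overline{\mathbf{v}_{1}}$, $z_{2}=\nu\,\overline{\mathbf{v}_{2}}$. What your route buys is a self-contained argument with an explicit critical locus; what the paper's buys is brevity, since the cited transversality result already packages the curve-selection work.

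One caveat on your final step. The phrase ``incompatible signs'' suggests comparing real parts of $C_{j}$ and $\dot f_{j}/f_{j}$, and that alone does not close the argument: the relation $C_{2}\dot f_{1}/f_{1}+C_{1}\dot f_{2}/f_{2}=0$ involves products of complex numbers, and two factors with positive real part can multiply to something with negative real part. What does work---and is presumably what you intend by ``Milnor's argument carried out on each factor''---is the phase version of Milnor's lemma: along a real-analytic arc, Puiseux expansions give $\arg C_{j}(s)\to 0$ and $\arg\bigl(\dot f_{j}/f_{j}\bigr)(s)\to 0$ as $s\to 0^{+}$, so $\dot f_{1}/f_{1}=-C_{1}/\overline{\nu}$ forces $\arg\nu\to\pi$ while $\dot f_{2}/f_{2}=C_{2}/\overline{\nu}$ forces $\arg\nu\to 0$, a contradiction. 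With that substitution (and the routine checks that the critical locus is real-analytic after clearing the denominators $f_{1},f_{2}$, and that $z_{j}(s)\neq 0$ and $f_{j}(z_{j}(s))\neq 0$ for $s>0$ because the arc avoids $F^{-1}(\zd)$), your proof goes through.
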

    \begin{proof}
       The map $\varphi$ can be seen as the composition $\pi_{\iu} \circ \Phi \circ F(u,v)$, where $F$ is taken in the idempotent representation. Thus, it is enough to prove that 
        \begin{align*}
            F : \mathbb{S}_{\bc,\epsilon}^{4n-1}\setminus K_{\epsilon}& \longrightarrow \bc^{*}\\
                (z_{1},z_{2}) &\longmapsto f_{1}(z_{1})\ebf + f_{2}(z_{2})\ebft
        \end{align*}
        is a submersion. By \cite[Corollary 3.11]{Cisneros2018} the fibres of $f_{1}$ and $f_{2}$ are transversal to $\mathbb{S}_{\epsilon/\sqrt{2}}^{2n-1}$ for every sufficiently small $\epsilon$. More precisely, for $p = p_{1}\ebf + p_{2}\ebft \in \mathcal{V}$, one has
        \begin{align*}
            T_{p_{1}}f_{1}^{-1}(p_{1}) + T_{p_{1}}\mathbb{S}_{\epsilon/\sqrt{2}}^{2n-1} &= \co^{n}\ebf, \\
            T_{p_{2}}f_{2}^{-1}(p_{2}) + T_{p_{2}}\mathbb{S}_{\epsilon/\sqrt{2}}^{2n-1} &= \co^{n}\ebft.
        \end{align*}
        Since $$\left(T_{p_{1}}\mathbb{S}_{\epsilon/\sqrt{2}}^{2n-1}\right)\ebf + \left(T_{p_{2}}\mathbb{S}_{\epsilon/\sqrt{2}}^{2n-1}\right)\ebft\subset T_{p}\left(\mathbb{S}^{4n-1}_{\bc,\epsilon}\right),$$ the statement follows.       
    \end{proof}
From now on, we consider the vector field $\mathbb{V}$ in $\cc$ given by $\mathbb{V}(e^{\ju\Theta}) = \ju e^{\ju\Theta}$.
\begin{Lem}\label{l3}
    There exists a complete vector field $\mathbb{W}$ on $\mathbb{S}_{\epsilon}^{4n-1}\setminus K_{\epsilon}$ that projects to $\mathbb{V}$ by $\varphi$. 
\end{Lem}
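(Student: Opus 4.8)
The plan is to build $\mathbb{W}$ so that its flow acts on the value of $F$ by the bicomplex rotation $e^{\ju t}$; this single condition yields both the projection property and, crucially, the completeness. Concretely, at each $p \in \mathbb{S}_{\bc,\epsilon}^{4n-1}\setminus K_{\epsilon}$ I would look for a vector $\mathbb{W}(p)$ tangent to the sphere with
$$
dF_{p}\big(\mathbb{W}(p)\big) = \ju\, F(p).
$$
Such a vector exists: the proof of Lemma \ref{l1} shows that $F : \mathbb{S}_{\bc,\epsilon}^{4n-1}\setminus K_{\epsilon} \to \bc^{*}$ is a submersion, so $dF_{p}$ carries $T_{p}\mathbb{S}_{\bc,\epsilon}^{4n-1}$ onto $\bc$, and in particular $\ju F(p)$ lies in its image. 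Note this requirement is stronger than merely lifting $\mathbb{V}$, and it is exactly this extra rigidity that will control the flow.

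To make the choice smooth and canonical, I would fix a Riemannian metric on $\mathbb{S}_{\bc,\epsilon}^{4n-1}\setminus K_{\epsilon}$ and take $\mathbb{W}(p)$ to be the unique preimage of $\ju F(p)$ lying in the orthogonal complement of $\ker\big(dF_{p}|_{T_{p}\mathbb{S}_{\bc,\epsilon}^{4n-1}}\big)$; since $F$ is a submersion this is a smooth vector field (one may instead glue local smooth lifts by a partition of unity). By construction $\mathbb{W}$ is tangent to the sphere, so its flow preserves $\mathbb{S}_{\bc,\epsilon}^{4n-1}$.

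The projection property is then read off from the flow. Along an integral curve $p(t)$ one has $\tfrac{d}{dt}F(p(t)) = dF(\mathbb{W}) = \ju F(p(t))$, whence $F(p(t)) = e^{\ju t}F(p(0))$. Writing the trigonometric representation $F(p(0)) = \|F(p(0))\|_{\iu}\,e^{\ju\Theta_{0}}$ and using $e^{\ju t}e^{\ju\Theta_{0}} = e^{\ju(\Theta_{0}+t)}$, the radial factor $\|F\|_{\iu}$ is left unchanged, so
$$
\varphi(p(t)) = \frac{F(p(t))}{\|F(p(t))\|_{\iu}} = e^{\ju t}\,\varphi(p(0)).
$$
Differentiating gives $\tfrac{d}{dt}\varphi(p(t)) = \ju\,\varphi(p(t)) = \mathbb{V}(\varphi(p(t)))$, that is $d\varphi(\mathbb{W}) = \mathbb{V}$.

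The main obstacle is completeness, which can fail for vector fields on the non-compact manifold $\mathbb{S}_{\bc,\epsilon}^{4n-1}\setminus K_{\epsilon}$, since an integral curve might reach the link $K_{\epsilon}$ in finite time. Here the invariance of the complex norm rescues the argument. Setting $c_{0} = \|F(p(0))\|_{\iu} \in \co^{*}$, the computation above shows that each integral curve stays in the level set $\{\, Z \in \mathbb{S}_{\bc,\epsilon}^{4n-1} : \|F(Z)\|_{\iu} = c_{0}\,\}$. This set is closed in the compact sphere, hence compact, and since $c_{0} \neq 0$ it is disjoint from $K_{\epsilon} = F^{-1}(\zd)\cap \mathbb{S}_{\bc,\epsilon}^{4n-1}$; thus it is a compact subset of $\mathbb{S}_{\bc,\epsilon}^{4n-1}\setminus K_{\epsilon}$. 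An integral curve confined to a compact subset of the manifold is defined for all $t \in \re$, so $\mathbb{W}$ is complete, as required.
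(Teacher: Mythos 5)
Your proof is correct, but it follows a genuinely different route from the paper's. The paper exploits the idempotent splitting $F = f_{1}\ebf + f_{2}\ebft$ from Corollary \ref{bicor1}: it takes Milnor's classical vector fields $\mathbb{W}_{1}, \mathbb{W}_{2}$ associated with the spherical fibrations of $f_{1}/\nm f_{1}\nm$ and $f_{2}/\nm f_{2}\nm$ on $\mathbb{S}^{2n-1}_{\epsilon/\sqrt{2}}$ and glues them as $\mathbb{W} = \mathbb{W}_{1}\ebf + \mathbb{W}_{2}\ebft$, so that completeness is inherited componentwise from the complex case. You instead work intrinsically: you impose the single condition $dF_{p}(\mathbb{W}(p)) = \ju F(p)$, realize it by a horizontal lift relative to a metric (legitimate, since the proof of Lemma \ref{l1} shows $F$ restricted to $\mathbb{S}^{4n-1}_{\bc,\epsilon}\setminus K_{\epsilon}$ is a submersion onto $\bc^{*}$), and deduce completeness from the fact that the flow preserves $\nm F\nm_{\iu}$ and hence stays in a compact subset disjoint from $K_{\epsilon}$ (using that $F(Z)\in\zd$ if and only if $\nm F(Z)\nm_{\iu}=0$). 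What the paper's approach buys is brevity and a direct reduction to a known lemma; what yours buys is independence from the idempotent decomposition and a cleaner, quantitative completeness argument via compact invariant level sets, which would survive in settings where $F$ does not split. Two small points to tighten: the branch convention makes $\nm \cdot \nm_{\iu}$ discontinuous across the positive real axis, so to guarantee your invariant set is closed you should phrase it as a level set of the continuous polynomial $\nm F(Z)\nm_{\iu}^{2} = F(Z)\cjh{F}(Z)$, which is equally preserved since $\nm e^{\ju t}\nm_{\iu}^{2}=1$; and the identity $\varphi(p(t)) = e^{\ju t}\varphi(p(0))$ should be read modulo the same branch choice, which does not affect the infinitesimal statement $d\varphi(\mathbb{W}) = \mathbb{V}$.
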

    \begin{proof}
        Milnor proved the existence of a vector field $\mathbb{W}_{i}$ on $\mathbb{S}_{\epsilon/\sqrt{2}}^{2n-1}$ associated with the spherical fibration of $\Theta_{i} := f_{i}/\nm f_{i} \nm$, where $i = 1,2$ (see \cite[Lemma 3.14]{Cisneros2018}).  We shall see that $\mathbb{W}(z_{1},z_{2}) = \mathbb{W}_{1}(z_{1})\ebf + \mathbb{W}_{2}(z_{2})\ebft$ is the desired vector field. As before, since $\mathbb{W}_{i}$ is complete and tangent to $\mathbb{S}_{\epsilon/\sqrt{2}}^{2n-1}$, it follows that $\mathbb{W}$ is complete and tangent to $\mathbb{S}_{\epsilon/\sqrt{2}}^{2n-1}\ebf + \mathbb{S}_{\epsilon/\sqrt{2}}^{2n-1}\ebft$. Moreover, the integral curve $p_{i}(t)$ of $\mathbb{W}_{i}$ projects to the path $t \in \mathbb{S}^{1}$. Note that $p(t) = p_{1}(t)\ebf + p_{2}(t)\ebft$ is the integral curve of $\mathbb{W}$. Lastly, one can see that
        \begin{align*}
            \Theta\left(p(t)\right) &= \Phi \circ \left(\Theta_{1}(p_{1}(t)), \Theta_{2}(p_{2}(t)\right) = t
        \end{align*}
        for every small $t> 0$.
    \end{proof}
\begin{Lem}\label{l2}
    The map $\varphi$ is locally trivial.
\end{Lem}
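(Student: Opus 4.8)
The plan is to upgrade the submersion $\varphi$ of Lemma \ref{l1} to a locally trivial fibration by a hands-on Ehresmann integration, supplying complete lifts of a frame of $T\cc$ in \emph{both} base directions from the idempotent/product structure. First I would record the geometry of the base. Under the identification of $\cc$ with $\co^{*}$ sending $e^{\ju\Theta}$ to $e^{\iu\Theta}$, the field $\mathbb{V}(e^{\ju\Theta})=\ju e^{\ju\Theta}$ is the infinitesimal generator of the rotation flow $\psi_{t}\colon\Theta\mapsto\Theta+t$, whose orbits are the circles of constant modulus; thus one vector field alone covers only the angular direction, and since $\cc$ is two–dimensional a complementary \emph{radial} direction must also be handled. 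Fixing $c_{0}\in\cc$, I choose coordinates $(\alpha,\beta)$ on a neighbourhood $U\ni c_{0}$ with $\partial_{\alpha}=\mathbb{V}$ recording the phase, while a direct computation in the idempotent representation gives $\varphi(Z)=\sqrt{f_{2}/f_{1}}$, so that $\partial_{\beta}$ records the modulus ratio $\lVert f_{2}\rVert/\lVert f_{1}\rVert$.

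Lemma \ref{l3} already provides a complete lift $\mathbb{W}$ of $\mathbb{V}=\partial_{\alpha}$. The key remaining construction is a lift $\mathbb{W}'$ of the transverse field $\partial_{\beta}$, which I would build in the idempotent representation as $\mathbb{W}'=\mathbb{W}'_{1}\ebf+\mathbb{W}'_{2}\ebft$, where $\mathbb{W}'_{i}$ is the sphere–tangent gradient-type vector field of $\log\lVert f_{i}\rVert$ on $\mathbb{S}^{2n-1}_{\epsilon/\sqrt{2}}$ minus the link $K_{i}$ of $f_{i}$, furnished by Milnor's analysis in \cite{Cisneros2018}. Normalising so that $\log\bigl(\lVert f_{2}\rVert/\lVert f_{1}\rVert\bigr)$ moves at unit rate while the phase $\theta_{2}-\theta_{1}$ stays fixed, the field $\mathbb{W}'$ is $\varphi$-related to $\partial_{\beta}$ and, by the same argument as in Lemma \ref{l3}, is tangent to $\mathbb{S}_{\bc,\epsilon}^{4n-1}\setminus K_{\epsilon}$.

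Finally, writing $\Psi_{s}$ and $\Psi'_{s'}$ for the two flows and $\mathcal{F}_{0}=\varphi^{-1}(c_{0})$, I define $\Xi\colon U\times\mathcal{F}_{0}\to\varphi^{-1}(U)$ by $\Xi\bigl((\alpha,\beta),x\bigr)=\Psi_{\alpha}\bigl(\Psi'_{\beta}(x)\bigr)$. Since $\mathbb{W}$ and $\mathbb{W}'$ project to $\partial_{\alpha}$ and $\partial_{\beta}$, one has $\varphi\circ\Xi=\mathrm{pr}_{U}$, so $\Xi$ commutes with $\varphi$; its differential is an isomorphism because the two projected directions frame $T_{c_{0}}\cc$ while the $\mathcal{F}_{0}$-factor maps onto the fibre, and $\Xi$ is inverted by running the flows backwards, which yields the desired local trivialization. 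The main obstacle is precisely the construction of $\mathbb{W}'$ together with the control that the combined flow remains in $\mathbb{S}_{\bc,\epsilon}^{4n-1}\setminus K_{\epsilon}$ for the times needed to sweep $\varphi^{-1}(U)$ out of $\mathcal{F}_{0}$: because $\varphi$ is \emph{not} proper near the locus $F^{-1}(0)\cap\mathbb{S}_{\bc,\epsilon}^{4n-1}$, a naive appeal to Ehresmann's theorem is unavailable, and it is exactly the completeness of the Milnor fields $\mathbb{W}_{i}$ and $\mathbb{W}'_{i}$ on $\mathbb{S}^{2n-1}_{\epsilon/\sqrt{2}}\setminus K_{i}$ — which forbids trajectories from reaching the link in finite time — that makes $\Xi$ well defined and a diffeomorphism.
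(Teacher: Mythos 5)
Your overall strategy coincides with the paper's: both proofs build the local trivialization by integrating lifted vector fields, starting from the lift $\mathbb{W}$ of $\mathbb{V}$ supplied by Lemma \ref{l3}. In fact you go further than the paper in one important respect: you observe explicitly that $\cc$ is two-dimensional, so that a single lifted field only sweeps out the phase direction and a second, transverse lift is indispensable. The paper's own formula $H_{\Re(\Theta)} \circ H_{\Im(\Theta)}$ uses the flow of the one field $\mathbb{W}$ twice, which as written parametrizes only a one-parameter family; your insistence on a genuinely independent field $\mathbb{W}'$ covering the modulus direction of $f_{2}/f_{1}$ is the right diagnosis of what a complete argument must contain.

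The gap is in your construction of $\mathbb{W}'$. A field on $\mathbb{S}^{2n-1}_{\epsilon/\sqrt{2}} \setminus K_{i}$ along which $\log \nm f_{i} \nm$ moves at a fixed nonzero rate cannot exist globally: it would have to vanish wherever the sphere-tangent gradient of $\log\nm f_{i}\nm$ vanishes (for instance at an extremum of $\nm f_{i}\nm$ on the sphere), and if it did exist its forward flow could not leave the open set (your own completeness argument) while forcing $\log\nm f_{2}\nm \to +\infty$, contradicting boundedness of $\nm f_{2}\nm$ on the compact sphere. So $\mathbb{W}'$ as described is not furnished by Milnor's analysis, which only guarantees nonvanishing of these gradients in a neighbourhood of the link, and your appeal to its completeness is vacuous. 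The repair is to prescribe only the quantity that $\varphi$ actually sees, namely $d\log(\nm f_{2}\nm/\nm f_{1}\nm)(\mathbb{W}') = 1$ and constancy of the relative phase, while merely \emph{bounding} the individual rates $d\log\nm f_{i}\nm(\mathbb{W}')$; such a lift exists locally by Lemma \ref{l1} and can be patched by a partition of unity, the bound on the individual rates (obtained from Milnor's curve-selection estimates near $K_{\epsilon}$, and trivially away from it where $\varphi$ is proper) then guarantees that trajectories starting on $\varphi^{-1}(c_{0})$ persist for a uniform time $\delta$ without reaching $K_{\epsilon}$, which is all that local triviality requires. With that modification your map $\Xi$ does give the trivialization.
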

    \begin{proof}
         On a sufficiently small neighbourhood in $\cc$, we identify $e^{\ju\Theta} \simeq \Theta \in \co$. By Lemma \ref{l3},  there exists a lifting $\mathbb{W}$ in $\mathbb{S}_{\bc,\epsilon}^{4n-1}\setminus K_{\epsilon}$ of $\mathbb{V}$ by the map $\varphi$ that is also complete. Let us denote by $H_{t}(Z)$ the integral curve of $\mathbb{W}$ passing through $Z$. The following map defines a local trivialization for $\varphi$:
        \begin{align*}
            \mathcal{U} \times \varphi^{-1}(\Theta_{0}) & \longrightarrow \varphi^{-1}(\mathcal{U}) \\
            \left(\Re(\Theta+\Theta_{0}), \Im(\Theta+\Theta_{0}), Z\right) &\longmapsto H_{\Re(\Theta)} \circ H_{\Im(\Theta)} \circ Z,
        \end{align*}
        where $\mathcal{U} = \{ \Theta \in \cc : \nm \Theta + \Theta_{0}\nm < \delta\}$ is a small neighbourhood of $\Theta_{0}$. 
    \end{proof}

Observe that, in general, we cannot decompose $\Theta(Z)$ as $\Phi \circ (\Theta_{1}\ebf+\Theta_{2}\ebft)$. We discuss now some topological properties of $\varphi$. First, notice that,
\begin{align*}
    K_{\epsilon} = \left\{ Z \in \bc^n : \nm Z \nm_{\bc} = \epsilon, \; F(Z) \notin \bc^{*}\right\}.
\end{align*}
Recall that the map $F$ has the following idempotent representation 
\begin{align*}
    F(z_{1},z_{2}) = f_{1}(z_{1})\ebf + f_{2}(z_{2})\ebft,
\end{align*}
Thus, $K_{\epsilon}$ can be identified with the link of the holomorphic function $f(z_{1},z_{2}): \co^{2n} \longrightarrow \co$ given by $f(z_{1},z_{2}) = f_{1}(z_{1})f_{2}(z_{2})$. This gives the following.
\begin{Cor}[\cite{Milnor1968}, Theorem 5.2]
    The bicomplex link $K_{\epsilon}$ is the link of the holomorphic function $f(z_{1},z_{2})$ and thus it is $(2n-2)$-connected.
\end{Cor}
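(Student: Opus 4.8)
The plan is to reduce the statement to Milnor's classical result by identifying $K_{\epsilon}$ with an honest complex hypersurface link. First I would unwind the two defining conditions of $K_{\epsilon}$ in the idempotent coordinates $Z = z_{1}\ebf + z_{2}\ebft$. By Corollary \ref{bicor1} the map $F$ reads $F(z_{1},z_{2}) = f_{1}(z_{1})\ebf + f_{2}(z_{2})\ebft$ with $f_{1}, f_{2} : \co^{n} \to \co$ holomorphic, and by the description of $\bc^{*}$ a bicomplex number $w_{1}\ebf + w_{2}\ebft$ lies in $\zd$ precisely when $w_{1} = 0$ or $w_{2} = 0$. Hence $F(Z) \in \zd$ if and only if $f_{1}(z_{1}) = 0$ or $f_{2}(z_{2}) = 0$, i.e. if and only if the holomorphic function $f(z_{1},z_{2}) := f_{1}(z_{1})f_{2}(z_{2})$ on $\co^{n} \times \co^{n} \simeq \co^{2n}$ vanishes at $(z_{1},z_{2})$. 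Therefore $F^{-1}(\zd) = f^{-1}(0)$ as subsets of $\bc^{n} \simeq \co^{2n}$.

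Next I would check that the ambient sphere is the standard one. Since $\nm Z \nm_{\bc}^{2} = \nm z_{1} \nm^{2} + \nm z_{2} \nm^{2}$ is exactly the squared Euclidean norm of $(z_{1},z_{2}) \in \co^{2n} \simeq \re^{4n}$, the bicomplex sphere $\mathbb{S}_{\bc,\epsilon}^{4n-1}$ is the ordinary Euclidean sphere of radius $\epsilon$ about the origin in $\co^{2n}$. Combining this with the previous paragraph gives
\[
K_{\epsilon} = F^{-1}(\zd) \cap \mathbb{S}_{\bc,\epsilon}^{4n-1} = f^{-1}(0) \cap \mathbb{S}_{\epsilon}^{4n-1},
\]
which is by definition the link at the origin of the complex hypersurface $f^{-1}(0) \subset \co^{2n}$ cut out by the holomorphic germ $f = f_{1}f_{2}$.

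With the identification in hand, the conclusion is immediate: for all sufficiently small $\epsilon > 0$ the local conic structure of $f^{-1}(0)$ guarantees that $K_{\epsilon}$ realizes the link of $f^{-1}(0)$, and Milnor's Theorem 5.2 in \cite{Milnor1968}, applied to a hypersurface in $\co^{2n}$, yields that this link is $(2n-2)$-connected.

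The substance of the argument is the first step, namely the clean translation between the bicomplex data $(F, \zd, \nm\cdot\nm_{\bc})$ and the complex data $(f, 0, \text{Euclidean norm})$, rather than any delicate estimate. The point requiring the most care is that $f = f_{1}f_{2}$ is a genuine holomorphic germ to which Milnor's connectivity bound applies for small $\epsilon$; this is where I would verify that choosing $\epsilon$ below the Milnor radius of each of $f_{1}$ and $f_{2}$ (which exist by the Bertini-Sard argument already invoked) makes $\epsilon$ admissible for the product $f$ as well, so that the stated $(2n-2)$-connectivity follows.
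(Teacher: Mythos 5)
Your argument is correct and is essentially the paper's own: the corollary is obtained precisely by observing that $F(Z)\in\zd$ if and only if $f_{1}(z_{1})f_{2}(z_{2})=0$ and that $\nm \cdot \nm_{\bc}$ is the Euclidean norm on $\co^{2n}$, so that $K_{\epsilon}$ is the link of the holomorphic germ $f=f_{1}f_{2}$, to which Milnor's Theorem 5.2 is then applied. The only difference is that you spell out the identification and the choice of an admissible $\epsilon$, which the paper leaves implicit in the sentence preceding the corollary.
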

In addition, a typical fiber is described by
\begin{align*}
    \varphi^{-1}(1) = \left\{ Z \in \bc^n : \nm Z \nm_{\bc} = \epsilon, \; F(Z) \in \bc^{*}, F(Z) = \nm Z \nm_{\iu}\right\}.
\end{align*}
Consider the following holomorphic functions on $\mathcal{U}$:
\begin{align*}
    h_{1}(z_{1},z_{2}) &= f_{1}(z_{1}) + f_{2}(z_{2}), \\
    h_{2}(z_{1},z_{2}) &= f_{1}(z_{1}) - f_{2}(z_{2}).
\end{align*}
Then $h_{2}(z_{1},z_{2}) \equiv 0$ and $h_{1}(z_{1},z_{2}) \neq 0$ for all points in $\varphi^{-1}(1)$. Equivalently, $\varphi^{-1}(1) = K_{2,\epsilon} \setminus K_{1,\epsilon}$, where $K_{i,\epsilon}$ is the usual link of the holomorphic function $h_{i}(z_{1},z_{2})$, for $i=1,2$. 

\begin{Cor}
    Assume further that the holomorphic functions $h_{1}$ and $h_{2}$ have isolated singularity at the origin and have no common irreducible components. Then the fundamental group of $\varphi^{-1}(1)$ is isomorphic with $\mathbb{Z}$ and the other homotopy groups are the same as those of a bouquet of spheres of dimension $2n-2$.   
\end{Cor}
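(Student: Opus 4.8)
The plan is to read the identity $\varphi^{-1}(1)=K_{2,\epsilon}\setminus K_{1,\epsilon}$, already established above, as exhibiting $\varphi^{-1}(1)$ as the total space of a Milnor fibration living on a singular space, and then to feed a bouquet theorem into the homotopy exact sequence of that fibration. Write $V_{2}=h_{2}^{-1}(0)\subset\co^{2n}$. Since $h_{2}$ has an isolated singularity at $0$, the germ $V_{2}$ is a hypersurface (hence an isolated complete intersection singularity) of complex dimension $2n-1$, whose link $K_{2,\epsilon}$ is smooth and $(2n-2)$-connected, exactly as in Milnor's theorem. On $V_{2}$ I restrict $h_{1}$: its zero locus on the link is $K_{1,\epsilon}\cap K_{2,\epsilon}$, so $\varphi^{-1}(1)=K_{2,\epsilon}\setminus K_{1,\epsilon}$ is precisely the complement inside the link $K_{2,\epsilon}$ of the link of the restricted germ $h_{1}|_{V_{2}}$.

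First I would check that the Milnor map $h_{1}/\lvert h_{1}\rvert\colon K_{2,\epsilon}\setminus K_{1,\epsilon}\to\mathbb{S}^{1}$ is a locally trivial fibration and that, after identifying a small arc of $\cc$ with $\mathbb{S}^{1}$, it coincides with the restriction of $\varphi$ (on $\varphi^{-1}(1)$ one has $h_{1}=2f_{1}$, so the two argument maps agree). The submersivity needed here is already furnished by Lemma \ref{l1}, so this step is bookkeeping together with the integrating-vector-field argument used in Lemmas \ref{l3} and \ref{l2}. The fibre $\mathbb{F}$ of this fibration is the Milnor fibre of the germ $h_{1}|_{V_{2}}$ at the origin. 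The decisive input is then a bouquet theorem: because $V_{2}$ is a $(2n-1)$-dimensional isolated complete intersection singularity and $h_{1}|_{V_{2}}$ has (as I must verify) an isolated singularity at $0$, the theorems of Hamm and L\^e on Milnor fibres of functions on an ICIS give that $\mathbb{F}$ is homotopy equivalent to a bouquet of spheres of real dimension $\dim_{\co}V_{2}-1=2n-2$.

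With the fibration $\mathbb{F}\hookrightarrow\varphi^{-1}(1)\to\mathbb{S}^{1}$ in hand the conclusion is formal. As $2n-2\geq 2$, the fibre $\mathbb{F}\simeq\bigvee\mathbb{S}^{2n-2}$ is simply connected, so the homotopy long exact sequence forces $\pi_{1}(\varphi^{-1}(1))\cong\pi_{1}(\mathbb{S}^{1})\cong\mathbb{Z}$; and for $k\geq 2$ the vanishing of $\pi_{k}(\mathbb{S}^{1})$ and $\pi_{k+1}(\mathbb{S}^{1})$ yields $\pi_{k}(\varphi^{-1}(1))\cong\pi_{k}(\mathbb{F})$. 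Equivalently, the universal cover of $\varphi^{-1}(1)$ is the pullback of the fibration along $\re\to\mathbb{S}^{1}$ and is therefore homotopy equivalent to $\mathbb{F}$, which is the asserted homotopy type.

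I expect the genuine obstacle to be the verification that $h_{1}|_{V_{2}}$ has an isolated singularity at the origin, which is the hypothesis under which the bouquet theorem produces a single-dimensional wedge. This is exactly where the assumption that $h_{1}$ and $h_{2}$ share no common irreducible component must be used, forcing $V_{1}\cap V_{2}$ to have the expected dimension and controlling the relative critical set of $h_{1}|_{V_{2}}$. Since $h_{1}=f_{1}+f_{2}$ and $h_{2}=f_{1}-f_{2}$ are written in separated variables, this critical set can be computed explicitly from $df_{1}$ and $df_{2}$, and one sees that it is contained in $\{h_{1}=0\}$ (it runs along the cones $\{f_{1}=0\}\times\{0\}$ and $\{0\}\times\{f_{2}=0\}$); this is consistent with $\varphi$ being a submersion, but it also means the relative singularity is only isolated away from the deleted set $K_{1,\epsilon}$. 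Arguing that this boundary locus does not disturb the homotopy type of $\mathbb{F}$, so that the wedge-of-$(2n-2)$-spheres conclusion genuinely holds, is the subtle point on which the whole statement rests and where the no-common-component hypothesis must be brought to bear with the greatest care.
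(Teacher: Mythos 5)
Your route is the same one the paper takes: identify $\varphi^{-1}(1)$ with $K_{2,\epsilon}\setminus K_{1,\epsilon}$, view this as the total space of the Milnor--L\^e fibration of $h_{1}$ restricted to $V_{2}=h_{2}^{-1}(0)$ over the circle, feed a bouquet theorem for the fibre into the homotopy exact sequence. (The paper cites L\^e's fibration theorem and Looijenga \S 5.7--5.8 where you cite Hamm--L\^e; these are the same inputs.) So there is no methodological divergence to report.

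However, the worry you isolate in your final paragraph is not a loose end to be tidied up: it is a genuine gap, and it is present in the paper's own proof as well. Because $h_{1}=f_{1}(z_{1})+f_{2}(z_{2})$ and $h_{2}=f_{1}(z_{1})-f_{2}(z_{2})$ are built from functions on separated variables, the fibre of the circle fibration is
\[
h_{1}^{-1}(\delta)\cap h_{2}^{-1}(0)\cap \mathbb{B}_{\epsilon}=\{f_{1}(z_{1})=f_{2}(z_{2})=\delta/2\}\cap \mathbb{B}_{\epsilon}\simeq F_{1}\times F_{2},
\]
the \emph{product} of the Milnor fibres of $f_{1}$ and $f_{2}$, each a bouquet of $(n-1)$-spheres. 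Correspondingly $V_{1}\cap V_{2}=\{f_{1}=0\}\times\{f_{2}=0\}$ has singular locus $(\{0\}\times\{f_{2}=0\})\cup(\{f_{1}=0\}\times\{0\})$ of dimension $n-1>0$, so the pair $(h_{2},h_{1})$ is not an ICIS and the Hamm--L\^e bouquet theorem you need simply does not apply --- and its conclusion is false here, since a product of bouquets of $(n-1)$-spheres is not a bouquet of $(2n-2)$-spheres. Concretely, for $n=2$ and $f_{i}=z_{i1}^{2}+z_{i2}^{2}$ all hypotheses of the corollary hold ($h_{1},h_{2}$ are nondegenerate quadrics with no common component), yet the fibre is $\mathbb{S}^{1}\times\mathbb{S}^{1}$, whose $\pi_{1}=\mathbb{Z}^{2}$ already rules out $\pi_{1}(\varphi^{-1}(1))\cong\mathbb{Z}$; for $n\geq 3$ the fibre is simply connected but has nonvanishing $\pi_{n-1}$, which a bouquet of $(2n-2)$-spheres does not. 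So the step ``the fibre is a bouquet of spheres of dimension $2n-2$'' fails, and with it the deduction of the corollary by this method; any correct statement must be phrased in terms of $F_{1}\times F_{2}$, consistently with the product/join phenomena appearing in the examples of the final section.
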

    \begin{proof}
       By the works of Lê, see \cite{Cisneros2009} for instance, the maps
       \begin{align*}
           \frac{h_{1}}{\nm h_{1}\nm} :& K_{2,\epsilon}\setminus K_{1,\epsilon} \longrightarrow \ci \\
           h_{1} :& h_{2}^{-1}(0) \cap h_{1}^{-1}(\partial \mathbb{D}_{\delta}) \cap \mathring{\mathbb{B}}_{\epsilon}^{4n} \longrightarrow \partial \mathbb{D}_{\delta}
       \end{align*}
       are isomorphic locally trivial fibrations for $\epsilon,\delta>0$ sufficiently small, where $ \mathbb{D}_{\delta}$ is the closed disk of radius $\delta > 0$ in $\co$ and $\mathring{\mathbb{B}}_{\epsilon}^{4n}$ is the open ball of radius $\epsilon > 0$ in $\bc^{n}$, both centered at origin. Furthermore, it is well-known that the homotopy type of these fibers is that of a bouquet of spheres of dimension $2n-2$, by \cite[\S 5.7 and 5.8]{Looijenga2013}. The conclusion follows by applying the exact sequence relating the homotopy groups of the total, base, and fiber spaces of a fibration.
    \end{proof}

\begin{The}[Tube fibrations] \hfill 
    \begin{enumerate}
        \item There exists $\epsilon > 0$ and $\delta = \delta(\epsilon) > 0$ such that the restriction
             \begin{align}\label{milnfibtube1}
                 F : \mathcal{N}(\epsilon,\delta) \longrightarrow \bc^{*} \cap \mathbb{B}^{4}_{\delta}
             \end{align}
             is a locally trivial fibration, where $\mathcal{N}(\epsilon,\delta) = \mathbb{B}_{\epsilon}^{4n} \cap F^{-1}(\bc^{*}\cap \mathbb{B}^{4}_{\delta})$ and $\mathbb{B}_{\epsilon}^{4n}, \mathbb{B}^{4}_{\delta}$ are closed balls centered at the origin in $\bc^{n}, \bc$ with radius $\epsilon, \delta>0$, respectively. 
        \item There exists $\epsilon > 0$ and $\delta = \delta(\epsilon) > 0$ such that the restriction
             \begin{equation}\label{milnfibtube2}
                 F : \mathcal{N}_{\iu}(\epsilon,\delta) \longrightarrow \cc
             \end{equation}
             is a locally trivial fibration, where $\mathcal{N}_{\iu}(\epsilon,\delta) = \mathbb{B}_{\epsilon}^{4n} \cap F^{-1}(\cc)$ and $\mathbb{B}_{\epsilon}^{4n}$ is the closed ball centered at the origin in $\bc^{n}$ with radius $\epsilon > 0$.
    \end{enumerate}
\end{The}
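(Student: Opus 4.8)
The plan is to reduce both statements, via the idempotent representation $F = f_{1}\ebf + f_{2}\ebft$ of Corollary \ref{bicor1}, to the submersivity and transversality properties already exploited in Lemma \ref{l1}, and then to invoke Ehresmann's fibration theorem for manifolds with corners. Throughout I identify $\bc^{n}\simeq\co^{n}\times\co^{n}$ and $\bc\simeq\co\times\co$ by idempotent coordinates, so that $F$ becomes the product germ $(z_{1},z_{2})\mapsto (f_{1}(z_{1}),f_{2}(z_{2}))$, the non-zero-divisors $\bc^{*}$ become $\{(w_{1},w_{2}): w_{1}\neq 0,\ w_{2}\neq 0\}$, and the Euclidean ball $\mathbb{B}^{4n}_{\epsilon}$ becomes the round ball $\{\nm z_{1}\nm^{2}+\nm z_{2}\nm^{2}\le 2\epsilon^{2}\}$ (recall that $\nm\cdot\nm_{\bc}=\sqrt{2}\,\nm\cdot\nm$).

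For part (1), first I would fix a Milnor radius $\epsilon$ for $F$: since $f_{1}$ and $f_{2}$ have $0$ as isolated critical value on $\mathcal{V}$, any $Z\in F^{-1}(\bc^{*})\cap\mathbb{B}^{4n}_{\epsilon}$ is a regular point of $F$, because $DF=(Df_{1})\ebf+(Df_{2})\ebft$ has bicomplex rank $(1,1)$ exactly when $f_{1}$ and $f_{2}$ do not vanish; thus $F$ is a submersion on $\mathcal{N}(\epsilon,\delta)$, which is Lemma \ref{l1} read in the solid-tube setting. Next, by \cite[Corollary 3.11]{Cisneros2018} the fibres of $f_{1}$ and $f_{2}$ meet the spheres $\mathbb{S}^{2n-1}_{\epsilon/\sqrt{2}}$ transversally for all small radii, so the fibres of $F$ meet $\mathbb{S}^{4n-1}_{\bc,\epsilon}$ transversally; this is the Milnor transversality needed to control the boundary. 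Choosing $\delta=\delta(\epsilon)$ small, $F\colon\mathcal{N}(\epsilon,\delta)\to\bc^{*}\cap\mathbb{B}^{4}_{\delta}$ is then a submersion between manifolds with corners whose two boundary faces, $\mathbb{S}^{4n-1}_{\bc,\epsilon}\cap\mathcal{N}$ and $F^{-1}(\partial\mathbb{B}^{4}_{\delta})\cap\mathcal{N}$, are each mapped submersively onto the corresponding face of the base. Finally, for any compact $C\subset\bc^{*}\cap\mathbb{B}^{4}_{\delta}$ the set $F^{-1}(C)\cap\mathbb{B}^{4n}_{\epsilon}$ is closed in the compact ball and disjoint from the origin, hence compact, so $F$ is proper; Ehresmann's theorem with corners then yields local triviality. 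Equivalently, one may lift a local frame of $\bc^{*}\cap\mathbb{B}^{4}_{\delta}$ to complete vector fields on $\mathcal{N}(\epsilon,\delta)$ exactly as in Lemmas \ref{l3} and \ref{l2}, integrating them to a trivialization.

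For part (2), I would deduce the spherical tube fibration from part (1) together with the projection $\pi_{\iu}\colon\bc^{*}\to\cc$ of \eqref{prj1}. The complex sphere $\{W: \nm W\nm_{\iu}=\delta\}$ is a boundary face of the solid tube $\bc^{*}\cap\mathbb{B}^{4}_{\delta}$ and is carried onto $\cc$ by $\pi_{\iu}$ up to the scaling $\nm\cdot\nm_{\iu}$; restricting the fibration of part (1) to the preimage of this face then gives the locally trivial fibration $F\colon\mathcal{N}_{\iu}(\epsilon,\delta)\to\cc$. This restriction is legitimate precisely because the transversality established above guarantees that $F$ is transverse to that boundary face. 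Alternatively, this fibration is the tube avatar of the spherical fibration $\varphi$ of Theorem \ref{milnfibr}, and the two are identified by the standard isotopy inflating $\mathbb{S}^{4n-1}_{\bc,\epsilon}\setminus K_{\epsilon}$ to the tube along the flow used in Lemma \ref{l2}.

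The main obstacle I anticipate is the corner structure: because $\mathcal{N}(\epsilon,\delta)$ is cut out simultaneously by the sphere $\mathbb{S}^{4n-1}_{\bc,\epsilon}$ and by $F^{-1}(\partial\mathbb{B}^{4}_{\delta})$, applying Ehresmann requires that $F$ be transverse not only to each boundary face but also, compatibly, to their common corner, and that the lifted vector fields be at once tangent to the sphere and correctly projected onto the base. Verifying this compatibility, equivalently the independence of the Milnor transversality condition from the choice of $\delta\ll\epsilon$, is the delicate analytic point, and it is exactly where the idempotent reduction to the two complex germs $f_{1},f_{2}$ and the quoted results of \cite{Cisneros2018} and \cite{Milnor1968} do the real work.
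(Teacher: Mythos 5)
Part (1) of your proposal is essentially the paper's argument: the paper also reduces to the idempotent pair $(f_{1},f_{2})$, gets submersivity on the tube from the $a_{f}$-property/transversality of the fibres of $f_{1},f_{2}$ to the small spheres exactly as in Lemma~\ref{l1}, and concludes with the relative Ehresmann fibration theorem. Your extra remarks on properness and the corner structure are correct elaborations of what the paper leaves implicit.

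Part (2), however, contains a genuine error. You derive \eqref{milnfibtube2} from \eqref{milnfibtube1} by claiming that the complex sphere $\{W:\nm W\nm_{\iu}=\delta\}$ is a boundary face of the solid tube $\bc^{*}\cap\mathbb{B}^{4}_{\delta}$ and restricting the fibration to the preimage of that face. It is not a boundary face: since $\nm W\nm^{2}=|\lambda_{1}|^{2}+|\lambda_{2}|^{2}\ge|\lambda_{1}^{2}+\lambda_{2}^{2}|=|\nm W\nm_{\iu}^{2}|$, a level set of $\nm\cdot\nm_{\iu}$ is a non-compact complex quadric of real codimension $2$ in $\bc\simeq\re^{4}$ which meets the closed ball $\mathbb{B}^{4}_{\delta}$ only along the thin locus where the triangle inequality is an equality; it is not a hypersurface bounding the tube. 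Concretely, $\mathcal{N}_{\iu}(\epsilon,\delta)=\mathbb{B}^{4n}_{\epsilon}\cap F^{-1}(\cc)$ has dimension $4n-2$, whereas any boundary face of $\mathcal{N}(\epsilon,\delta)$ has dimension $4n-1$, so no restriction of the part-(1) fibration to a face can produce it. For the same dimensional reason your alternative claim that \eqref{milnfibtube2} is ``the tube avatar'' of the spherical fibration \eqref{milnfib} and is identified with it by inflating the sphere is explicitly refuted by Remark~\ref{rem} of the paper: the two total spaces have different dimensions and the fibrations cannot be equivalent. The paper's route for (2) is instead to run the \emph{same} direct argument as for (1) on the composition $\pi_{\iu}\circ\Phi\circ F$ (with $\pi_{\iu}$ as in \eqref{prj1}): this composition is a submersion on the tube by the transversality already established, the source $\mathbb{B}^{4n}_{\epsilon}\cap F^{-1}(\cc)$ is compact so the map is proper, and the relative Ehresmann theorem applies. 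Your part (2) should be rewritten along those lines rather than as a restriction of part (1).
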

        \begin{proof}
            The first map is a submersion by the $a_{f}$-property of the holomorphic functions $f_{1}(z_{1})$ and $f_{2}(z_{2})$ as we argued in Lemma \ref{l1}. The same is applied to the second map, where we consider the decomposition $\pi_{\iu} \circ \Phi \circ F$. The conclusion follows from the relative Ehresmann fibration theorem.
        \end{proof}

\begin{Rem}\label{rem}
    Notice that the total spaces of \eqref{milnfib} and \eqref{milnfibtube2} have distinct dimensions. Therefore, the tube and the spherical bicomplex fibrations cannot be equivalent, which contrasts with the complex case. 
\end{Rem}

\section{Mixed polynomials}\label{s4}
In this section, we introduce the notion of mixed polynomials in bicomplex variables and the notion of polar weighted homogeneity. We present basic properties and examples motivated by the case of mixed polynomials on complex variables studied in \cite{Cisneros2008} and \cite{Oka2008}.

A \textit{bicomplex mixed monomial in a variable} $Z_{i} \in \bc$ is bicomplex valued-function $\bc \longrightarrow \bc$ of the form
\begin{align*}
    Z_{i}(a_{i}, b_{i}, c_{i},d_{i}) := Z_{i}^{a_{i}}\cjt{Z}_{i}^{b_{i}}\cjh{Z}_{i}^{c_{i}}\cj{Z}_{i}^{d_{i}},
\end{align*}
where $a_{i}, b_{i}, c_{i},d_{i}$ are non-negative integers. Moreover, a \textit{bicomplex mixed monomial} is a bicomplex valued-function $\bc^{n} \longrightarrow \bc$ of the form
\begin{align*}
    Z(\nu, \mu_{1}, \mu_{2},\mu_{3}) := \prod_{i=1}^{n}Z_{i}(\nu^{i}, \mu_{1}^{i}, \mu_{2}^{i}, \mu_{3}^{i}),
\end{align*}
where $\nu = (\nu^{i})$, $\mu_{j} = (\mu_{j}^{i})$ are vectors of non-negative integers, for $j = 1,2,3$. Lastly, a \textit{bicomplex mixed polynomial} is a finite sum of the form
\begin{align*}
    F(Z) = \sum_{\nu,\mu}\lambda_{\nu,\mu}Z(\nu,\mu_{1},\mu_{2},\mu_{3}),
\end{align*}
where $\lambda_{\nu,\mu} \in \bc\setminus\{0\}$.

For simplicity, we shall denote $F(Z)$ by $F(Z,\cjt{Z}), F(Z,\cjh{Z}),$ and $F(Z,\cj{Z})$ if $F$ depends only on the respective variables and we refer to it by tilde, hat, or bar-mixed polynomials, respectively. A \textit{singular point} of a bicomplex mixed polynomial is a singular point of the associated real polynomial map $\re^{4n} \longrightarrow \re^{4}$.

    \subsection{Idempotent representations}
    
    The idempotent representation of bicomplex numbers allows us to reduce mixed polynomials to complex maps whose coordinates are certain polynomials of the same type as $F(Z)$.
    
    \begin{Prop}\label{p21}
        Let $F(Z) = \sum_{\nu,\mu}\lambda_{\nu,\mu}Z(\nu,\mu_{1},\mu_{2},\mu_{3})$ be a bicomplex mixed polynomial, where $\lambda_{\nu,\mu} = \lambda_{\nu,\mu}^{1}\ebf + \lambda^{2}_{\nu,\mu}\ebft$ for each multi-index $\nu,\mu$. Then, up to a linear change of coordinates, $F : \co^{2n} \longrightarrow \co^{2}$ has the following form:
        \begin{align*}
            F(z_{1},\bar{z}_{1}, z_{2}, \bar{z}_{2}) = \sum_{\nu,\mu}\lambda_{\nu,\mu}^{1}z_{1}^{\nu}\bar{z}_{1}^{\mu_{1}}z_{2}^{\mu_{2}}\bar{z}_{2}^{\mu_{3}}\ebf + \sum_{\nu,\mu}\lambda^{2}_{\nu,\mu}z_{2}^{\nu}\bar{z}_{2}^{\mu_{1}}z_{1}^{\mu_{2}}\bar{z}_{1}^{\mu_{3}}\ebft.
        \end{align*}
        We shall denote $F(z_{1},z_{2}) = (f_{1}(z_{1},z_{2}), f_{2}(z_{1},z_{2}))$ and call it the idempotent representation of $F(Z)$.
    \end{Prop}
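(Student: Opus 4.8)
The plan is to reduce the statement to the component-wise behaviour of the idempotent decomposition. The essential input is the idempotent representation of the four quantities appearing in a mixed monomial, which can be read directly off equations \eqref{cjt}, \eqref{cjh} and \eqref{cj}: writing $Z_{i} = z_{1,i}\ebf + z_{2,i}\ebft$, where $z_{1,i},z_{2,i}\in\co$ are the $i$-th coordinates of $z_{1},z_{2}\in\co^{n}$, one obtains
\begin{align*}
    Z_{i} = z_{1,i}\ebf + z_{2,i}\ebft, &\quad \cjt{Z}_{i} = \bar{z}_{1,i}\ebf + \bar{z}_{2,i}\ebft, \\
    \cjh{Z}_{i} = z_{2,i}\ebf + z_{1,i}\ebft, &\quad \cj{Z}_{i} = \bar{z}_{2,i}\ebf + \bar{z}_{1,i}\ebft.
\end{align*}
Thus the hat conjugation swaps the two idempotent components, the tilde conjugates each of them, and the bar does both.

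First I would record the multiplicativity of the idempotent representation: since $\ebf^{2}=\ebf$, $(\ebft)^{2}=\ebft$ and $\ebf\cdot\ebft=0$, the product of two bicomplex numbers is obtained by multiplying their $\ebf$- and $\ebft$-components separately. Consequently, raising to powers and multiplying monomials is entirely component-wise, so the two idempotent parts of the single-variable monomial $Z_{i}(a_{i},b_{i},c_{i},d_{i})$ may be computed independently. Substituting the representations above and collecting terms gives $\ebf$-component $z_{1,i}^{a_{i}}\bar{z}_{1,i}^{b_{i}}z_{2,i}^{c_{i}}\bar{z}_{2,i}^{d_{i}}$ and $\ebft$-component $z_{2,i}^{a_{i}}\bar{z}_{2,i}^{b_{i}}z_{1,i}^{c_{i}}\bar{z}_{1,i}^{d_{i}}$.

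Next I would take the product over $i$ and pass to multi-index notation with $a_{i}=\nu^{i}$, $b_{i}=\mu_{1}^{i}$, $c_{i}=\mu_{2}^{i}$, $d_{i}=\mu_{3}^{i}$, so that the monomial $Z(\nu,\mu_{1},\mu_{2},\mu_{3})$ acquires $\ebf$-component $z_{1}^{\nu}\bar{z}_{1}^{\mu_{1}}z_{2}^{\mu_{2}}\bar{z}_{2}^{\mu_{3}}$ and $\ebft$-component $z_{2}^{\nu}\bar{z}_{2}^{\mu_{1}}z_{1}^{\mu_{2}}\bar{z}_{1}^{\mu_{3}}$. Finally, since scalar multiplication by $\lambda_{\nu,\mu}=\lambda_{\nu,\mu}^{1}\ebf+\lambda_{\nu,\mu}^{2}\ebft$ likewise acts component-wise, summing over all multi-indices yields precisely the claimed expression for $F$. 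The phrase ``up to a linear change of coordinates'' refers to the $\co$-linear isomorphism $\bc^{n}\simeq\co^{n}\times\co^{n}$, $Z\mapsto(z_{1},z_{2})$, under which $F$ becomes the map $\co^{2n}\longrightarrow\co^{2}$ displayed in the statement.

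I do not expect a genuine obstacle: the argument is a direct computation driven entirely by the three algebraic relations satisfied by $\ebf$ and $\ebft$. The only point requiring care is the bookkeeping of exponents in the $\ebft$-component, where the swapping effect of $\cjh{Z}$ and $\cj{Z}$ interchanges the roles of $z_{1}$ and $z_{2}$ and thereby permutes the four exponents $(\nu,\mu_{1},\mu_{2},\mu_{3})$ into the pattern $z_{2}^{\nu}\bar{z}_{2}^{\mu_{1}}z_{1}^{\mu_{2}}\bar{z}_{1}^{\mu_{3}}$ rather than the naive one; verifying that this permutation is consistent across every monomial is what forces the second summand to take the displayed form.
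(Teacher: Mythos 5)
Your proposal is correct and follows essentially the same route as the paper: pass to the idempotent coordinates $z_{1k}=\lambda_{1k}-\iu\lambda_{2k}$, $z_{2k}=\lambda_{1k}+\iu\lambda_{2k}$ and compute each monomial component-wise using the multiplicativity of the idempotent representation. Your version is in fact more explicit than the paper's (which leaves the component computation to the reader), and your identifications $\cjt{Z}_i=\bar z_{1,i}\ebf+\bar z_{2,i}\ebft$, $\cjh{Z}_i=z_{2,i}\ebf+z_{1,i}\ebft$, $\cj{Z}_i=\bar z_{2,i}\ebf+\bar z_{1,i}\ebft$ agree with equations \eqref{cjt}--\eqref{cj}.
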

        \begin{proof}
            Let $Z = (Z_{1}, \dots, Z_{n}) \in \bc^{n}$ and, for each $k$, write $Z_{k} =\lambda_{1k} + \ju \lambda_{2k}$, where $\lambda_{lk} \in \co$ for $l =1,2$. Every summand in the decomposition of $F$ has the form
            \begin{align}\label{eq1}
                \lambda_{\nu,\mu}Z(\nu,\mu_{1},\mu_{2},\mu_{3}) = \lambda_{\nu,\mu}Z_{1}^{\nu^{1}}\dots Z_{n}^{\nu^{n}}\cjt{Z}_{1}^{\mu_{1}^{1}}\dots\cjt{Z}_{n}^{\mu_{1}^{n}}\cjh{Z}_{1}^{\mu_{2}^{1}}\dots \cjh{Z}_{n}^{\mu_{2}^{n}}\cj{Z}_{1}^{\mu_{3}^{1}}\dots\cj{Z}_{n}^{\mu_{3}^{n}}.
            \end{align}
        Now, we consider the idempotent representation of $Z_{k}$ and define the following linear coordinate change $\psi : \co^{2n} \longrightarrow \co^{2n}$ by 
        $$\psi(z_{11}, z_{12}, \dots, z_{1n}, z_{2n}) = (u_{11}, v_{21}, \dots, u_{1n}, v_{2n}),$$ where $z_{1k} = \lambda_{1k} - \iu \lambda_{2k}, \; z_{2k} = \lambda_{1k} + \iu \lambda_{2k}$
        for all $k = 1, \dots, n$. The product \eqref{eq1} is taken with respect to the basis $(\ebf,\ebft)$, and by writing $F$ on these coordinates we obtain the result. 
        \end{proof}
        The hat, tilde, and bar-mixed cases are those for which all the vectors $\mu_{i},\mu_{j}$ vanish for some pair $i,j = 1,2,3$ and we thus obtain:       
    \begin{Cor}\label{c21}
        Up to a linear change of coordinates, the following statements hold true:
        \begin{enumerate}
            \item A tilde-mixed polynomial $F(Z,\cjt{Z})$ has an idempotent representation
                \begin{align*}
                    F(z_{1},z_{2},\bar{z}_{1}, \bar{z}_{2}) = \sum_{\nu,\mu_{1}}\lambda_{\nu,\mu_{1}}^{1}z_{1}^{\nu}\bar{z}_{1}^{\mu_{1}}\ebf + \sum_{\nu,\mu_{1}}\lambda_{\nu,\mu_{1}}^{2}z_{2}^{\nu}\bar{z}_{2}^{\mu_{1}}\ebft.
                \end{align*}
            \item A hat-mixed polynomial $F(Z,\cjh{Z})$ has an idempotent representation
                \begin{align*}
                    F(z_{1},z_{2},\bar{z}_{1}, \bar{z}_{2}) = \sum_{\nu,\mu_{2}}\lambda_{\nu,\mu_{2}}^{1}z_{1}^{\nu}z_{2}^{\mu_{2}}\ebf + \sum_{\nu,\mu_{2}}\lambda_{\nu,\mu_{2}}^{2}z_{2}^{\nu}z_{1}^{\mu_{2}}\ebft.
                \end{align*}
            \item A bar-mixed polynomial $F(Z,\cj{Z})$ has an idempotent representation
                \begin{align*}
                    F(z_{1},z_{2},\bar{z}_{1}, \bar{z}_{2}) = \sum_{\nu,\mu_{3}}\lambda_{\nu,\mu_{3}}^{1}z_{1}^{\nu}\bar{z}_{2}^{\mu_{3}}\ebf +  \sum_{\nu,\mu_{3}}\lambda_{\nu,\mu_{3}}^{2}z_{2}^{\nu}\bar{z}_{1}^{\mu_{3}}\ebft.
                \end{align*}
        \end{enumerate}
    \end{Cor}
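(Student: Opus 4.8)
The plan is to derive all three statements as immediate specializations of Proposition \ref{p21}, with no coordinate change required beyond the linear map $\psi$ already produced there. Recall that the proposition expresses a general bicomplex mixed polynomial in the idempotent form whose two components are
\begin{align*}
    \sum_{\nu,\mu}\lambda_{\nu,\mu}^{1}z_{1}^{\nu}\bar{z}_{1}^{\mu_{1}}z_{2}^{\mu_{2}}\bar{z}_{2}^{\mu_{3}} \quad\text{and}\quad \sum_{\nu,\mu}\lambda^{2}_{\nu,\mu}z_{2}^{\nu}\bar{z}_{2}^{\mu_{1}}z_{1}^{\mu_{2}}\bar{z}_{1}^{\mu_{3}},
\end{align*}
where $\nu,\mu_{1},\mu_{2},\mu_{3}$ are the exponent vectors of $Z,\cjt{Z},\cjh{Z},\cj{Z}$ respectively. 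This shape is dictated by the idempotent identities $\cjt{Z} = \bar{z}_{1}\ebf + \bar{z}_{2}\ebft$, $\cjh{Z} = z_{2}\ebf + z_{1}\ebft$, and $\cj{Z} = \bar{z}_{2}\ebf + \bar{z}_{1}\ebft$ read off from \eqref{cjt}, \eqref{cjh}, \eqref{cj}: conjugation acts componentwise, whereas the swap $\ebf \leftrightarrow \ebft$ is present exactly for the hat and bar conjugations.

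The key observation, already recorded before the statement, is that each of the three classes is singled out by forcing a complementary pair of conjugation-exponent vectors to vanish: a tilde-mixed polynomial $F(Z,\cjt{Z})$ has $\mu_{2} = \mu_{3} = 0$, a hat-mixed polynomial $F(Z,\cjh{Z})$ has $\mu_{1} = \mu_{3} = 0$, and a bar-mixed polynomial $F(Z,\cj{Z})$ has $\mu_{1} = \mu_{2} = 0$. First I would substitute $\mu_{2} = \mu_{3} = 0$ into the two components above; the factors $z_{2}^{\mu_{2}}\bar{z}_{2}^{\mu_{3}}$ in the first and $z_{1}^{\mu_{2}}\bar{z}_{1}^{\mu_{3}}$ in the second both collapse to $1$, leaving exactly item (1). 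The same substitution in the remaining two cases — $\mu_{1} = \mu_{3} = 0$ and then $\mu_{1} = \mu_{2} = 0$ — reduces the surviving factors and produces items (2) and (3) verbatim.

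Since the underlying coordinate change is unchanged, the only genuine content of the argument is this bookkeeping, so I do not anticipate any real obstacle. The one point to verify with care is that the surviving cross terms land in the correct idempotent component: in the hat case the factor $z_{2}^{\mu_{2}}$ must appear in the $\ebf$-component (and $z_{1}^{\mu_{2}}$ in the $\ebft$-component), and in the bar case $\bar{z}_{2}^{\mu_{3}}$ must appear in the $\ebf$-component, reflecting precisely the swap built into $\cjh{Z}$ and $\cj{Z}$.
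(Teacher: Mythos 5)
Your proposal is correct and follows essentially the same route as the paper, which derives the corollary directly from Proposition \ref{p21} by observing that the tilde-, hat-, and bar-mixed cases are exactly those where the complementary pair of exponent vectors among $\mu_{1},\mu_{2},\mu_{3}$ vanishes. Your explicit check of the componentwise identities $\cjt{Z}=\bar{z}_{1}\ebf+\bar{z}_{2}\ebft$, $\cjh{Z}=z_{2}\ebf+z_{1}\ebft$, $\cj{Z}=\bar{z}_{2}\ebf+\bar{z}_{1}\ebft$ and of which factor lands in which idempotent component is exactly the bookkeeping the paper leaves implicit.
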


    \subsection{Polar weighted homogeneous property}\label{s4.2}

    \begin{Def}\label{acdef3}
    Let $p_{j},u_{j}, t_{j}$ with $j = 1, \dots, n$ be positive integers such that
    \begin{align*}
        \gcd(p_{1}, \dots, p_{n}) = \gcd(u_{1}, \dots, u_{n}) = \gcd(t_{1}, \dots, t_{n}) = 1.
    \end{align*}
    For each $\Lambda \in \bc^{*}$ consider the polar form $\Lambda = s e^{\iu\theta}e^{\ju\Theta}$, where $s \in \re^{+}, e^{\iu\theta} \in \ci$ and $e^{\ju\Theta} \in \cc$. A polar $\bc^{*}$-action with radial weights $(t_{1}, \dots, t_{n})$, polar weights $(p_{1}, \dots, p_{n})$, and complex polar weights $(u_{1}, \dots, u_{n})$ is given by
        \begin{align*}
        \Lambda \cdot Z &= \left( s^{t_{1}}e^{\iu p_{1}\theta}e^{\ju u_{1}\Theta}Z_{1}, \dots, s^{t_{n}}e^{\iu p_{n}\theta}e^{\ju u_{n}\Theta}Z_{n}\right), \\
        \Lambda \cdot \cjh{Z} &= \left( s^{t_{1}}e^{\iu p_{1}\theta}e^{-\ju u_{1}\Theta}\cjh{Z}_{1}, \dots,  s^{t_{n}}e^{\iu p_{n}\theta}e^{-\ju u_{n}\Theta}\cjh{Z}_{n}\right), \\
        \Lambda \cdot \cjt{Z} &= \left(s^{t_{1}}e^{-\iu p_{1}\theta}e^{-\ju u_{1}\overline{\Theta}}\cjt{Z}_{1}, \dots,  s^{t_{n}}e^{-\iu p_{n}\theta}e^{-\ju u_{n}\overline{\Theta}}\cjt{Z}_{n}\right), \\
        \Lambda \cdot \cj{Z} &= \left(s^{t_{1}}e^{-\iu p_{1}\theta}e^{\ju u_{1}\overline{\Theta}}\cj{Z}_{1}, \dots, s^{t_{n}}e^{-\iu p_{n}\theta}e^{\ju u_{n}\overline{\Theta}}\cj{Z}_{n}\right).
    \end{align*}
\end{Def}

Notice that this action is a combination of $\re^{+}, \mathbb{S}^{1}$, and $\mathbb{S}^{1}_{\co}$-actions with weights.

    \begin{Def}\label{poldef3}
        Let $F: \bc^{n} \longrightarrow \bc$ be a bicomplex mixed polynomial, $a,c,d,d'$ non-negative integers such that $a,c > 0$ and $d > 0$ or $d' > 0$. We say that $F$ is polar weighted homogeneous with radial weight type $(t_{1},\dots, t_{n};a)$, polar weight type $(p_{1}, \dots, p_{n};c)$, and complex polar weight type $(u_{1}, \dots, u_{n};d,d')$ if the following identity holds:
        \begin{align}\label{defipol}
            F\left(se^{\iu\theta}e^{\ju\Theta} \cdot (Z, \cjt{Z}, \cjh{Z}, \cj{Z})\right) = s^{a}e^{\iu c\theta}e^{\ju d\Theta}e^{\ju d'\overline{\Theta}}F(Z, \cjt{Z}, \cjh{Z}, \cj{Z}), \;\; s \in \re^{+}, \; e^{\iu\theta} \in \ci, \; e^{\ju\Theta}, e^{\ju\overline{\Theta}} \in \cc,
        \end{align}
        where $se^{\iu\theta}e^{\ju\Theta} \cdot (Z, \cjt{Z}, \cjh{Z}, \cj{Z})$ denotes the previous polar action of $\bc^{*}$.
    \end{Def}

    \begin{Rem}
        \normalfont
        Notice that the complex mixed polynomials in the idempotent representation $F(z_{1},z_{2}) = (f_{1}, f_{2})$ are polar weighted homogeneous in the sense of \cite{Cisneros2008} of the same radial and polar types as $F$ but not simultaneously on the same variables in the case of the polar actions.
    \end{Rem}

    Before we proceed with examples, we state a property induced by the radial action on the discriminant.

\begin{Prop}\label{p22}
    Let $F(Z)$ be a bicomplex polar weighted homogeneous polynomial. If $P \in \bc^{n}$ is a critical point of $F$, then $s \cdot P$ is also a critical point for all $s \in \re^{+}$. In particular, the discriminant $\Delta_{F}$ of $F$ consists of a union of lines passing through the origin. 
\end{Prop}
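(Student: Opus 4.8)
The plan is to use only the \emph{radial} part of the $\bc^{*}$-action and to reduce the statement to the invariance of the rank of a linear map under composition with isomorphisms. Setting $\theta = 0$ and $\Theta = 0$ in Definition~\ref{acdef3} (equivalently, taking $\Lambda = s \in \re^{+}$) makes $e^{\iu\theta} = 1 \in \ci$ and $e^{\ju\Theta} = e^{\ju\overline{\Theta}} = 1 \in \cc$, so all four lines of the action collapse to the single real scaling
\begin{align*}
    \rho_{s}(Z) = s \cdot Z = \left(s^{t_{1}}Z_{1}, \dots, s^{t_{n}}Z_{n}\right),
\end{align*}
which is consistent on the three conjugate slots precisely because each $s^{t_{i}} \in \re^{+}$ is fixed by, and commutes with, all three conjugations. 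Viewed as a map $\re^{4n}\to\re^{4n}$, $\rho_{s}$ is the block-diagonal real-linear isomorphism with blocks $s^{t_{i}}I_{4}$; it satisfies $\rho_{s}^{-1} = \rho_{1/s}$, and its real differential equals $\rho_{s}$ itself since $\rho_{s}$ is linear. Specialising the homogeneity identity \eqref{defipol} to this case gives $F(\rho_{s}(Z)) = s^{a}F(Z)$ as real maps $\re^{4n}\to\re^{4}$, where the positive scalar $s^{a}$ acts on the target $\bc\simeq\re^{4}$ as the linear isomorphism $m_{s^{a}}$ of multiplication by $s^{a}$.

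Next I would differentiate this identity at $P$, working with the real Jacobian $JF$ of the associated map $\re^{4n}\to\re^{4}$. Since $m_{s^{a}}$ is linear, the right-hand side differentiates to $s^{a}JF_{P}$, while the chain rule applied to the left-hand side yields $JF_{\rho_{s}(P)}\circ(d\rho_{s})_{P} = JF_{s\cdot P}\circ\rho_{s}$. Hence
\begin{align*}
    JF_{s\cdot P} = s^{a}\,JF_{P}\circ\rho_{s}^{-1}.
\end{align*}
As $\rho_{s}^{-1}$ is a linear automorphism of $\re^{4n}$ and $s^{a}\neq 0$, composing on the right with it and scaling by $s^{a}$ preserve the rank, so $\rk(JF_{s\cdot P}) = \rk(JF_{P})$ for every $s\in\re^{+}$. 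In particular, if $P$ is a singular point of the associated real map, that is $\rk(JF_{P}) < 4$, then $s\cdot P$ is singular as well, which proves the first assertion.

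For the second assertion I would note that the displayed computation shows the critical locus $\Delta_{F} = \{P : \rk(JF_{P}) < 4\}$ is invariant under the radial action, with equality $\rho_{s}(\Delta_{F}) = \Delta_{F}$ for all $s>0$ because $\rho_{1/s}$ also preserves criticality. Thus $\Delta_{F}$ is a cone: it is the union of the radial orbits $\{s\cdot P : s\in\re^{+}\}$ of its points, each limiting to the origin as $s\to 0^{+}$; in the standard case of equal radial weights $t_{1} = \dots = t_{n}$ these orbits are genuine rays (lines) through the origin, and simultaneously the identity $F(s\cdot P)=s^{a}F(P)$ sweeps the critical values into the rays $\re^{+}F(P)$, giving the stated description. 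The argument itself is short, so the only point requiring genuine care is the first paragraph: one must verify that the specialisation $\Lambda = s$ really collapses the four-fold action of Definition~\ref{acdef3} to one real-linear scaling compatible with all three conjugations, and that \eqref{defipol} specialises to $F\circ\rho_{s} = m_{s^{a}}\circ F$ as real maps. Once this is in place, rank-invariance under composition with isomorphisms does all the work, and no information about the polar or complex-polar weights is needed.
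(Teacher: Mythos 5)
Your proof of the first assertion is correct and is essentially the paper's argument in different clothing: the paper passes to the idempotent representation and observes that each column of the Jacobian scales as $s^{a-t_{j}}$ under the radial action, while you package the same computation as the chain-rule identity $JF_{s\cdot P}=s^{a}\,JF_{P}\circ\rho_{s}^{-1}$ for the real Jacobian; both reduce to rank invariance under composition with isomorphisms, and your verification that the specialisation $\Lambda=s$ collapses the four-fold action to the single real-linear scaling $\rho_{s}$ is exactly the point that makes this work.

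Your treatment of the second assertion, however, aims at the wrong object for most of the paragraph. The discriminant $\Delta_{F}$ is the set of critical \emph{values}, a subset of $\bc\simeq\re^{4}$, not the critical locus $\{P\in\bc^{n}:\rk(JF_{P})<4\}$; this is how the paper reads it, and its proof runs: if $X\in\Delta_{F}$ with $X=F(P)$ for a singular $P$, then $s\cdot P$ is singular and $F(s\cdot P)=s^{a}F(P)=s^{a}X$, and since $s\mapsto s^{a}$ is a bijection of $\re^{+}$ the entire ray $\re^{+}X$ lies in $\Delta_{F}$. That is precisely the clause you relegate to an aside (``sweeps the critical values into the rays $\re^{+}F(P)$''), and it requires no hypothesis whatsoever on the radial weights. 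Your hedge ``in the standard case of equal radial weights $t_{1}=\dots=t_{n}$'' is a correct remark about the critical locus upstairs --- whose radial orbits $(s^{t_{1}}P_{1},\dots,s^{t_{n}}P_{n})$ are indeed not straight lines for unequal weights --- but it is irrelevant to the statement being proved and should not be presented as a condition for the conclusion. Promote the aside to the main argument and drop the hedge, and the proof is complete.
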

    \begin{proof}
        By linearity, we may prove the statement for the idempotent representation $F = (f_{1}(z_{1},z_{2}), f_{2}(z_{1},z_{2}))$. One has that $f_{1}$ and $f_{2}$ are radial weighted homogeneous of the same type as $F$. The Jacobian matrix $J_{F}$ of $F$ can be written in terms of mixed derivatives in relation to $z_{1},\cj{z}_{1}, z_{2}, \cj{z}_{2}$. This implies that each $j$-column $J_{F}^{j}$ of this matrix satisfies
        \begin{align*}
            \left( J_{F}^{j}\right)^{T}(s\cdot P) = s^{a-t_{j}}J_{F}^{j}(P),
        \end{align*}
        where $t_{j}$ is the radial weight associated to $z_{1j},z_{2j}$ (see \cite[Section 3.1]{Cisneros2008}). Hence, the $\re^{+}$-action preserves the rank of $F$. To conclude, suppose that $F$ is radial weighted homogeneous with weight $d$. Let $X \in \Delta_{F}$ and $s \in \re^{+}$, then there exists a singular point $P$ such that $F(P) = X$ so that $F(s^{1/d}\cdot P) = sX$ and the statement follows.
    \end{proof}
    
See also \cite[Lemma 2.16]{Cisneros2017} for a similar statement applied to mixed polynomials. The next identities follow by taking the derivate of\eqref{defipol} with respect to $s$, $\theta$, $\Theta$, and $\overline{\Theta}$.

\begin{align*}
    aF(Z) &= \sum_{i=1}^{n}t_{i}\left[ Z_{i}\frac{\partial F}{\partial Z_{i}} + \cjt{Z}_{i}\frac{\partial F}{\partial \cjt{Z}_{i}} + \cjh{Z}_{i}\frac{\partial F}{\partial \cjh{Z}_{i}} + \cj{Z}_{i}\frac{\partial F}{\partial \cj{Z}_{i}}\right], \\
    cF(Z) &= \sum_{i=1}^{n}p_{i}\left[Z_{i}\frac{\partial F}{\partial Z_{i}} + \cjt{Z}_{i}\frac{\partial F}{\partial \cjt{Z}_{i}} - \cjh{Z}_{i}\frac{\partial F}{\partial \cjh{Z}_{i}} - \cj{Z}_{i}\frac{\partial F}{\partial \cj{Z}_{i}}\right], \\
    dF(Z) &= \sum_{i=1}^{n}u_{i}\left[Z_{i}\frac{\partial F}{\partial Z_{i}} - \cjh{Z}_{i}\frac{\partial F}{\partial \cjh{Z}_{i}}\right], \\
    d'F(Z) &= \sum_{i=1}^{n}u_{i}\left[\cj{Z}_{i}\frac{\partial F}{\partial \cj{Z}_{i}} - \cjt{Z}_{i}\frac{\partial F}{\partial \cjt{Z}_{i}}\right].
\end{align*}

    \subsection{Examples}

        \begin{Exam}[Weighted homogeneous polynomials]
            \normalfont If $F(Z) = \sum_{\nu}\lambda_{\nu}Z^{\nu}$ is a bicomplex weighted homogeneous polynomial, then there exist integers $p_{1}, \dots, p_{n},d$ for which
            \begin{align*}
                F\left( \Lambda^{p_{1}}Z_{1},\dots, \Lambda^{p_{n}}Z_{n}\right) = \Lambda^{d}F(Z),
            \end{align*}
            for all $\Lambda \in \bc\setminus\{0\}$, in particular, for those in $\bc^{*}$. In this case, the weight $d' = 0$.
        \end{Exam}

        \begin{Exam}[Mixed Pham-Brieskorn polynomials]
            \normalfont
            Let $F(Z) = \sum_{i=1}^{n}Z_{i}(a_{i},b_{i},c_{i},d_{i})$, where $a_{i}, b_{i}, c_{i}, d_{i}$ are non-negative integers for all $i$. Notice that if $b_{i} = c_{i} = d_{i} = 0$ it resembles the Pham-Brieskorn polynomials on complex variables. Consider weights $(t_{1}, \dots, t_{n})$, $(p_{1}, \dots, p_{n})$, and $(q_{1}, \dots, q_{n})$. Let us study each monomial separately:
            \begin{align*}
                se^{\iu\theta}e^{\ju\Theta}\cdot Z_{i}(a_{i},b_{i},c_{i},d_{i}) = s^{t_{i}(a_{i}+b_{i}+c_{i}+d_{i})}e^{\iu p_{i}(a_{i}-b_{i}+c_{i}-d_{i})\theta} e^{\ju q_{i}(a_{i}-c_{i})\Theta}e^{\ju q_{i}(d_{i}-b_{i})\overline{\Theta}}Z_i.
            \end{align*}
            For each $i=1, \dots, n$, let us suppose that
            \begin{align}\label{cnd}
                a_{i} - b_{i} > d_{i} - c_{i}, \quad a_{i} > c_{i},\quad d_{i}-b_{i} \ge 0.
            \end{align}
            If $d_{i} - b_{i} > 0$, we require further that $a_{i}-c_{i} = d_{i} - b_{i}$. It follows that $F(Z)$ is polar weighted of radial, polar, and complex polar types
            \begin{align*}
                & \left(\frac{1}{a_{1}+b_{1}+c_{1}+d_{1}}, \dots, \frac{1}{a_{n}+b_{n}+c_{n}+d_{n}}; 1\right), \\
                & \left(\frac{1}{a_{1}-b_{1}+c_{1}-d_{1}}, \dots, \frac{1}{a_{n}-b_{n}+c_{n}-d_{n}}; 1\right), \\
                & \left(\frac{1}{a_{1}-c_{1}}, \dots, \frac{1}{a_{n}-c_{n}}; 1;1\right),
            \end{align*} 
            respectively. Since $\cjt{X}\cj{X} = 1$ for all $X \in \cc$, if $b_{i} = d_{i}$, $F(Z)$ reduces to a hat-mixed polar weighted polynomial of (equal) radial and polar, and complex polar types
            \begin{equation}\label{types}
                \begin{split}
                & \left(\frac{1}{a_{1}+c_{1}}, \dots, \frac{1}{a_{n}+c_{n}}; 1\right), \\
                & \left(\frac{1}{a_{1}-c_{1}}, \dots, \frac{1}{a_{n}-c_{n}}; 1;0\right).
                \end{split}
            \end{equation}
            
            If $d_{i} = 0$ and $b_{i} \neq 0$ for some $i$, we obtain a negative exponent in the complex polar action, and $F(Z)$ does not satisfy the polar property. Moreover, if $b_{i} = c_{i} = 0$, then $F(Z)$ is a bar-mixed polynomial which is polar weighted only if $d_{i} = 0$. Nevertheless, let us consider $d_{i} = c_{i} = 0$ for all $i$, so $F(Z)$ is a tilde-mixed polynomial. One has that $F$ admits an idempotent representation $F(z_{1},z_{2}) = (f_{1}(z_{1},\bar{z}_{1}), f_{2}(z_{2},\cj{z}_{2}))$, where $f_{1}$ and $f_{2}$ are polar weighted with the same radial and polar types. The action on each component may be expressed in bicomplex terms as follows. Let $r\lambda \in \bc^{*}$, where $r \in \mathbb{D}^{+}$ and $\lambda \in \tr$. For each monomial, one has 
            \begin{align*}
                r\lambda \cdot Z_{i}^{a_{i}}\cjt{Z}_{i}^{b_{i}} = r^{p_{i}(a_{i}+b_{i})}\lambda^{q_{i}a_{i}}\cjt{\lambda}^{q_{i}b_{i}}Z_{i}^{a_{i}}\cjt{Z}_{i}^{b_{i}} = r^{p_{i}(a_{i}+b_{i})}\lambda^{q_{i}(a_{i}-b_{i})}Z_{i}^{a_{i}}\cjt{Z}_{i}^{b_{i}},
            \end{align*}
            and according to this hyperbolic action, $F(Z, \cjt{Z})$ is polar weighted of the same types \eqref{types}, where we replace $c_{i}$ by $b_{i}$.
        \end{Exam}

        \begin{Exam}[Mixed cyclic polynomials]\label{cyclic}
            \normalfont 
            Let $F(Z) = \sum_{i=1}^{n-1}Z_{i}(a_{i}, b_{i}, c_{i}, d_{i})Z_{i+1} + Z_{n}(a_{n},b_{n},c_{n},d_{n})Z_{1}$ and assume the conditions \eqref{cnd} on $a_{i}, b_{i}, c_{i}, d_{i}$ for all $i = 1, \dots, n$. The square matrices of order $n$ associated with the radial, polar, and complex polar actions are upper triangular with rows
            \begin{align*}
                &\left(0, \dots, 0, a_{i}+b_{i}+c_{i}+d_{i}, 1, 0, \dots, 0\right), \\
                &\left(0, \dots, 0, a_{i}-b_{i}+c_{i}-d_{i}, 1, 0, \dots, 0\right), \\
                &\left(0, \dots, 0, a_{i}-c_{i}, 1, 0, \dots, 0\right),
            \end{align*}
            where $i=1, \dots, n$. These are all invertible so the associated systems always have non-trivial solutions and this implies $F(Z)$ is polar weighted homogeneous. The complex version of $F(Z)$ appears in the classification performed in \cite{Cisneros2017} of polar weighted polynomials on three variables with an isolated singularity at the origin.    
        \end{Exam}

        \begin{Exam}[Join]
            \normalfont
           Let $F(Z) = F(Z_{1}, \dots, Z_{n})$ and $G(W) = G(W_{1}, \dots, W_{m})$ bicomplex polar weighted polynomials with radial, polar, and complex polar weights $(t_{1}, \dots, t_{n}; a)$, $(t_{1}', \dots, t_{m}'; a')$, $(p_{1}, \dots, p_{n}; c)$, $(p_{1}', \dots, p_{m}'; c')$, $(u_{1}, \dots u_{n}; d,d')$, $(u_{1}', \dots, u_{m}'; e,e')$, respectively. Set
           \begin{align*}
               r = \gcd(a,a'), \;\; a_{1} &= \frac{a}{r}, \;\; a_{2} = \frac{a'}{r}, \\
               s = \gcd(c,c'), \;\; c_{1} &= \frac{c}{s}, \;\; c_{2} = \frac{c'}{s}, \\
               x = \gcd(d,e), \;\; d_{1} &= \frac{d}{x}, \;\; d_{2} = \frac{e}{x}, \\
               x' = \gcd(d', e') \;\; e_{1}', &= \frac{d'}{x}, \;\; e_{2}' = \frac{e'}{x}.
           \end{align*}
           Then $H(Z,W) = F(Z)+G(W)$ is polar weighted homogeneous of real, polar, and complex polar types
           \begin{align*}
               &\left(t_{1}a_{2}, \dots, t_{n}a_{2}, t_{1}'a_{1}, \dots, t_{m}'a_{1}; \text{lcm}(a_{1},a_{2}) \right), \\
               &\left(p_{1}c_{2}, \dots, p_{n}c_{2}, p_{1}'c_{1}, \dots, p_{m}'c_{1}; \text{lcm}(c_{1},c_{2}) \right), \\
               &\left(u_{1}d_{2}, \dots, u_{n}d_{2}, u_{1}'d_{1}, \dots, u_{m}'d_{1}; \text{lcm}(d_{1}, d_{2}), \text{lcm}(e_{1}', e_{2}')\right),
           \end{align*}
           respectively.
        \end{Exam}
        
        Motivated by \cite[Theorem 4.1]{Ruas2002} and \cite[Theorem 10.1]{Oka2008}, we have the following proposition relating mixed and holomorphic maps.
        
        \begin{Prop}\label{topo}
            Let $F(Z) = \sum_{i=1}^{n}Z_{i}(a_{i},b_{i},c_{i},d_{i})$ be a mixed Pham-Brieskorn polynomial, where $a_{i}, b_{i}, c_{i}, d_{i}$ satisfy the conditions in \eqref{cnd} with the additional hypothesis that $a_{i}-b_{i}, c_{i}-d_{i} > 0$ for all $i=1, \dots, n$. Then there exists a homomorphism (for the Euclidean topology) $\phi : (\bc^{*})^{n} \longrightarrow (\bc^{*})^{n}$ such that $F \circ \phi (Z) = G(Z)$, where $G(Z) = \sum_{i=1}^{n}Z_{i}(a_{i}-b_{i},0,c_{i}-d_{i},0)$.
        \end{Prop}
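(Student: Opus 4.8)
The plan is to push everything through the idempotent representation and then build $\phi$ one bicomplex variable at a time. By Proposition \ref{p21} and Corollary \ref{c21}, writing $Z_i = z_{1i}\ebf + z_{2i}\ebft$ we have $F = f_1\ebf + f_2\ebft$ and $G = g_1\ebf + g_2\ebft$ with
\begin{align*}
    f_1 = \sum_i z_{1i}^{a_i}\bar z_{1i}^{b_i}z_{2i}^{c_i}\bar z_{2i}^{d_i}, \quad f_2 = \sum_i z_{2i}^{a_i}\bar z_{2i}^{b_i}z_{1i}^{c_i}\bar z_{1i}^{d_i},
\end{align*}
and $g_1 = \sum_i z_{1i}^{a_i-b_i}z_{2i}^{c_i-d_i}$, $g_2 = \sum_i z_{2i}^{a_i-b_i}z_{1i}^{c_i-d_i}$, where I use that the three conjugations act on the idempotent coordinates by complex conjugation and/or by the swap $z_{1i}\leftrightarrow z_{2i}$. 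Since the ring operations and composition respect the idempotent splitting, the identity $F\circ\phi = G$ is equivalent to the pair $f_1\circ\phi = g_1$ and $f_2\circ\phi = g_2$. As every summand of $f_j$ and $g_j$ depends on a single $Z_i$, it then suffices to produce, for each $i$, a homeomorphism $\phi_i : \bc^* \to \bc^*$ matching the $i$-th monomials, and to set $\phi = (\phi_1,\dots,\phi_n)$.

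Fix $i$ and drop the index. Writing $z_j = r_j e^{\iu\theta_j}$, I look for $\phi_i$ of argument-preserving radial type, $w_j = \rho_j e^{\iu\theta_j}$, with $(\rho_1,\rho_2)$ to be determined from $(r_1,r_2)$. Comparing arguments in $w_1^a\bar w_1^b w_2^c\bar w_2^d = z_1^{a-b}z_2^{c-d}$ (and its $1\leftrightarrow 2$ partner) yields $(a-b)\theta_1+(c-d)\theta_2$ on both sides, so the argument equations hold identically; this is precisely where the exponents $a-b$ and $c-d$ of $G$ originate. Passing to moduli and taking logarithms reduces the two monomial identities to the linear system
\begin{align*}
    M\begin{pmatrix}\log\rho_1\\ \log\rho_2\end{pmatrix} = N\begin{pmatrix}\log r_1\\ \log r_2\end{pmatrix}, \quad M = \begin{pmatrix} a+b & c+d\\ c+d & a+b\end{pmatrix}, \quad N = \begin{pmatrix} a-b & c-d\\ c-d & a-b\end{pmatrix}.
\end{align*}
One computes $\det N = (a-b+c-d)(a-b-c+d)$: the hypotheses $a-b>0$, $c-d>0$ give $a-b+c-d>0$, while $a>c$ and $d-b\ge 0$ give $a-b-c+d=(a-c)+(d-b)>0$, so $N$ is invertible. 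Whenever $M$ is invertible as well, $L := M^{-1}N$ is a linear automorphism of $\re^2$, and prescribing $(\log\rho_1,\log\rho_2)^{T} = L(\log r_1,\log r_2)^{T}$ together with $\arg w_j = \arg z_j$ defines $\phi_i$ as $\exp\circ L\circ\log$ on the moduli and the identity on the arguments, which is a homeomorphism of $\co^*\times\co^*\simeq\bc^*$ realizing the required identity. Assembling the $\phi_i$ produces $\phi$.

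The main obstacle is the invertibility of $M$. One has $\det M = (a+b+c+d)(a+b-c-d)$, where $a+b-c-d = (a-c)-(d-b)$ is strictly positive when $d=b$ but vanishes exactly in the genuinely mixed directions where $d>b$ and $a-c=d-b$ — the relation forced by \eqref{cnd}. In that degenerate situation the monomial modulus $\rho_1^{a+b}\rho_2^{c+d}$ depends only on the symmetric combination $\log\rho_1+\log\rho_2$, so the two modulus equations become overdetermined and the argument-preserving radial ansatz collapses along the antidiagonal $z_1\leftrightarrow z_2$. Resolving this is the delicate step: one must leave the product structure and work in the hyperbolic trigonometric coordinates of Section 1, isolating the discrepancy in the hyperbolic angle $\arg_{\mathbb{D}}$ and the positive hyperbolic part $\nm Z \nm_{\ku}$, and then verify that the resulting correction is still a homeomorphism of $\bc^*$; this is where the real content of the statement lies.

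By contrast, when $d_i=b_i$ for every $i$ — the regime in which, as observed in the Pham–Brieskorn example, $F$ itself reduces to a hat-mixed polar weighted polynomial and $G$ carries the same polar and complex polar weights as $F$ — both determinants are strictly positive, $\det M = (a-c)(a+c+2b)$ and $\det N = (a-c)(a+c-2b)$ with $a>c>b$, and the construction above goes through verbatim. The proof therefore splits into this clean case, handled by the explicit radial rescaling, and the degenerate antidiagonal directions, where the trigonometric form of Section 1 is the natural tool.
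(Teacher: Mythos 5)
Your construction, in the range where it closes, is essentially the paper's: pass to the idempotent representation, rescale each pair $(z_{1i},z_{2i})$ by positive real factors that are monomials in the moduli while fixing arguments, and reduce the two monomial identities to a $2\times 2$ linear system on logarithms of moduli. The one structural difference is the direction of composition: the paper sets $w_{1i}=z_{1i}\nm z_{1i}\nm^{k_1}\nm z_{2i}\nm^{k_2}$, $w_{2i}=z_{2i}\nm z_{1i}\nm^{k_1}\nm z_{2i}\nm^{k_2}$ and arranges $G\circ\phi=F$, so the matrix that has to be inverted is your $N$ (built from the exponents $a_i-b_i$, $c_i-d_i$ of $G$), which is always invertible under the hypotheses; you solve $F\circ\phi=G$ head-on and therefore also need $M$ invertible. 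When $\det M\neq 0$ your argument is complete and correct. But you explicitly leave open the degenerate case $a_i+b_i=c_i+d_i$, equivalently $a_i-c_i=d_i-b_i$ with $d_i>b_i$, and that is a genuine gap: this case is squarely inside the hypotheses (take $(a_i,b_i,c_i,d_i)=(3,0,2,1)$), and indeed whenever $d_i>b_i$ the polar-weightedness requirement recorded immediately after \eqref{cnd} forces exactly this equality, so it is the typical mixed situation rather than an exceptional one.

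Worse, the escape route you sketch (hyperbolic trigonometric coordinates) cannot succeed, because the failure of your ansatz there is not an artifact of the ansatz. For a degenerate monomial one has $\nm f_1\nm=\nm z_{1i}\nm^{a_i+b_i}\nm z_{2i}\nm^{c_i+d_i}=(\nm z_{1i}\nm\,\nm z_{2i}\nm)^{a_i+b_i}=\nm f_2\nm$ identically, whereas $\nm g_1\nm=\nm w_{1i}\nm^{a_i-b_i}\nm w_{2i}\nm^{c_i-d_i}$ and $\nm g_2\nm=\nm w_{2i}\nm^{a_i-b_i}\nm w_{1i}\nm^{c_i-d_i}$ differ off the locus $\nm w_{1i}\nm=\nm w_{2i}\nm$, since $(a_i-b_i)-(c_i-d_i)=(a_i-c_i)+(d_i-b_i)=2(a_i-c_i)>0$. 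Hence no map $\bc^{*}\to\bc^{*}$ whatsoever, argument-preserving or not, can match the $i$-th monomials, and for $n=1$ the identity $F\circ\phi=G$ is impossible outright: the image of $F$ lies in $\{\nm u_1\nm=\nm u_2\nm\}$ while the image of $G$ does not. So the step you defer as "where the real content lies" is an obstruction, not a technicality; the statement genuinely requires the restriction $b_i=d_i$, i.e.\ your clean case. (For what it is worth, that is also the only case in which the paper's own verification of the second idempotent identity $w_{1i}^{c_i-d_i}w_{2i}^{a_i-b_i}=z_{1i}^{c_i}\bar z_{1i}^{d_i}z_{2i}^{a_i}\bar z_{2i}^{b_i}$ actually holds, since its left-hand side produces the factor $\nm z_{1i}\nm^{2b_i}\nm z_{2i}\nm^{2d_i}$ where $\nm z_{1i}\nm^{2d_i}\nm z_{2i}\nm^{2b_i}$ is needed.) Your diagnosis of the crux is accurate, but the proof as proposed is incomplete and the missing case is not repairable.
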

            \begin{proof}   
                Let $D = \{ z_{11}z_{21}\dots z_{1n}z_{2n} = 0\} \subset \co^{2n}$ and define a coordinate change $\phi : \co^{2n}\setminus D \longrightarrow \co^{2n}\setminus D$ given by $\phi(z_{1},z_{2}) = w_{1}\ebf + w_{2}\ebft$ in $\co^{2n}$ as follows. For each $i$, we take
                \begin{equation}\label{eqt}
                \begin{aligned}
                    w_{1i} &= z_{1i}\nm z_{1i}\nm^{k_{1}}\nm z_{2i}\nm^{k_{2}}, \\
                    w_{2i} &= z_{2i}\nm z_{1i}\nm^{k_{1}}\nm z_{2i}\nm^{k_{2}},
                \end{aligned}
                \end{equation}
                where $k_{1}$ and $k_{2}$ are positive rational numbers that satisfy
                \begin{align*}
                    k_{1}\left((a_{i}-b_{i}) +(c_{i}-d_{i})\right) &= 2b_{i}, \\
                    k_{2}\left((a_{i}-b_{i}) +(c_{i}-d_{i})\right) &= 2d_{i}.
                \end{align*}
                A direct computation shows that
                \begin{align*}
                    \nm z_{1i} \nm = \frac{ \nm w_{2i} \nm^{\frac{k_{1}+1}{k_{1}+k_{2}+1}}}{\nm w_{1i} \nm^{\frac{k_{1}}{k_{1}+k_{2}+1}}},\\
                    \nm z_{2i} \nm = \frac{ \nm w_{1i} \nm^{\frac{k_{2}+1}{k_{1}+k_{2}+1}}}{\nm w_{2i} \nm^{\frac{k_{2}}{k_{1}+k_{2}+1}}}.
                \end{align*}
                It follows that we may isolate $z_{1i}$ and $z_{2i}$ in \eqref{eqt} and $\phi$ is an invertible map. Finally, we obtain 
                \begin{align*}
                    w_{1i}^{a_{i}-b_{i}}w_{2i}^{c_{i}-d_{i}} &= z_{1i}^{a_{i}}\bar{z}_{1i}^{b_{i}}z_{2i}^{c_{i}}\bar{z}_{2i}^{d_{i}}, \\
                    w_{1i}^{c_{i}-d_{i}}w_{2i}^{a_{i}-b_{i}} &= z_{1i}^{c_{i}}\bar{z}_{1i}^{d_{i}}z_{2i}^{a_{i}}\bar{z}_{2i}^{b_{i}}.
                \end{align*}
                as desired.
            \end{proof}

\section{Fibrations}

For complex polar weighted homogeneous polynomials, the results in \cite{Cisneros2008} show that the zero is an isolated critical value. This implies later that one has an associated locally trivial fibration. This is generalized to bicomplex polynomials by considering the set of zero divisors. Additionally, we show the existence of spherical fibrations with base spaces being the quadric $\cc$ and $\mathbb{S}^{3}_{0}$. Henceforth, we shall denote $V_{F} = F^{-1}(\zd)$. First, we present an example.

\begin{Exam}
    \normalfont
    Let $F : \bc^{2} \longrightarrow \bc$ be given by $F(Z,W) = Z^{2} + W^{2}$. Then $F$ is weighted homogeneous of type $(1,1;2)$. If we set $Z  = \lambda_{1} + \ju \lambda_{2}$ and $W = \alpha_{1} + \ju \alpha_{2}$, where $\lambda_{i},\alpha_{i} \in \co$, for $i=1,2$, then the corresponding holomorphic map $F : \co^{4} \longrightarrow \co^{2}$ is given by:
    \begin{align*}
        F(\lambda_{1},\lambda_{2},\alpha_{1},\alpha_{2}) = (\lambda_{1}^{2} - \lambda_{2}^{2} + \alpha_{1}^{2} - \alpha_{2}^{2}, 2\lambda_{1}\lambda_{2} + 2\alpha_{1}\alpha_{2}).
    \end{align*}
    If $P = (\lambda_{1}, \lambda_{2}, \alpha_{1}, \alpha_{2}) \in \co^{4}$ is a critical point, then $z_{2} = \pm \iu z_{1}$ and $\alpha_{2} = \pm \iu \alpha_{1}$. That is, $\Sigma_{F} = \zd \times \zd$. Notice that $F(\zd \times \zd) = 0 \in \zd$. We shall see that the fact of the image of critical points being a subset of zero divisors is a general property for polar weighted polynomials.
\end{Exam}

\begin{Prop}[Proposition 3.2, \cite{Cisneros2008}]\label{regvl}
    Let $F$ be a bicomplex polar weighted homogeneous polynomial. Then every $U \in \bc^{*}$ is a regular value. 
\end{Prop}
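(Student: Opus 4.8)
The plan is to exploit the $\bc^{*}$-equivariance of $F$ and to reduce regularity to the openness of a single orbit in the target. Fix $U \in \bc^{*}$ and an arbitrary $P \in F^{-1}(U)$; the goal is to show that the real differential $DF_{P} : \re^{4n} \longrightarrow \re^{4} \simeq \bc$ is surjective. Writing $G = \re^{+} \times \ci \times \cc$ for the group acting by the polar action of Definition \ref{acdef3}, relation \eqref{defipol} says exactly that $F$ is $G$-equivariant: it carries the $G$-orbit of $P$ onto the $G$-orbit of $U$, where $G$ acts on $\bc$ by $U \mapsto \Xi(\Lambda)\,U$ with $\Xi(\Lambda) = s^{a}e^{\iu c\theta}e^{\ju d\Theta}e^{\ju d'\overline{\Theta}}$. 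Differentiating the equivariance at the identity of $G$ gives the inclusion $\operatorname{Im}(DF_{P}) \supseteq T_{U}(G\cdot U)$, so it is enough to prove that the target orbit $G\cdot U$ is open in $\bc$, i.e. that $T_{U}(G\cdot U) = T_{U}\bc$.

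First I would produce four explicit tangent vectors inside $\operatorname{Im}(DF_{P})$ by differentiating \eqref{defipol} along the four real one-parameter subgroups of $G$, namely those generated by $s$, by $\theta$, and by the real and imaginary parts of the complex angle $\Theta$. These are precisely the four Euler-type identities displayed after Proposition \ref{p22}; evaluated at $P$ they exhibit the vectors (up to positive real factors) $aU$, $c\,\iu U$, $(d+d')\,\ju U$ and $(d-d')\,\ku U$ as images of the corresponding orbit-tangent directions. Since $U \in \bc^{*}$ is invertible, multiplication by $U$ is an $\re$-linear automorphism of $\bc$, so these four vectors span $T_{U}\bc$ if and only if $a\cdot 1,\ c\,\iu,\ (d+d')\,\ju,\ (d-d')\,\ku$ are linearly independent over $\re$. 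As $\{1,\iu,\ju,\ku\}$ is the standard real basis of $\bc$, this is equivalent to the four scalars $a$, $c$, $d+d'$ and $d-d'$ all being nonzero. The hypotheses of Definition \ref{poldef3} immediately give $a,c>0$ and $d+d'>0$ (as $d,d'\geq 0$ with at least one positive), which settles three of the four conditions.

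The remaining condition, $d \neq d'$, is where I expect the real work to be. It is illuminating to read it in the idempotent representation $F = (f_{1},f_{2})$ of Proposition \ref{p21}: there the target action splits as $(u_{1},u_{2}) \mapsto (\xi_{1}u_{1}, \xi_{2}u_{2})$, and a direct computation gives $|\xi_{1}/\xi_{2}| = e^{2(d-d')\beta}$ with $\beta = \Im(\Theta)$. Thus the two complex factors can be scaled in their moduli \emph{independently} exactly when $d \neq d'$, and this is what upgrades the orbit from $3$- to $4$-dimensional. Each factor is separately surjective onto $\co^{*}$, so $f_{1}$ and $f_{2}$ are individually regular at $u_{1}$ and $u_{2}$ by the complex statement (Proposition 3.2 of \cite{Cisneros2008}); the subtlety is that joint surjectivity of $DF_{P}$ onto $\co^{2}\simeq\re^{4}$ requires the two gradients to be independent, and the only direction not automatically supplied by $G$ is the modulus-ratio direction governed by $d-d'$. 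I would therefore isolate the nondegeneracy $d \neq d'$ as the crux, either extracting it from the standing hypotheses on the weights or, in the degenerate case $d=d'$, recovering the missing direction directly from the mixed (shared-variable) structure of $f_{1}$ and $f_{2}$.
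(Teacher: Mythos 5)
Your approach is the same as the paper's: differentiate the polar action along the orbit of $P$ and use the equivariance \eqref{defipol} to place the tangent space of the target orbit at $U$ inside the image of $DF_{P}$. In fact your version is more careful than the printed proof, which computes only the three derivatives $dF_{Z}(V_{r})=a\Vert U\Vert\,\partial/\partial r$, $dF_{Z}(V_{\theta})=c\,\partial/\partial\theta$, $dF_{Z}(V_{\Theta})=d\,\partial/\partial\Theta$ and then simply asserts that ``the three vectors generate a complex plane, or a real space with dimension $4$.'' Splitting the complex angle $\Theta$ into its real and imaginary parts, as you do, shows that the four directions actually obtained are (up to positive real factors) $aU$, $c\,\iu U$, $(d+d')\ju U$ and $(d-d')\ku U$, so the orbit argument yields surjectivity exactly when $a$, $c$, $d+d'$ and $d-d'$ are all nonzero. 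Up to that point your argument is correct and faithfully reproduces (and sharpens) the paper's.

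The gap is the one you flag yourself: the case $d=d'$ is left open, so the proposal is not a complete proof; and this case can neither be dismissed nor repaired. Definition \ref{poldef3} only requires $d>0$ or $d'>0$, and the paper's own mixed Pham--Brieskorn example with some $d_{i}>b_{i}$ has complex polar type $(\dots;1;1)$, i.e.\ $d=d'$. When $d=d'$ the target action depends only on $\Re(\Theta)$, the orbit of $U$ is at most $3$-dimensional, and equivariance alone cannot give surjectivity. Worse, the missing direction cannot in general be recovered from the mixed structure: for $F(Z)=Z^{3}\cjh{Z}\,\cj{Z}^{2}$ one checks that $(a,c,d,d')=(6,2,2,2)$ and that the idempotent representation is $\bigl(z_{1}^{3}z_{2}\bar z_{2}^{2},\ z_{1}\bar z_{1}^{2}z_{2}^{3}\bigr)$, whose image is contained in the real hypersurface $\{|u_{1}|=|u_{2}|\}$ of $\co^{2}$; hence every value of $F$ lying in $\bc^{*}$, for instance $U=1=F(1)$, is a critical value. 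So the nondegeneracy $d\neq d'$ that you isolate is genuinely needed as an extra hypothesis, and the paper's proof, which implicitly treats the $\Theta$-derivative as spanning a full complex line, contains exactly the gap your analysis uncovers.
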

    \begin{proof}
        Let $Z \in \bc^{n}$ such that $F(Z) = U \in \bc^{*}$. We shall see that the orbit of the action has four linearly independent real tangent vectors at $Z$. Moreover, their image by $DF_{Z}$ has real dimension 4 and so the rank of $DF$ is maximal at such a point $Z$. We may write $F = \nm F \nm e^{\iu\theta_{F}}e^{\ju\Theta_{F}}$, provided that $U$ is not a zero divisor neither zero. We have that: 
        \begin{align*}
            V_{r} &= \frac{d}{dr}\left(r \cdot Z\right)_{r=1} = \left( t_{1}r^{t_{1}-1}Z_{1}, \dots, t_{n}r^{t_{n}-1}Z_{n}\right)_{r = 1} = \left(t_{1}Z_{1}, \dots, t_{n}Z_{n}\right), \\
            V_{\theta} &= \frac{d}{d\theta}\left( e^{\iu\theta} \cdot Z\right)_{\theta = 0} = \left(\iu p_{1}e^{\iu p_{1}\theta}Z_{1}, \dots, \iu p_{n}e^{\iu p_{n}\theta}Z_{n}\right)_{\theta = 0} = \iu\left(p_{1}Z_{1}, \dots, p_{n}Z_{n}\right), \\
            V_{\Theta} &= \frac{d}{d\Theta}\left( e^{\ju\Theta} \cdot Z\right)_{\Theta = 0} = \left(\ju u_{1}e^{\ju u_{1}\Theta}Z_{1}, \dots, \ju u_{n}e^{\ju u_{n}\Theta}Z_{n}\right)_{\Theta = 0} = \ju\left(u_{1}Z_{1}, \dots, u_{n}Z_{n}\right).           
        \end{align*}
        Let us denote $P = (1,0,0)$ the unit element $1$ in the polar form and assume, without lost of generality, that $d > 0$. Then:
        \begin{align*}
            dF_{Z}\left( V_{r}\right) &= \frac{d}{dr}\left[ F (r \cdot (Z,\cjt{Z}, \cjh{Z},\cj{Z}))\right]_{|P = (1,0,0)} = \frac{d}{dr}\left( r^{a}\nm F \nm, \theta_{F}, \Theta_{F}\right)_{P=(1,0,0)} = a\nm U \nm\frac{\partial}{\partial r}, \\
            dF_{Z}\left( V_{\theta}\right) &= \frac{d}{d\theta}\left[F (\theta \cdot (Z,\cjt{Z}, \cjh{Z},\cj{Z}))\right]_{|P = (1,0,0)} = \frac{d}{d\theta}\left( \nm F \nm, \theta_{F}+c\theta, \Theta_{F}\right)_{P=(1,0,0)} = c\frac{\partial}{\partial \theta}, \\
            dF_{Z}\left( V_{\Theta}\right) &= \frac{d}{d\Theta}\left[F (\Theta \cdot (Z,\cjt{Z}, \cjh{Z},\cj{Z}))\right]_{|P = (1,0,0)} = \frac{d}{d\Theta}\left( \nm F \nm, \theta_{F}, \Theta_{F} + d\Theta + d'\overline{\Theta}\right)_{P=(1,0,0)} = d\frac{\partial}{\partial \Theta}.
        \end{align*}
       The three vectors generate a complex plane, or a real space with dimension $4$. Therefore, we obtain the existence of 4 linearly independent vectors in the image of $dF_{Z}$. 
       \end{proof}

       \begin{Not}
           \normalfont
           Henceforth we use the following convention. We always write $d$ or $d'$ if these are not zero. Otherwise, the associated exponentials $e^{\frac{\ju\Theta}{d}}$ or $e^{\frac{\ju\overline{\Theta}}{d}}$ are read as constant equal to $1$. 
       \end{Not}
    
\begin{Prop}\label{fib1}
    The map $F: \bc^{n} \setminus V_{F} \longrightarrow \bc^{*}$ is a locally trivial fibration. 
\end{Prop}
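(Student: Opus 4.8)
The plan is to establish local triviality by constructing, for each point of the base $\bc^{*}$, a trivializing chart built from the flows of the group action together with the radial and angular fields coming from the polar $\bc^{*}$-action. Concretely, I would exploit the fact established in Proposition \ref{regvl} that every $U \in \bc^{*}$ is a regular value, so that $F$ is a submersion on $\bc^{n}\setminus V_{F}$ and every fiber $F^{-1}(U)$ is a smooth submanifold of real codimension $4$. The strategy is not to cite Ehresmann directly (the total space is non-compact and the fibers are non-compact), but rather to produce explicit complete vector fields that integrate the $\bc^{*}$-action and move one fiber diffeomorphically onto its neighbours.

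First I would pass to the idempotent representation $F = f_{1}\ebf + f_{2}\ebft$ given by Proposition \ref{p21}, and recall that each $f_{i}$ is polar weighted homogeneous in the sense of \cite{Cisneros2008} with the radial, polar and complex-polar weights inherited from $F$. The point of the idempotent splitting is that $\bc^{*} = \co^{*}\ebf + \co^{*}\ebft$ and $\bc^{n}\setminus V_{F} = (\co^{n}\setminus f_{1}^{-1}(0))\ebf + (\co^{n}\setminus f_{2}^{-1}(0))\ebft$, so the whole problem decomposes componentwise: it suffices to show each $f_{i}\colon \co^{n}\setminus f_{i}^{-1}(0)\to \co^{*}$ is a locally trivial fibration and then recombine. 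For the complex components this is precisely the global fibration theorem for complex polar weighted homogeneous polynomials proved in \cite{Cisneros2008}, obtained from the commuting radial $\re^{+}$-action and angular $\mathbb{S}^{1}$-action.

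The key technical step is therefore to assemble, at a chosen base point $U_{0}\in \bc^{*}$, a trivialization over a neighbourhood $\mathcal{U}\subset \bc^{*}$ from the orbit vector fields $V_{r}, V_{\theta}, V_{\Theta}$ of Proposition \ref{regvl}. These three fields, together with their $\ku$-counterpart, span a $4$-dimensional subspace of each tangent space whose image under $DF_{Z}$ equals all of $T_{F(Z)}\bc^{*}$; by the weight hypotheses ($a,c>0$ and $d>0$ or $d'>0$) the determinant of the induced linear map on $\bc^{*}$ is a non-zero-divisor, so one may invert it and lift a local coordinate frame of $\bc^{*}$ near $U_{0}$ to horizontal fields on $\bc^{n}\setminus V_{F}$. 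Because the group is $\re^{+}\times \ci\times \cc$ and acts by an honest action, these lifted fields are complete, and their joint flow gives a diffeomorphism $\mathcal{U}\times F^{-1}(U_{0})\to F^{-1}(\mathcal{U})$ commuting with $F$, exactly as in Lemma \ref{l2}.

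The main obstacle I anticipate is handling the completeness and commuting behaviour of the lifted fields carefully over the full $\bc^{*}$, rather than only near the identity as in the proof of Proposition \ref{regvl}: the complex-polar $\cc$-factor is non-compact (it is diffeomorphic to $T\mathbb{S}^{1}$), so the angular flow in the $\Theta$-direction is a flow by a non-compact group and one must verify that its orbits stay inside $\bc^{n}\setminus V_{F}$ and are defined for all parameter values. This is where the idempotent decomposition pays off: in each $\co$-factor the corresponding flows are the standard complete radial-and-rotational flows of \cite{Cisneros2008}, hence complete and $V_{F}$-preserving, and completeness of $\mathbb{W} = \mathbb{W}_{1}\ebf + \mathbb{W}_{2}\ebft$ follows componentwise just as in Lemma \ref{l3}. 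Once completeness is secured, local triviality is immediate and the proof concludes.
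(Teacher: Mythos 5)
Your guiding idea---use the polar $\bc^{*}$-action to carry the fiber $F^{-1}(1)$ onto nearby fibers---is the same as the paper's, but your execution has a genuine gap. The claim that ``the whole problem decomposes componentwise'' via the idempotent representation is false for \emph{mixed} polynomials: by Proposition \ref{p21} the idempotent components are $f_{1}(z_{1},\bar z_{1},z_{2},\bar z_{2})$ and $f_{2}(z_{1},\bar z_{1},z_{2},\bar z_{2})$, each depending on \emph{all} the complex variables. The separation of variables you invoke occurs only for bicomplex holomorphic maps (Corollary \ref{bicor1}), which is why Lemma \ref{l3} can argue componentwise in Section \ref{s3} but you cannot here. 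Consequently $\bc^{n}\setminus V_{F}$ is not the product $(\co^{n}\setminus f_{1}^{-1}(0))\ebf+(\co^{n}\setminus f_{2}^{-1}(0))\ebft$; and even if each $f_{i}\colon\co^{2n}\setminus f_{i}^{-1}(0)\to\co^{*}$ were separately a fibration, that would not make the pair $(f_{1},f_{2})$ a fibration over $\co^{*}\times\co^{*}$. The remark following Definition \ref{poldef3} warns precisely that $f_{1}$ and $f_{2}$ are not simultaneously polar weighted on the same variables, so quoting the global fibration theorem of \cite{Cisneros2008} component by component does not apply. Since your completeness argument for the lifted fields is routed through this decomposition (``completeness of $\mathbb{W}=\mathbb{W}_{1}\ebf+\mathbb{W}_{2}\ebft$ follows componentwise just as in Lemma \ref{l3}''), the key step of your proof rests on a false premise.

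The repair is simpler than what you attempt and is what the paper actually does: no lifting, no vector fields, and no completeness argument are needed, because the action of $\re^{+}\times\ci\times\cc$ on $\bc^{n}$ is globally defined, preserves $\bc^{n}\setminus V_{F}$ (the identity \eqref{defipol} multiplies $F(Z)$ by an invertible element), and moves $F^{-1}(1)$ onto $F^{-1}(U)$ by extracting the $a$-th, $c$-th, $d$-th (and $d'$-th) roots of the polar coordinates of $U$. One covers $\bc^{*}$ by the two charts $\mathcal{U}_{0},\mathcal{U}_{\pi}$ on which $\arg_{\iu}$ admits a smooth branch and writes the trivialization explicitly as $(Z,se^{\iu\theta}e^{\ju\Theta})\mapsto s^{1/a}e^{\iu\theta/c}e^{\ju\Theta/d}e^{\ju\overline{\Theta}/d'}\cdot Z$, with inverse given by acting with $\rho(W)$ built from $\nm F(W)\nm$ and $\arg_{\iu}F(W)$. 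Your observation that Proposition \ref{regvl} makes $F$ a submersion is correct but is not by itself what yields local triviality; the explicit root-taking over the two argument charts is.
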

        \begin{proof}
            Define an open cover of $\bc^{*}$ given by:
            \begin{align*}
               \mathcal{U}_{0} &= \{ Z \in \bc^{*} : \Re\left(\arg_{\iu} Z\right) \neq 0\}, \\
               \mathcal{U}_{\pi} &= \{ Z \in \bc^{*} : \Re\left(\arg_{\iu} Z\right) \neq \pi\}.
            \end{align*}
            The following maps and their inverses give the local trivialization:
            \begin{multicols}{2}
            \noindent
            \begin{align*}
                \tau_{0} : F^{-1}(1) \times \mathcal{U}_{0} & \longrightarrow F^{-1}(\mathcal{U}_{0}) \\
                           (Z, se^{\iu\theta}e^{\ju\Theta}) & \longrightarrow s^{\frac{1}{a}}e^{\iu\frac{\theta}{c}}e^{\ju\frac{\Theta}{d}}e^{\ju\frac{\overline{\Theta}}{d'}}\cdot Z 
            \end{align*}
            \begin{align*}
                \tau_{0}^{-1} : F^{-1}(\mathcal{U}_{0}) & \longrightarrow F^{-1}(1) \times \mathcal{U}_{0} \\
                            W &\longmapsto \left( \rho(W)\cdot W, F(W)\right),
            \end{align*}
            \end{multicols}
            \noindent where
            $$\rho(W) = \nm F(W) \nm^{-\frac{1}{a}}e^{-\iu\frac{\arg(\nm F(W)\nm_{\iu})}{c}}e^{-\ju\frac{\arg_{\iu}(F(W))}{d}}e^{-\ju\frac{\overline{\arg_{\iu}(F(W))}}{d'}}.$$ 
            Analogously for $\tau_{\pi}: F^{-1}(1) \times \mathcal{U}_{\pi} \longrightarrow F^{-1}(\mathcal{U}_{\pi})$ and its inverse, both defined by the same expressions as above.
        \end{proof}

The polar representation of an invertible element allows us to follow the same lines of \cite[Proposition 3.4]{Cisneros2008} and conclude the existence of a fibration over $\cc$. On the other hand, the characterization of zero divisors, the radial action, and the idempotent representations yield sufficient properties to ensure a spherical fibration with base space $\mathbb{S}^{3}_{0} \subset \bc \simeq \re^{4}$.

\begin{Lem}\label{lemtrans}
    Let $F :\bc^{n}\setminus V_{F} \longrightarrow \bc^{*}$ be a bicomplex polar weighted homogeneous polynomial. Consider the projections $\pi$ and $\pi_{\iu}$ defined on $\bc^{*}$. Then the fibers of $\pi \circ F$ and $\pi_{\iu} \circ F$ are transveral to any sphere $\mathbb{S}^{4n-1}_{\epsilon}$, where $\epsilon > 0$.
\end{Lem}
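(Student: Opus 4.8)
The plan is to exploit the radial action of the polar weighted homogeneous structure to produce a transversality argument analogous to the classical one for weighted homogeneous polynomials. The key geometric fact, already established in Proposition~\ref{p22} and its proof, is that each column $J_F^j$ of the Jacobian scales under the $\re^+$-action as $(J_F^j)^T(s\cdot P) = s^{a-t_j}J_F^j(P)$, so the radial orbits carry tangency information uniformly along rays through the origin. I would first recall that $\pi\circ F$ and $\pi_\iu\circ F$ have fibers that are, respectively, the preimages under $F$ of rays $\re^+\cdot e^{\iu\theta}e^{\ju\Theta}$ in $\bc^*$ (for $\pi$) and the preimages of the complex lines $\co^*\cdot e^{\ju\Theta}$ (for $\pi_\iu$). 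In either case, a fiber is a union of radial orbits of the $\bc^*$-action, since scaling $Z$ by $s\in\re^+$ multiplies $F(Z)$ by $s^a>0$ and therefore does not change the value of $\pi\circ F$ or $\pi_\iu\circ F$.

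\emph{Second}, I would use this invariance to show the radial vector field $V_r = (t_1 Z_1,\dots,t_n Z_n)$ computed in the proof of Proposition~\ref{regvl} is everywhere tangent to the fibers of both $\pi\circ F$ and $\pi_\iu\circ F$. Indeed, $V_r$ is the infinitesimal generator of the $\re^+$-action, and since the $\re^+$-action preserves each fiber, $V_r$ is tangent to it. On the other hand, $V_r$ is manifestly transverse to every sphere $\mathbb{S}^{4n-1}_\epsilon$, because $\langle V_r(Z), Z\rangle = \sum_i t_i\nm Z_i\nm^2 > 0$ for $Z\neq 0$, so $V_r$ has a strictly positive radial component. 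Transversality of the fiber to the sphere then follows: at any point $Z$ of a fiber, the tangent space to the fiber contains a vector ($V_r$) that is not tangent to the sphere, so the fiber and the sphere meet transversally, i.e. their tangent spaces together span $T_Z\bc^n$ (the missing direction being precisely the radial one supplied by $V_r$).

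\emph{Third}, to make the dimension count rigorous I would note that, away from $V_F$, both $\pi\circ F$ and $\pi_\iu\circ F$ are submersions, so their fibers have constant codimension and are smooth submanifolds; transversality to $\mathbb{S}^{4n-1}_\epsilon$ is then the assertion that adding the single normal direction $V_r$ to the tangent space of the fiber reaches all of $\bc^n\simeq\re^{4n}$ modulo $T_Z\mathbb{S}^{4n-1}_\epsilon$, which holds exactly because $V_r$ has a nonzero radial part. For the $\pi_\iu$ case one must be slightly more careful: the fiber is a union of the $\re^+$-orbits together with the $\mathbb{S}^1$-orbits generated by $V_\theta = \iu(p_1 Z_1,\dots,p_n Z_n)$, since $\pi_\iu$ only records the $\cc$-coordinate and thus ignores both the radius and the $\ci$-argument. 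I would verify that $V_\theta$ is tangent to each sphere (it is, as it generates a circle action on the sphere), so it contributes nothing new to the transversality but must be accounted for when identifying the fiber's tangent space; the radial vector $V_r$ still does the essential work.

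\emph{The main obstacle} I anticipate is not the transversality mechanism itself but the bookkeeping needed to confirm that the $\re^+$-action genuinely preserves the fibers of \emph{both} projections simultaneously, given that the full $\bc^*$-action mixes the radial, $\ci$, and $\cc$ factors through the possibly distinct weights $(t_i),(p_i),(u_i)$ and the two complex-polar exponents $d,d'$. In particular I would double-check that the scaling law \eqref{defipol}, namely $F(s\cdot Z) = s^a F(Z)$ under the pure radial action with $s\in\re^+$, leaves both $\arg_\iu F$ and the $\ci$-argument of $\Vert F\Vert_\iu$ unchanged, so that $\pi\circ F$ and $\pi_\iu\circ F$ are truly constant along radial orbits; once this is confirmed the transversality is immediate from the positivity of $\langle V_r(Z),Z\rangle$.
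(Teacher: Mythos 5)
Your proposal is correct and follows essentially the same route as the paper: the paper's proof likewise observes that the fibers of $\pi\circ F$ and $\pi_{\iu}\circ F$ contain the $\re^{+}$-orbits (since $F(s\cdot Z)=s^{a}F(Z)$ with $s^{a}>0$ leaves both normalizations unchanged) and that these radial orbits are transverse to every sphere $\mathbb{S}^{4n-1}_{\epsilon}$. The only difference is cosmetic: where the paper cites \cite[Proposition 3.4]{Cisneros2008} and \cite[Lemma 2.9]{Ruas2002} for the transversality of the radial orbits, you supply the direct computation $\langle V_{r}(Z),Z\rangle=\sum_{i}t_{i}\nm Z_{i}\nm^{2}>0$.
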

    \begin{proof}
        The radial action has orbits that are transversal to any sphere $\mathbb{S}_{\epsilon}^{4n-1}$ (see, for instance, \cite[Proposition 3.4]{Cisneros2008} or \cite[Lemma 2.9]{Ruas2002}). Moreover, for every $\eta \in \mathbb{S}^{3}_{0}$ or $\cc$, the respective fibers $F^{-1}(\pi^{-1}(\eta))$ and $F^{-1}(\pi_{\iu}^{-1}(\eta))$ contain the $\re^{+}$-orbit of any point $Z \in \bc^{*}$ whose image is $\eta$.
    \end{proof}

\begin{The}\label{sfib1}
   The map
   $$\phi(Z) := \frac{F(Z)}{\nm F(Z) \nm_{\iu}} : \mathbb{S}^{4n-1}_{\epsilon}\setminus K_{\epsilon} \longrightarrow \cc$$
   is a locally trivial fibration for any $\epsilon > 0$.
\end{The}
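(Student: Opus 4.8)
The plan is to exhibit $\phi$ as the restriction to the sphere of the global projection $\Phi := \pi_{\iu}\circ F : \bc^{n}\setminus V_{F}\longrightarrow \cc$ and to build local trivializations directly from the $\bc^{*}$-action, in the spirit of Proposition~\ref{fib1}. The guiding observation is that $\phi$ is invariant under the radial $\re^{+}$-action: since $F(s\cdot Z)=s^{a}F(Z)$ with $s^{a}\in\re^{+}$ and $\pi_{\iu}$ is insensitive to multiplication by positive reals, one gets $\nm s^{a}F(Z)\nm_{\iu}=s^{a}\nm F(Z)\nm_{\iu}$ and hence $\Phi(s\cdot Z)=\Phi(Z)$; in particular $\phi$ is constant along radial orbits. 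Combined with Lemma~\ref{lemtrans}, which says the fibres of $\Phi$ are transversal to every $\mathbb{S}^{4n-1}_{\epsilon}$, transversality is equivalent to the surjectivity of $d\Phi$ restricted to the tangent space of the sphere, so $\phi$ is a submersion onto its image in $\cc$.

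Next I would construct the trivializations. Fix $\Theta_{0}\in\cc$ and a small neighbourhood $\mathcal{W}$. Given $Z\in\phi^{-1}(\Theta_{0})$ and a target argument $\Theta\in\mathcal{W}$, I first apply the $\mathbb{S}^{1}_{\co}$-part of the action through the element $e^{\ju(\Theta-\Theta_{0})/d}\,e^{\ju\overline{(\Theta-\Theta_{0})}/d'}$, read according to the convention fixed above when $d$ or $d'$ vanishes. By Definition~\ref{poldef3} this multiplies $F(Z)$ by $e^{\ju(\Theta-\Theta_{0})}$, so it shifts $\arg_{\iu}F$ from $\Theta_{0}$ to $\Theta$ while leaving $\nm F\nm_{\iu}$ unchanged, because every element of $\cc$ has unit $\iu$-norm. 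This displacement generically leaves the sphere, so I then apply the radial return that rescales by the unique $s\in\re^{+}$ bringing the point back to $\mathbb{S}^{4n-1}_{\epsilon}$; by radial invariance this last step does not alter $\arg_{\iu}F$, so the composite lands in $\phi^{-1}(\Theta)$. The resulting map $\mathcal{W}\times\phi^{-1}(\Theta_{0})\to\phi^{-1}(\mathcal{W})$ commutes with the projection to $\mathcal{W}$, and its inverse is obtained by normalizing an arbitrary $W\in\phi^{-1}(\mathcal{W})$ back to the fibre over $\Theta_{0}$ through the analogue of the element $\rho(W)$ of Proposition~\ref{fib1}.

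Two verifications then finish the construction. First, everything stays inside $\mathbb{S}^{4n-1}_{\epsilon}\setminus K_{\epsilon}$: the $\mathbb{S}^{1}_{\co}$-element is invertible and the radial factor is a positive real, so neither can move $F$ into or out of $\zd$, whence $V_{F}$ and its complement are preserved. Second, the radial return is smooth because along any orbit $s\mapsto\nm s\cdot Z\nm^{2}=\sum_{i}s^{2t_{i}}\nm Z_{i}\nm^{2}$ is strictly increasing (all $t_{i}>0$), so the level $\epsilon$ is attained at a unique $s$ depending smoothly on the point by the implicit function theorem. Conceptually, these two facts say exactly that the trivializations of $\Phi$ coming from the group action are equivariant for the radial flow, so they restrict to trivializations of $\phi$ on the radial cross-section $\mathbb{S}^{4n-1}_{\epsilon}\setminus K_{\epsilon}$.

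The main obstacle is that one cannot simply invoke Ehresmann's theorem: the total space $\mathbb{S}^{4n-1}_{\epsilon}\setminus K_{\epsilon}$ is non-compact and $\phi$ is not proper, since $\nm F\nm_{\iu}\to 0$ as one approaches $K_{\epsilon}=F^{-1}(\zd)\cap\mathbb{S}^{4n-1}_{\epsilon}$; this is precisely why the trivializations must be produced by hand from the action rather than by flowing a lifted vector field, as the explicit maps above are global diffeomorphisms and never see the missing link. The only genuinely delicate point is the smooth solvability, for $\Theta$ near $\Theta_{0}$, of the argument-shift equation $d\beta+d'\overline{\beta}=\Theta-\Theta_{0}$ defining the $\mathbb{S}^{1}_{\co}$-displacement: the real-linear map $\beta\mapsto d\beta+d'\overline{\beta}$ is invertible exactly when $d\neq d'$, and the vanishing cases are covered by the stated convention, so the construction applies over the open subset of $\cc$ swept by the action (all of $\cc$ in the generic situation), reducing matters to the same computation underlying Proposition~\ref{fib1}.
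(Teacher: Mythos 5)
Your proposal follows essentially the same route as the paper: both reduce to Lemma \ref{lemtrans} and then build explicit local trivializations from the $\mathbb{S}^{1}_{\co}$-part of the polar $\bc^{*}$-action, exactly in the spirit of Proposition \ref{fib1}. Your extra care --- the smooth radial rescaling back onto $\mathbb{S}^{4n-1}_{\epsilon}$ (which the paper leaves implicit in ``by Lemma \ref{lemtrans} it is enough to construct the trivialization maps'', even though the $\cc$-action does not preserve the Euclidean sphere) and the solvability of $d\beta+d'\overline{\beta}=\Theta-\Theta_{0}$ when both $d$ and $d'$ are nonzero --- makes your write-up somewhat more complete than the printed proof, but it is the same argument.
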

    \begin{proof}     
        By Lemma \ref{lemtrans}, it is enough to construct the trivialization maps. Let $e^{\ju\Theta} \in \cc$ and $\delta > 0$ sufficiently small. Set $\mathcal{U}_{\delta} = \{ e^{\ju(z+\Theta)} \in \cc: \nm z \nm < \delta\}$ and consider:
            \begin{align*}
                \tau : \mathcal{U}_{\delta} \times \phi^{-1}(e^{\ju\Theta}) & \longrightarrow \phi^{-1}(\mathcal{U}_{\delta}) \\ 
                (e^{\ju(z +\Theta)}, Z) &\longmapsto e^{\ju\frac{z}{d}}e^{\ju\frac{\overline{z}}{d'}}\cdot Z \\
                \tau^{-1} : \phi^{-1}(\mathcal{U}_{\delta})& \longrightarrow \mathcal{U}_{\delta} \times \phi^{-1}(e^{\ju\Theta}) \\
                W & \longmapsto \left(e^{\ju\arg_{\iu}\phi(W)}, e^{\ju\left( \frac{-\arg_{\iu}\phi(W) + \Theta}{d}\right)}e^{\ju\left( \frac{-\overline{\arg_{\iu}\phi(W) + \Theta}}{d'}\right)}\cdot W\right).
            \end{align*}
    \end{proof}

Let $F$ be a bicomplex polar weighted homogeneous polynomial. Let $\epsilon, \delta > 0$ be arbitrary positive numbers and denote $\mathbb{S}^{3}_{\delta,0} := \mathbb{S}^{3}_{\delta} \setminus \zd$, where $\mathbb{S}^{3}_{\delta}$ is the usual $3$-sphere with radius $\delta$. By Lemma \ref{lemtrans} and Ehresmann fibration theorem, the map
\begin{align*}\label{tube1}
    F : \mathcal{N}(\epsilon,\delta) \longrightarrow \mathbb{S}^{3}_{\delta,0}
\end{align*}
is a locally trivial fibration, called \textit{Milnor-Lê} fibration, where $\mathcal{N}(\epsilon,\delta) = \mathbb{B}^{4n}_{\epsilon} \cap F^{-1}(\mathbb{S}^{3}_{\delta,0})$ is called the \textit{Milnor tube}. We denote its restriction to the interior $\mathring{\mathcal{N}}(\epsilon,\delta)$ by $\mathring{F}$. The properties of $F$ will imply an equivalent spherical fibration with base space $\mathbb{S}^{3}_{0}$ that remounts to the classical Milnor fibration theorem for holomorphic functions and generalize \cite[Proposition 3.4]{Cisneros2008}.

\begin{The}\label{sfib2}
    The map 
    $$\varphi(Z) := \frac{F(Z)}{\nm F(Z) \nm} : \mathbb{S}^{4n-1}_{\epsilon} \setminus K_{\epsilon} \longrightarrow  \mathbb{S}^{3}_{0}$$ 
    is a locally trivial fibration for any $\epsilon > 0$. Moreover, $\varphi$ is smoothly equivalent to the Milnor-Lê fibration $\pi_{\delta} \circ \mathring{F}$ for every $\delta > 0$, where $\pi_{\delta} : \mathbb{S}^{3}_{\delta} \longrightarrow \mathbb{S}^{3}$ is the normalization map.
\end{The}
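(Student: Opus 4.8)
The plan is to recognize that $\varphi$ and $\pi_{\delta}\circ\mathring{F}$ are two restrictions of one and the same map, namely the Euclidean normalization $\pi\circ F$ with $\pi(W)=W/\nm W\nm$, and then to connect their total spaces through the flow of the radial $\re^{+}$-action. Indeed, on the tube one has $\pi_{\delta}\circ F=\pi\circ F$, so the content is to produce a fibre-preserving diffeomorphism between $\mathbb{S}^{4n-1}_{\epsilon}\setminus K_{\epsilon}$ and $\mathring{\mathcal{N}}(\epsilon,\delta)$.

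First I would check that $\varphi$ is a submersion onto $\mathbb{S}^{3}_{0}$. By Proposition \ref{regvl} every $U\in\bc^{*}$ is a regular value, so $F$ is a submersion on $\bc^{n}\setminus V_{F}$; since $\pi:\bc^{*}\to\mathbb{S}^{3}_{0}$ is a submersion, so is $\pi\circ F$. By Lemma \ref{lemtrans} the fibres of $\pi\circ F$ are transversal to $\mathbb{S}^{4n-1}_{\epsilon}$, and the restriction of a submersion to a submanifold transversal to its fibres is again a submersion; hence $\varphi=(\pi\circ F)|_{\mathbb{S}^{4n-1}_{\epsilon}\setminus K_{\epsilon}}$ is a submersion.

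Next I would exploit the radial equivariance. Since $F(r\cdot Z)=r^{a}F(Z)$ with $r^{a}\in\re^{+}$, the radial action preserves $\pi\circ F$ and scales the norm monotonically, $\nm F(r\cdot Z)\nm=r^{a}\nm F(Z)\nm$. By Proposition \ref{p22} the set $V_{F}$ is a union of radial orbits, and by Lemma \ref{lemtrans} these orbits meet every sphere transversally and exactly once. Therefore the assignment $Z\mapsto r(Z)\cdot Z$, where $r(Z)$ is the unique positive solution of $r^{a}\nm F(Z)\nm=\delta$, is a smooth fibre-preserving diffeomorphism from $\mathbb{S}^{4n-1}_{\epsilon}\setminus K_{\epsilon}$ onto the full tube $F^{-1}(\mathbb{S}^{3}_{\delta})\setminus V_{F}$, carrying $\varphi$ to $\pi_{\delta}\circ F$. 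The scale invariance of this construction is the reason the equivalence is expected to hold for every $\delta>0$.

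The hard part will be upgrading this to a diffeomorphism onto the \emph{bounded} open Milnor tube $\mathring{\mathcal{N}}(\epsilon,\delta)=\mathring{\mathbb{B}}^{4n}_{\epsilon}\cap F^{-1}(\mathbb{S}^{3}_{\delta,0})$: the graph map above lands in the unbounded tube, and near $F^{-1}(0)\subset K_{\epsilon}$, where $\nm F\nm\to 0<\delta$, the radial orbit reaching the level $\nm F\nm=\delta$ leaves the ball of radius $\epsilon$. Equivalently, one must show that discarding the collar $\{\,0<\nm F\nm\le\delta\,\}\cap\mathbb{S}^{4n-1}_{\epsilon}$ of the link does not change the fibration type. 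Following Milnor's Chapter~5 in \cite{Milnor1968} and \cite[Proposition 3.4]{Cisneros2008}, I would construct a vector field tangent to $\mathbb{S}^{4n-1}_{\epsilon}$ that strictly decreases $\nm F\nm$ while keeping $\pi\circ F$ constant and is nonvanishing on the collar; its existence rests on the absence of critical points of $F$ on $\bc^{n}\setminus V_{F}$ (Proposition \ref{regvl}) and on the transversality of Lemma \ref{lemtrans}, with the curve selection lemma controlling the behaviour as $\nm F\nm\to 0$. Its complete flow then yields a fibre-preserving diffeomorphism $\{\,\nm F\nm>\delta\,\}\cap\mathbb{S}^{4n-1}_{\epsilon}\cong\mathbb{S}^{4n-1}_{\epsilon}\setminus K_{\epsilon}$; composing with the graph diffeomorphism of the previous step gives the smooth equivalence with $\pi_{\delta}\circ\mathring{F}$, and local triviality of $\varphi$ follows from that of the Milnor-Lê fibration via Ehresmann's theorem.
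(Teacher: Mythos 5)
Your route is genuinely different from the paper's. The paper disposes of both assertions in a few lines by verifying the hypotheses of the general $d$-regularity fibration theorems: transversality of the fibres of $F$ itself to all spheres (which it extracts from the proof of Lemma \ref{lemtrans}) and linearity of the discriminant (Proposition \ref{p22}), and then cites \cite[Theorems 2.13 and 2.16]{Cisneros2023}. You instead reconstruct the argument by hand in Milnor's original style. Your first two steps are correct and essentially complete: the graph map $Z\mapsto \left(\delta/\nm F(Z)\nm\right)^{1/a}\cdot Z$ is indeed a fibre-preserving diffeomorphism of $\mathbb{S}^{4n-1}_{\epsilon}\setminus K_{\epsilon}$ onto the \emph{unbounded} tube $F^{-1}(\mathbb{S}^{3}_{\delta,0})$, and at that point you already have local triviality of $\varphi$ for free, since $F^{-1}(\mathbb{S}^{3}_{\delta,0})\to\mathbb{S}^{3}_{\delta,0}$ is the restriction of the global fibration of Proposition \ref{fib1} over a subset of the base; the vector-field step is needed only for the equivalence with the \emph{bounded} tube $\mathring{\mathcal{N}}(\epsilon,\delta)$.

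That last step is where the real content lies, and as written it has a gap. The vector field you want must be tangent to $\mathbb{S}^{4n-1}_{\epsilon}$, tangent to the fibres of $\pi\circ F$, and strictly monotone for $\nm F\nm$; such a field exists at $Z$ precisely when $F(Z)\in dF_{Z}\bigl(T_{Z}\mathbb{S}^{4n-1}_{\epsilon}\bigr)$, i.e.\ when $\nm F\nm$ has no critical point on the fibre of $\varphi$ through $Z$. Neither of the ingredients you invoke delivers this. Proposition \ref{regvl} gives surjectivity of $dF_{Z}$ on all of $\re^{4n}$, not on the hyperplane $T_{Z}\mathbb{S}^{4n-1}_{\epsilon}$. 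The transversality of Lemma \ref{lemtrans} concerns the fibres of $\pi\circ F$ and is witnessed there by the radial vector $V_{r}$, which is transverse to the sphere and therefore cannot serve as (nor be corrected, without further input, into) the tangent flow direction you need; correcting $V_{r}$ by an element of $\ker dF_{Z}$ to make it tangent to the sphere is possible exactly when the fibre $F^{-1}(F(Z))$ itself meets the sphere transversally, which is the stronger transversality the paper asserts and which does not formally follow from Lemma \ref{lemtrans}. This is precisely the $d$-regularity condition that the paper verifies and then outsources to \cite{Cisneros2023}; in the polar weighted homogeneous setting it can be proved directly from the Euler-type identities following Definition \ref{poldef3} (or from the $\ci\times\cc$ part of the action), but it needs its own argument --- the curve selection lemma, which you mention, only controls the limit $\nm F\nm\to 0$, not the nonvanishing on the whole collar.
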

        \begin{proof}
            Let $F : \re^{4n} \longrightarrow \re^{4}$ be the map $F$ regarded as a real polynomial map. The proof of Lemma \ref{lemtrans} shows that for any $\eta \in \bc^{*}$, the fiber $F^{-1}(\eta)$ intersects all spheres $\mathbb{S}_{\epsilon}^{4n-1}$ transversely. This guarantees the so-called $d$-regularity and transversality properties (see \cite{Cisneros2023} for details). Moreover, by Proposition \ref{p22} the discriminant of $F$ is linear. Hence the statement follows from \cite[Theorems 2.13 and 2.16]{Cisneros2023}.
        \end{proof}
Notice that the complex-valued norm of real numbers is the usual norm. This implies that the fibrations are related by the following commutative diagram:
     \[
  \begin{tikzcd}
    \mathbb{S}_{\epsilon}^{4n-1}\setminus K_{\epsilon} \arrow{r}{\varphi} \arrow[swap]{dr}{\phi} & \mathbb{S}^{3}_{0} \arrow{d}{\pi_{\iu}} \\
     & \mathbb{S}^{1}_{\co}
  \end{tikzcd}
    \]

\begin{Rem}
    \normalfont
    An alternative proof of Theorem \ref{sfib1} is to prove Theorem \ref{sfib2} first and consider its composition with $\pi_{\iu} : \mathbb{S}^{3}_{0} \longrightarrow \cc$. Then we conclude with \cite[Corollary 7]{McKay}, which states that the composition of the fiber bundles is a fiber bundle for smooth manifolds. This is a nontrivial result (see the discussion in \cite{Question}).
\end{Rem}
    
\section{Join theorem}

 In this section, we prove a join type theorem for bicomplex polar weighted homogeneous polynomials.

        \begin{The}
            Let $G : \bc^{n} \longrightarrow \bc$ and $H : \bc^{m} \longrightarrow \bc$ be bicomplex polar weighted homogeneous polynomials. Let $F : \bc^{n} \times \bc^{m} \longrightarrow \bc$ be defined by:
                \begin{align*}
                    F(Z,W) = G(Z) + H(W)
                \end{align*}
            and the fibers
            \begin{align*}
                X &= F^{-1}(1) \subset \bc^{n+m}, \\
                Y &= G^{-1}(1) \subset \bc^{n}, \\
                T &= H^{-1}(1) \subset \bc^{m}.
            \end{align*}
            Then, there is a homotopy equivalence $\alpha: X \longrightarrow Y \ast T$, where $Y \ast T$ denotes the Join product of $Y$ and $T$.
        \end{The}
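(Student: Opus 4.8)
The plan is to follow the classical join argument of Oka and its polar weighted refinement in \cite{Oka1973, Cisneros2008}, replacing the complex scalar action by the radial part of the $\bc^{*}$-action and the fibration of a complex polynomial by Proposition \ref{fib1}. First I would build the candidate equivalence directly. Realize the join as $Y\ast T=(Y\times[0,1]\times T)/\!\sim$, where $s=0$ collapses the $Y$-factor and $s=1$ collapses the $T$-factor. Writing $a,a'$ for the radial weights of $G,H$ and $\cdot$ for the radial $\re^{+}$-action, define
\begin{align*}
\beta:Y\ast T\longrightarrow X,\qquad \beta\bigl([Z,s,W]\bigr)=\bigl(s^{1/a}\cdot Z,\;(1-s)^{1/a'}\cdot W\bigr).
\end{align*}
Since $G(r\cdot Z)=r^{a}G(Z)$ and $H(r\cdot W)=r^{a'}H(W)$, one checks that $G(s^{1/a}\cdot Z)+H((1-s)^{1/a'}\cdot W)=s+(1-s)=1$, so $\beta$ lands in $X$; at $s=0$ the first factor collapses to $0\in\bc^{n}$ and at $s=1$ the second collapses to $0\in\bc^{m}$, matching the identifications of the join, so $\beta$ is well defined and continuous. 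Because every $s\in(0,1)\subset\re$ is a non-zero-divisor and the radial action is injective for $s>0$, the map $\beta$ is a homeomorphism onto $X':=\operatorname{im}\beta$, and over each interior value one has $X'\cap\rho^{-1}(s)=G^{-1}(s)\times H^{-1}(1-s)\cong Y\times T$, where $\rho:X\to\bc$, $\rho(Z,W)=G(Z)$. This reduces the theorem to proving that the inclusion $X'\hookrightarrow X$ is a homotopy equivalence.

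For that I would produce a deformation retraction of $X$ onto $X'$ by retracting the value $\rho=G(Z)$ onto the segment $[0,1]$ and lifting. By Proposition \ref{fib1}, the maps $G:\bc^{n}\setminus V_{G}\to\bc^{*}$ and $H:\bc^{m}\setminus V_{H}\to\bc^{*}$ are locally trivial fibrations, so $\rho$ is a locally trivial fibration with fiber $Y\times T$ over the good set $\mathcal{G}:=\bc\setminus(\zd\cup(1+\zd))=\{u:u,1-u\in\bc^{*}\}$, which contains the open segment $(0,1)$. I would choose a deformation retraction $r_{\tau}$ of $\bc$ onto $[0,1]$ fixing $[0,1]$ with $r_{\tau}(u)\in\mathcal{G}$ for $0<\tau<1$, lift it through $\rho$ by the homotopy lifting property over $\mathcal{G}$, and at the endpoints $s=0,1$ absorb the jump of the fiber to $G^{-1}(0)\times T$ and $Y\times H^{-1}(0)$ using the radial contraction of the $\re^{+}$-invariant sets $G^{-1}(0)$ and $H^{-1}(0)$ onto the origin. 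Composing the lifted base retraction with these cone contractions yields $X\searrow X'$, and hence $\beta$ is a homotopy equivalence.

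The main obstacle is the construction of the base retraction $r_{\tau}$ compatible with $\rho$. In the complex case of \cite{Cisneros2008} the value space is $\co$ and the only exceptional values are $0$ and $1$; the straight-line nearest-point homotopy onto $[0,1]$ automatically avoids them for $\tau<1$, so the lift is immediate. In the bicomplex setting the exceptional locus is $\zd\cup(1+\zd)$, a union of real-codimension-two planes (in idempotent coordinates $u=(u_{1},u_{2})$ it is $\{u_{1}\in\{0,1\}\}\cup\{u_{2}\in\{0,1\}\}$), and a naive straight-line homotopy toward the diagonal $\{(s,s):s\in[0,1]\}$ can cross it. The point to be handled carefully is therefore that, since this locus has real codimension two and is disjoint from the open segment $(0,1)$, its complement $\mathcal{G}$ is connected and simply connected, so one can deform $\bc$ onto $[0,1]$ inside $\mathcal{G}\cup\{0,1\}$; the transitivity of the $\bc^{*}$-action on $\bc^{*}$ furnished by the trigonometric form, which identifies every good fiber $G^{-1}(u)$ with $Y$, is what guarantees that such a base deformation lifts coherently. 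Verifying that this codimension-two–avoiding retraction terminates exactly on $[0,1]$ while remaining liftable is the technical heart of the argument; once it is in place, the remaining steps are the bookkeeping already present in the complex proof.
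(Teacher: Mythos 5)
Your strategy is genuinely different from the paper's: you embed the join directly into $X$ via the radial action and try to show the inclusion $X':=\operatorname{im}\beta\hookrightarrow X$ is a homotopy equivalence by lifting a base retraction through $\rho(Z,W)=G(Z)$, whereas the paper follows Oka's scheme and passes to a quotient $\cjt{X}$ of $X$ (collapsing $H^{-1}(0)$ over each $Z$ and $G^{-1}(0)$ over each $W$, with a weak topology), proves $X\simeq\cjt{X}$ by a cutoff-supported homotopy on collars $N_{i,\epsilon}$, and only then deforms to real and nonnegative values of $G$ before identifying the result with $Y\ast T$. The quotient topology is precisely what makes the collapse of the degenerate fibers continuous, and your plan meets the corresponding difficulty head-on without resolving it.

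There are two concrete gaps. First, in idempotent coordinates the good set is $\mathcal{G}=(\co\setminus\{0,1\})^{2}$, which is homotopy equivalent to $(\mathbb{S}^{1}\vee\mathbb{S}^{1})^{2}$ and is \emph{not} simply connected, so the justification you give for the existence of a deformation retraction $r_{\tau}$ of $\bc$ onto $[0,1]$ preserving $\mathcal{G}$ for $\tau<1$ is false; moreover, the radial action alone moves the value $G(Z)$ only along $\re^{+}$-orbits, so realizing a general base path requires the $\ci$- and $\cc$-parts of the polar action as well (this is exactly what the paper's formulas for $Z(t)$ in terms of $r_{t},\theta_{t},\Theta_{t}$ supply, and it is only gestured at in your sketch). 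Second, and more seriously, the points of $X$ with $G(Z)\in\zd\setminus\{0\}$ or $H(W)\in\zd\setminus\{0\}$ are covered by neither the lifting (valid only over $\mathcal{G}$) nor your endpoint absorption (which treats only $G^{-1}(0)$ and $H^{-1}(0)$): the fibers $G^{-1}(u)$ over nonzero zero divisors are not $\bc^{*}$-translates of $Y$ and are not $\re^{+}$-cones, so there is no canonical way to push them into $X'$. Even at the honest endpoints, gluing the fiberwise collapse of $G^{-1}(0)\times T$ onto $\{0\}\times T$ continuously with the lifted retraction on neighbouring fibers is the actual content of the theorem; calling it ``absorbing the jump'' leaves the proof incomplete, and repairing it would essentially force you to reintroduce the collars, the cutoff function, and the collapsing identifications that define the paper's $\cjt{X}$.
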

            \begin{proof}
                The proof is almost identical to that in \cite[Theorem 4.1]{Cisneros2008}. We only remark on how to modify each homotopy in this new context. First, suppose that $G(Z)$ and $H(W)$ have radial, polar, and complex polar degrees $a,b,c, c'$ and $d,e,f, f'$, respectively.
                \begin{enumerate}
                \item  Defining $\cjt{X}$: We define a quotient space $\cjt{X}$ of $X$ by the following equivalences. A point $(Z,W) \sim (Z',W')$ if and only if:
                \begin{enumerate}
                    \item $Z = Z'$ and $H(W) = H(W')$, whose class is denoted by $[Z,*]$.
                    \item $W = W'$ and $G(Z) = G(Z')$, whose class is denoted by $[*,W]$.
                    \item $Z = Z'$ and $W = W'$, where $G(Z) \neq 0,1$, whose class is denote by $[Z,W]$.
                \end{enumerate}
                Points of type $(3)$ could be defined in terms of $H(W) \neq 0,1$, since $G(Z) + H(W) = 1$ for $(Z, W) \in X$. The topology of $X$ is the weakest topology such that the projections on each coordinate of $[Z,W]$ are continuous. Notice that this structure is coarser than the quotient one. 

                \item $\cjt{X}$ has the homotopy type of $X$: Let $\epsilon > 0$ and define
                \begin{align*}
                    N_{1,\epsilon} &= \{ (Z,W) \in X : \nm H(W) \nm \le \epsilon\}, \\
                    N_{2,\epsilon} &= \{ (Z,W) \in X : \nm G(Z) \nm \le \epsilon\}.
                \end{align*}
                Let $\rho(t) : \re \longrightarrow \re$ be a smooth function such that 
                \begin{enumerate}
                    \item[a)] $\rho(t)$ is monotone decreasing on $(\epsilon, 2\epsilon)$.
                    \item[b)] $\rho^{-1}(1) = (-\infty, \epsilon)$ and $\rho^{-1}(0) = [2\epsilon,\infty)$.
                \end{enumerate}
                Define $\mathcal{H} : X \times I \longrightarrow X$ by:
                \begin{enumerate}
                    \item[a)] For $(Z,W) \notin N_{1,2\epsilon} \cup N_{2,\epsilon}$, the homotopy is the identity.
                    \item[b)] For $(Z,W) \in N_{1,2\epsilon}$, $\mathcal{H}\left([Z,W],t\right) = (Z(t), W(t))$, where
                        \begin{align*}
                            Z(t) &= \left(\frac{r_{t}}{r_{0}}\right)^{\frac{1}{a}}\cdot e^{\iu\frac{\theta_{t}}{b}}\cdot e^{\ju\frac{\Theta_{t}}{c}}e^{\ju\frac{\overline{\Theta}_{t}}{c'}}, \\
                            W(t) &= \left( 1- t\rho\left(H(W)\right)\right)^{\frac{1}{d}},
                        \end{align*}
                        and
                        \begin{align*}
                            r(t) &= \nm 1 - H(W(t))\nm, \\
                            \theta_{t} &= \arg\nml \frac{1-H(W(t))}{1-H(W)}\nmr_{\iu}, \\
                            \Theta_{t} &= \arg_{\iu}\left( \frac{1-H(W(t))}{1-H(W)}\right).
                        \end{align*}
                    \item [c)] For points in $N_{2,2\epsilon}$, we have the analogous definitions interchange the roles of $G(Z)$ and $H(W)$.
                \end{enumerate}
                It follows that $\mathcal{H}$ is continuous and preserves $X$. To establish the homotopy between $X$ and $\cjt{X}$, a map $\mathcal{H}_{1} :\cjt{X} \longrightarrow X$ is defined as follows:
                \begin{align*}
                    \cjt{\mathcal{H}}_{1} \circ \pi = \mathcal{H}_{1}.
                \end{align*}
                By the properties of the classes in $\cjt{X}$ and the definition of $\mathcal{H}$, it is well-defined and continuous. 

                \item $\cjt{X}$ has the homotopy type of $R\cjt{X}$: where 
                \begin{align*}
                    R\cjt{X} = \{ [Z,W] \in \cjt{X} : G(Z) \in \re\}.
                \end{align*}
                Since $G(Z) + H(W) = 1$ we could define $R\cjt{X}$ in terms of $H(W) \in \re$. Let us introduce a notation. For $G(Z) = G_{1}(Z) + \ju G_{2}(Z)$, we denote
                \begin{align*}
                    \Re(G(Z)) &= \Re(G_{1}(Z)), \\
                    \Im(G(Z)) &= 1/\ju \left( G(Z) - \Re(G(Z))\right).
                \end{align*}
                Also, for $t \in [0,1]$, one has
                \begin{align*}
                    \mathbf{G}_{t}(Z) &= \Re(G(Z)) + \ju(1-t)\Im(G(Z)), \\
                     \mathbf{H}_{t}(W) &= \Re(H(W)) + \ju(1-t)\Im(H(W)).
                \end{align*}
                A deformation $\mathcal{F} : \cjt{X} \times I \longrightarrow \cjt{X}$ of $\cjt{X}$ to $R\cjt{X}$ is defined by $\mathcal{F}\left([Z,W],t\right) = [Z(t), W(t)]$ as:
                \begin{enumerate}
                    \item [a)] For points $[Z,*]$ or $[*,W]$, $\mathcal{F}$ is the identity.
                    \item [b)] For points $[Z,W]$, it is given by
                            \begin{align*}
                                Z(t) &= \left(\frac{r_{t}}{r_{0}}\right)^{\frac{1}{a}}\cdot e^{\iu\frac{\alpha_{t}}{b}}\cdot e^{\ju\frac{\beta_{t}}{c}}\cdot e^{\ju\frac{\overline{\beta}_{t}}{c'}}, \\
                                W(t) &= \left(\frac{s_{t}}{s_{0}}\right)^{\frac{1}{d}}\cdot e^{\iu\frac{\gamma_{t}}{e}}\cdot e^{\ju\frac{\lambda_{t}}{f}}\cdot e^{\ju\frac{\overline{\lambda}_{t}}{f'}},
                            \end{align*}
                            where
                            \begin{align*}
                                r_{t} &= \nm  \mathbf{G}_{t}(Z) \nm, \quad s_{t} = \nm \mathbf{H}_{t}(W) \nm, \\
                                \alpha_{t} &= \arg \nm \mathbf{G}_{t}(Z) / G(Z) \nm_{\iu}, \quad \gamma_{t} = \arg \nm \mathbf{H}_{t}(W) / H(W) \nm_{\iu}, \\
                                \beta_{t} &= \arg_{\iu} \left( \mathbf{G}_{t}(Z) / G(Z)\right), \quad \lambda_{t} = \arg_{\iu} \left( \mathbf{H}_{t}(W) / H(W) \right).
                            \end{align*}
                            It is clear that
                            \begin{align*}
                                G(Z(t)) = \mathbf{G}_{t}(Z) \quad \text{and} \quad H(W(t)) = \mathbf{H}_{t}(W),
                            \end{align*}
                            which shows that $\mathcal{F}$ preserves $R\cjt{X}$. Moreover, 
                            \begin{align*}
                                \mathcal{F}(Z(t), W(t)) &= G(Z(t)) + H(W(t)) \\
                                                        &= \mathbf{G}_{t}(Z) + \mathbf{H}_{t}(W) \\
                                                        &= \Re(F(Z,W)) + \ju(1-t)\Im(F(Z,W)) \\
                                                        & =1,
                            \end{align*}
                            this implies that the homotopy $\mathcal{F}$ belongs to $\cjt{X}$.
                \end{enumerate}
                            
                \item Conclusion: The space $R\cjt{X}$ is deformed to 
                            \begin{align*}
                                R^{+}\cjt{X} = \{ [Z,W] \in R\cjt{X}: G(Z), H(W) \ge 0\},
                            \end{align*}
                            by a homotopy that depends only on the radial homogeneity and thus it is defined in the same manner. This space is proved to be homeomorphic with $Y \ast T$ again by maps depending only on this property and the assertion follows. 
                \end{enumerate}
            \end{proof}

    For the next examples, we assume on $a_{i}, b_{i}, c_{i}, d_{i}$ the conditions in \eqref{cnd} and also that $a_{i}-b_{i}, c_{i} - d_{i} > 0$ for all $i=1, \dots, n$. See also \cite[\S 6]{Oka1973}.

    \begin{Exam}[Mixed Pham-Brieskorn]
        \normalfont
        Let $F(Z) = \sum_{i=1}^{n}Z_{i}(a_{i},b_{i},c_{i},d_{i})$ be a mixed Pham-Brieskorn polynomial. Then the fiber $F^{-1}(1)$, and consequently $F^{-1}(1)$, has the homotopy type of a bouquet of spheres $\bigvee_{m}\mathbb{S}^{n-1}$, where $m = \sigma_{1} \cdots \sigma_{n}$ with
        $$\sigma_{i} = (a_{i}-b_{i}+ c_{i} - d_{i})(a_{i}-b_{i}-c_{i}+d_{i})-1.$$
        Notice that this extends to bicomplex variables the same property for complex Pham-Brieskorn polynomials (see \cite{Pham1965} and \cite{Milnor1968}).
    \end{Exam}

    \begin{Exam}[Mixed Cyclic]
        \normalfont
        Let $F(Z) = Z_{1}(a_{1}, b_{1},c_{1},d_{1}) + Z_{2}(a_{2},b_{2},c_{2},d_{2})Z_{3}(a_{3},b_{3},c_{3},d_{3})$. Write $Z_{i} = z_{1i}\ebf + z_{2i}\ebft$, where $z_{1i}, z_{2i} \in \co^{*}$ and $i = 1,2,3$. Then the fiber of the second monomial is described by 
        \begin{equation*}
            \begin{cases}
                & z_{12}^{a_{2}}\bar{z}_{12}^{b_{2}}z_{22}^{c_{2}}\bar{z}_{22}^{d_{2}}z_{13}^{a_{3}}\bar{z}_{13}^{b_{3}}z_{23}^{c_{3}}\bar{z}_{23}^{d_{3}} = 1 \\
                & z_{12}^{c_{2}}\bar{z}_{12}^{d_{2}}z_{22}^{a_{2}}\bar{z}_{22}^{b_{2}}z_{13}^{c_{3}}\bar{z}_{13}^{d_{3}}z_{23}^{a_{3}}\bar{z}_{23}^{b_{3}} = 1
            \end{cases}
        \end{equation*}
        Set $m = m_{1}\cdot m_{2}$, where $m_{1} = \gcd(a_{2}-b_{2}-c_{2}+d_{2}, a_{3}-b_{3}-c_{3}+d_{3})$ and $m_{2} = \gcd(a_{2}-b_{2}+c_{2}-d_{2}, a_{3}-b_{3}+c_{3}-d_{3})$. Then
        \begin{align*}
            F^{-1}(1) \simeq \left\{\left(a_{1}-b_{1}+c_{1}-d_{1}\right)\left(a_{1}-b_{1}-c_{1}+d_{1}\right)\;\text{points}\right\} \ast \bigsqcup_{i=1}^{m}\tr.
        \end{align*}
    \end{Exam}

\section*{Acknowledgments} We thank Professor José Luis Cisneros-Molina and Professor José Antonio Arciniega Nevárez for useful conversations and for encouraging this work. The second named author was supported by the Coordenação de Aperfeiçoamento de Pessoal de Nível Superior - Brasil (CAPES) - Finance Code 001. The third named author was supported by DGAPA PAPIIT IN112424 Complex Kleinian groups and DGAPA PAPIIT IN117523 Singularidades de superficies complejas: modificaciones, resoluciones y curvas polares. The first named author was supported by UNAM Posdoctoral program (POSDOC).

\subsection*{Conflict of interest} The authors declare no conflict of interest.

\subsection*{Data Availability} Data availability is not applicable to this article.

    \nocite{*} 
    \bibliographystyle{siam}
    \bibliography{References} 

\end{document}